\def\OO{\mathcal{O}}
\def\oo{\mathbf{o}}
\def\revision#1{{\color{red}#1}}
\def\abs#1{|#1|}
\def\norm#1#2{\|#1\|_{#2}}
\def\set#1#2{\big\{#1:#2\big\}}
\def\prod#1#2{{\langle #1,#2\rangle}}
\def\heff{\boldsymbol{h}_{\operatorname{eff}}}
\def\heffPi{\boldsymbol{h}_{\operatorname{eff},\boldsymbol{\Pi}}}
\def\diam#1{\operatorname{diam} (#1)}
\def\diver{\operatorname{div}}
\def\ppi{\boldsymbol{\pi}}
\def\mid#1#2{#1^{#2 + 1/2}}
\def\midh#1#2{#1_h^{#2 + 1/2}}
\def\gentime#1#2{#1^{#2,\boldsymbol{\Theta}}}
\def\gentimeh#1#2{#1_h^{#2,\boldsymbol{\Theta}}}
\def\midh#1#2{#1_h^{#2 + 1/2}}
\def\dt#1#2{{\mathop{\mathrm{d}_t#1}}^{#2+1}}
\def\dth#1#2{ \mathop{\mathrm{d}_t} { #1_h^{#2+1} } }
\def\dtshort{\mathop{\mathrm{d}_t}}
\def\R{\mathbb{R}}
\def\0{\boldsymbol{0}}
\def\TT{\mathcal T}
\def\NN{\mathcal N}
\def\H{\boldsymbol{H}}
\def\P{\mathbb{P}}
\def\Cex{\ell_{\rm ex}^2}
\def\Cmesh{\kappa}
\def\hh{\boldsymbol{h}}
\def\mm{\boldsymbol{m}}
\def\nn{\boldsymbol{n}}
\def\uu{\boldsymbol{u}}
\def\vv{\boldsymbol{v}}
\def\ww{\boldsymbol{w}}
\def\BB{\boldsymbol{B}}
\def\ff{\boldsymbol{f}}
\def\vv{\boldsymbol{v}}
\def\ww{\boldsymbol{w}}
\def\xx{\boldsymbol{x}}
\def\zz{\boldsymbol{z}}
\def\JJ{\boldsymbol{\mathcal{J}}}
\def\RR{\boldsymbol{R}}
\def\zzeta{\boldsymbol{\zeta}}
\def\vvarphi{\boldsymbol{\varphi}}
\def\ppsi{\boldsymbol{\psi}}
\def\eeta{\boldsymbol{\eta}}
\def\nnu{\boldsymbol{\nu}}
\def\Ppi{\boldsymbol{\Pi}}
\def\Ttheta{\boldsymbol{\Theta}}
\def\II{\boldsymbol{\mathcal{I}}}
\def\L#1#2{L^#1 (#2) }
\def\LL#1#2{\boldsymbol{L}^#1 (#2) }
\def\H#1#2{H^#1 (#2) }
\def\HH#1#2{\boldsymbol{H}^#1 (#2) }
\def\Hcurl#1{\boldsymbol{H} \left( \operatorname{\bf curl} ;#1 \right)}
\def\cchi{\mathcal{X}}
\def\Kh#1{{\boldsymbol{\mathcal{K}}_h(#1)}}
\def\K#1{{\boldsymbol{\mathcal{K}}(#1)}}
\def\Vh{\boldsymbol{V}_{\!\!h}}
\def\Cinfty#1{\boldsymbol{C}^{\infty} \left(#1\right)}
\def\Int#1#2{\int_{#1}^{#2}}
\def\Sum#1#2{\sum_{#1}^{#2}}
\def\d#1{\mathop{\mathrm{d}#1}} 
\def\Mh{\boldsymbol{\mathcal{M}}_h}
\def\N{\mathbb{N}}
\def\R{\mathbb{R}}
\def\lr#1{(#1)}
\def\ltr{\!\left(t\right)}
\def\l0r{\!\left(0\right)}
\def\Trian{\mathcal{T}_h}
\def\weight{W}
\def\Cpi{C_{\mathrm{\bf \ppi}}}
\def\CPi{C_{\mathrm{\bf \Ppi}}}
\def\Cmh0{{\tilde{C}}_{{0}}}
\def\pih#1#2#3#4{\boldsymbol{\pi}_h^{#1}(#2;#3,#4)}
\def\Pih#1#2#3#4{\boldsymbol{\Pi}_h^{#1}(#2;#3,#4)}
\def\DPi#1#2{\boldsymbol{D}(#1,#2)}
\def\DPishort{\boldsymbol{D}}
\def\DPih#1#2{\boldsymbol{D}_h(#1,#2)}
\newcounter{statement}
\newenvironment{statement}[2][!]{%
\vskip3mm
\hrule
\hrule
\hrule
\vskip1mm
\noindent%
\refstepcounter{statement}%
\bf#2~\thestatement%
\ifthenelse{\equal{#1}{!}}{.\ }{~(#1).\ }%
\it%
}{%
\vskip1mm
\hrule
\hrule
\hrule
\vskip2mm
}
\newenvironment{theorem}[1][!]{\begin{statement}[#1]{Theorem}}{\end{statement}}
\newenvironment{lemma}[1][!]{\begin{statement}[#1]{Lemma}}{\end{statement}}
\newenvironment{definition}[1][!]{\begin{statement}[#1]{Definition}}{\end{statement}}
\newenvironment{proposition}[1][!]{\begin{statement}[#1]{Proposition}}{\end{statement}}
\newenvironment{remark}[1][!]{\begin{statement}[#1]{Remark}}{\end{statement}}
\newenvironment{algorithm}[1][!]{\begin{statement}[#1]{Algorithm}}{\end{statement}}
\def\subsection#1{%
 \vskip2mm
 \refstepcounter{subsection}%
 {\bf\arabic{section}.\arabic{subsection}.~#1.~}%
}
\pgfplotsset{compat=1.9}
\title{Linear second-order IMEX-type integrator for the (eddy current) Landau--Lifshitz--Gilbert equation}
\author{Giovanni Di Fratta, Carl-Martin Pfeiler,
Dirk Praetorius, \\
Michele Ruggeri,
Bernhard Stiftner}
\address{TU Wien, Institute for Analysis and Scientific Computing, Wiedner Hauptstr. 8-10/E101/4, 1040 Vienna, Austria}
\email{giovanni.difratta@asc.tuwien.ac.at}
\email{carl-martin.pfeiler@asc.tuwien.ac.at}
\email{dirk.praetorius@asc.tuwien.ac.at}
\email{michele.ruggeri@asc.tuwien.ac.at}
\email{bernhard.stiftner@asc.tuwien.ac.at \quad \rm (corresponding author)}
\keywords{micromagnetism, finite elements, linear second-order time integration,
implicit-explicit time-marching scheme, unconditional convergence.}
\subjclass[2010]{35K55, 65M12, 65M60, 65Z05}
\thanks{{\bf Acknowledgement.} The authors thankfully acknowledge support by the Austrian Science Fund (FWF) through the doctoral school \emph{Dissipation and dispersion in nonlinear PDEs} (grant W1245) and the special research program \emph{Taming complexity in partial differential systems} (grant SFB F65) as well as the Vienna Science and Technology Fund (WWTF) through the project \emph{Thermally controlled magnetization dynamics} (grant MA14-44).}
\def\revision#1{{\color{red}#1}}
\begin{document}


\begin{abstract}
Combining ideas from [Alouges et al.\ (Numer.\ Math., 128, 2014)] and [Praetorius et al.\ (Comput. Math. Appl., 2017)], we propose a numerical algorithm for the integration of the nonlinear and time-dependent Landau--Lifshitz--Gilbert (LLG) equation which is unconditionally convergent, formally (almost) second-order in time, and requires only the solution of one linear system per time-step.
Only the exchange contribution is integrated implicitly in time, while the lower-order contributions like the computationally expensive stray field are treated explicitly in time.
Then, we extend the scheme to the coupled system of the Landau--Lifshitz--Gilbert equation with the eddy current approximation of Maxwell equations (ELLG).
Unlike existing schemes for this system, the new integrator is unconditionally convergent, (almost) second-order in time, and requires only the solution of two linear systems per time-step.
\end{abstract}


\maketitle
\thispagestyle{fancy}

\section{Introduction}
\subsection{State of the art}
Nowadays, the study of magnetization processes in magnetic materials and the development of fast and reliable tools to perform large-scale micromagnetic simulations are the focus of considerable research as they play a fundamental role in the design of many technological devices.
Applications to magnetic recording, in which the external field can change fast so that the hysteresis properties are not accurately described by a static approach, require accurate numerical methods to study the dynamics of the magnetization distribution.
In this context, a well-established model to describe the time evolution of the magnetization in ferromagnetic materials is the Landau--Lifshitz--Gilbert equation (LLG) introduced in~\cite{ll1935,gilbert1955}.

Recently, the numerical integration of LLG has been the subject of many mathematical studies; see, e.g., the review articles~\cite{kp2006,garciaCervera2007,cimrak2008b}, the monograph~\cite{prohl2001} and the references therein.
The main challenges concern the strong nonlinearity of the problem, a nonconvex unit-length constraint which enforces the solutions to take their pointwise values in the unit sphere, an intrinsic energy law, which combines conservative and dissipative effects and should be preserved by the numerical scheme, as well as the presence of nonlocal field contributions, which prescribe the (possibly nonlinear) coupling with other partial differential equations (PDEs).
One important aspect of the research is related to the development of unconditionally convergent methods, for which the numerical analysis does not require to impose any CFL-type condition on the relation of the spatial and temporal discretization parameters.

The seminal works~\cite{bp2006,alouges2008a} propose numerical time-marching schemes, based on lowest-order finite elements in space, which are proven to be unconditionally convergent towards a weak solution of LLG in the sense of~\cite{as1992}.
The implicit midpoint scheme of~\cite{bp2006} is formally of second order in time. It inherently preserves some of the fundamental properties of LLG, such as the pointwise constraint (at the nodes of the mesh) and the energy law. However, it requires the solution of a nonlinear system of equations per time-step.
The tangent plane scheme of~\cite{alouges2008a} is based on an equivalent reformulation of LLG, which gives rise to a variational formulation in the tangent space of the current magnetization state. It requires only the solution of one linear system per time-step and employs the nodal projection at each time-step to enforce the pointwise constraint at a discrete level. 
For an implicit-explicit approach for the full effective field, we refer to~\cite{akt2012,bffgpprs2014}.
Moreover, extensions for the discretization of the coupling of LLG with the full Maxwell equations, the eddy current equation, a balance law for the linear momentum (magnetostriction), and a spin diffusion equation for the spin accumulation have been considered in~\cite{lt2013,bppr2013,ahpprs2014,lppt2015,bpp2015,ft2017}. Formally, the tangent plane scheme and its aforementioned extensions are of first order in time. The
 nodal projection step 
can be omitted and this leads to an additional consistency error which is also (formally) first-order in time~\cite{ahpprs2014}. For this projection-free variant, the recent work~\cite{ft2017} derives rigorous \emph{a~priori} error estimates which are of first order in time and space.

A tangent plane scheme with an improved convergence order in time is introduced and analyzed in~\cite{akst2014}.
Like~\cite{alouges2008a}, the proposed method is based on a predictor-corrector approach which combines a linear reformulation of the equation with the use of the nodal projection for the numerical treatment of the pointwise constraint. However, the variational formulation for the linear update is designed in such a way that the scheme has a consistency error of order $2-\varepsilon$ for any $0<\varepsilon<1$.
For this reason, this method is named \emph{(almost) second-order tangent plane scheme} in~\cite{akst2014}.

\subsection{Contributions of the present work}
In this work, 
we propose a threefold extension of the improved tangent plane scheme from~\cite{akst2014}:

{\large$\bullet$}
During the design and the implementation of a micromagnetic code, one of the main issues concerns the computation of the nonlocal magnetostatic interactions. In many situations, it turns out to be the most time-consuming part of micromagnetic simulations~\cite{aesds2013}.
To cope with this problem, we follow the approach of~\cite{prs2016} and propose an implicit-explicit treatment for the lower-order effective field contributions. Then, only one expensive stray field computation per time-step needs to be carried out.
Nevertheless, our time-stepping preserves the (almost) second-order convergence in time of the scheme as well as the unconditional convergence result.

{\large$\bullet$}
The discovery of the giant magnetoresistance (GMR) effect in~\cite{bbfvnpecfc1988,bgsz1989} determined a breakthrough in magnetic hard disk storage capacity and encouraged several extensions of the micromagnetic model, which aim to describe the effect of spin-polarized currents on magnetic materials.
The most used approaches involve extended forms of LLG, in which the classical energy-based effective field is augmented by additional terms in order to take into account the spin transfer torque effect; see~\cite{slonczewski1996,zl2004,tnms2005}.
In this work, we extend the abstract setting, the proposed algorithm, and the convergence result so that the aforementioned extended forms of LLG are covered by our analysis.

{\large$\bullet$}
For the treatment of systems in which LLG is bidirectionally coupled with another time-dependent PDE, the works~\cite{bppr2013,ahpprs2014,lppt2015,bpp2015} propose integrators which completely decouple the time integration of LLG and the coupled equation.
This treatment is very attractive in terms of computational cost and applicability of the scheme, since existing implementations, including solvers and preconditioning strategies for the building blocks of the system, can easily be reused.
We show how such an approach can also be adopted for 
the improved tangent plane scheme. Combined with a second-order method for the coupled equation, this leads to algorithms of global (almost) second order, for which the convergence result can be generalized. 
As an illustrative example, we analyze the coupling of LLG with 
eddy currents (ELLG), which is of relevant interest in several concrete applications~\cite{bmms2002,scmw2004,hseskdf2005}.

\subsection{Outline}
The remainder of this work is organized as follows:
We conclude this section by recalling the notation used throughout the paper.
In Section~\ref{section:llg} and in Section~\ref{section:ellg}, we present the problem formulation and introduce the numerical algorithm. Then, we state the convergence result for pure LLG and ELLG, respectively.
In Section~\ref{section:derivation}, for the convenience of the reader, we reformulate the argument of~\cite{akst2014} and propose a formal derivation of the (almost) second-order algorithm.
In Section~\ref{section:proof:maintheorem} and Section~\ref{section:proof:maintheorem_ellg}, we prove the main results for pure LLG (Theorem~\ref{thm:maintheorem}) and ELLG (Theorem~\ref{thm:maintheorem_ellg}), respectively.
Finally, Section~\ref{section:numerics} is devoted to numerical experiments.
\subsection{General notation}
Throughout this work, we use the standard notations for Lebesgue, Sobolev, and Bochner spaces and norms.
For any domain $U$, we denote the scalar product in $L^2(U)$ by $\prod{\cdot}{\cdot}_U$ and the corresponding norm by $\norm{\cdot}{U}$.
Vector-valued functions (as well as the corresponding function spaces) are indicated by bold letters.
To abbreviate notation in proofs, we write $A \lesssim B$ when $A \le c B$ for some generic constant $c > 0$, which is clear from the context and always independent of the discretization parameters.
Moreover, $A \simeq B$ abbreviates $A \lesssim B \lesssim A$.
\section{Landau--Lifshitz--Gilbert equation} 
\label{section:llg}

\subsection{LLG and weak solutions}%
\label{subsection:model_LLG}
For a bounded Lipschitz domain $\omega\subset\R^3$, the Gilbert form of LLG reads
\begin{subequations}\label{eq:llg}
\begin{align}
\label{eq:llg1} \partial_t \mm &= -\mm\times \big( \heff(\mm) \, + \, \Ppi(\mm) \big) 
+ \alpha\,\mm\times\partial_t \mm &&\text{ in } \omega_T := (0,T)\times\omega \,, \\
\label{eq:llg2} \partial_{\nn}\mm &= \0 &&\text{ on } (0,T)\times\partial\omega \,, \\
\label{eq:llg3} \mm(0) &= \mm^0 &&\text{ in } \omega,
\end{align}
where $T>0$ is the final time, $\alpha \in (0,1]$ is the Gilbert damping constant, and $\mm^0\in\HH{1}{\omega}$ is the initial data with $|\mm^0|=1$ a.e.\ in $\omega$. The so-called effective field reads
\begin{align}\label{eq:heffdef}
\heff(\mm) = \Cex\Delta\mm + \boldsymbol{\pi}(\mm) + \ff,
\end{align}
\end{subequations}
where $\ell_{\rm ex}>0$ is the exchange length, while the linear, self-adjoint, and bounded operator $\boldsymbol{\pi}: \LL{2}{\omega} \rightarrow \LL{2}{\omega}$ collects the lower-order terms such as stray field and uniaxial anisotropy, and $\ff \in C^1([0,T],\LL{2}{\omega})$ is the applied field. We note that
\begin{align}\label{eq:energy_LLG}
 \heff(\mm) = -\frac{\delta \mathcal{E}_{\textrm{LLG}}(\mm)}{\delta \mm},
 \text{ where }~
 \mathcal{E}_{\textrm{LLG}}(\mm) 
 = \frac{\Cex}{2} \norm{\nabla \mm}{\omega}^2 
 \!-\! \frac12 \prod{\boldsymbol{\pi}(\mm)}{\mm}_{\omega} 
 \!-\! \prod{\ff}{\mm}_{\omega}
\end{align}
is the micromagnetic energy functional.
Further dissipative (e.g., spintronic) effects such as the Slonczewski contribution~\cite{slonczewski1996} or the Zhang--Li contribution~\cite{zl2004,tnms2005} are collected in the (not necessarily linear) operator $\boldsymbol{\Pi}: \HH{1}{\omega} \cap \LL{\infty}{\omega} \rightarrow \LL{2}{\omega}$. 

Note that solutions of LLG~\eqref{eq:llg} formally satisfy $0 = \partial_t \mm\cdot\mm = \frac12\,\partial_t|\mm|^2$ and hence $|\mm|=1$ a.e.\ in $\omega_T$. As in~\cite{as1992}, we use the following notion of weak solutions of~\eqref{eq:llg}:

\begin{definition}\label{def::weaksolution}
With the foregoing notation, a function $\mm:\omega_T\to\R^3$ is called a weak solution of LLG~\eqref{eq:llg}, if the following properties \ref{item:weak_llg1}--\ref{item:weak_llg4} are satisfied:
\begin{enumerate}[label=\rm{(\roman*)}]
\item\label{item:weak_llg1} $\mm\in\HH{1}{\omega_T} \, \cap \, \L{\infty}{0,T;\HH{1}{\omega}}$ with $|\mm|=1$ a.e.\ in $\omega_T$;
\item\label{item:weak_llg3} $\mm(0)=\mm^0$ in the sense of traces;
\item\label{item:weak_llg2} for all $\vvarphi\in\HH{1}{\omega_T}$, it holds that
\begin{align}\label{eq:variational_llg}
 \int_0^T\prod{\partial_t \mm}{\vvarphi}_{\omega}\,\d{t} 
 &= \Cex\int_0^T\prod{\mm\times\nabla\mm}{\nabla\vvarphi}_{\omega}\,\d{t} 
 - \int_0^T\prod{\mm\times\ff}{\vvarphi}_{\omega}\,\d{t} 
 \notag \\ &\quad 
 - \int_0^T\prod{\mm\times\ppi(\mm)}{\vvarphi}_{\omega}\,\d{t} 
 - \int_0^T\prod{\mm\times\Ppi(\mm)}{\vvarphi}_{\omega}\,\d{t}  
 \notag \\ &\quad 
 + \alpha \, \int_0^T\prod{\mm\times\partial_t \mm}{\vvarphi}_{\omega}\,\d{t};
\end{align}
\item\label{item:weak_llg4} for almost all $\tau \in (0,T)$, it holds that
\begin{align}\label{eq:strongerenergyestimate_LLG}
 \!\!\! \mathcal{E}_{\rm{LLG}}(\mm(\tau)) 
 + \alpha\!\int_0^{\tau} \! \norm{\partial_t \mm}{\omega}^2\,\d{t}
 +\! \int_0^{\tau} \!\! \prod{\partial_t \ff}{\mm}_{\omega} \d{t} 
 -\! \int_0^{\tau} \!\! \prod{\Ppi(\mm)}{\partial_t \mm}_{\omega} \d{t} 
\le \mathcal{E}_{\rm{LLG}}(\mm^0).
\end{align} 
\end{enumerate}
\end{definition}

\subsection{Time discretization}%
\label{section:discretization:time}
Let $N \in \N$ and $k:= T/N$. Consider the uniform time-steps $t_i := ik$ for $i=0,\dots,N$ and the corresponding midpoints $t_{i+1/2} := (t_{i+1}+t_{i})/2$. For a Banach space $\BB$, e.g., $\BB \in \{ \LL{2}{\omega}, \HH{1}{\omega} \}$, and a sequence $\left( \vvarphi^i \right)_{i=-1}^N \in \BB$, define the mean value and the discrete time derivative by
\begin{subequations}
\begin{align}\label{eq:discreteobjects}
 \mid{\vvarphi}{i} := \frac{\vvarphi^{i+1} + \vvarphi^i}{2} 
 \quad \textrm{and} \quad 
 \dt{\vvarphi}{i} := \frac{\vvarphi^{i+1} - \vvarphi^i}{k} 
 \quad \textrm{for} \quad i = 0,\dots,N-1.
\end{align}
For $\Ttheta^k \in \R^{N \times 3}$, define the two-step approach by
\begin{align}\label{eq:gen_timestepping}
 \gentime{\vvarphi}{i} 
 := \Ttheta_{i3}^k \vvarphi^{i+1} + \Ttheta_{i2}^k \vvarphi^{i} + \Ttheta_{i1}^k   \vvarphi^{i-1}
 \quad \textrm{for} \quad i = 0,\dots,N-1.
\end{align}
\end{subequations}
For $i=0,\dots,N-1$ and $t \in [t_i,t_{i+1})$, define
\begin{align}\label{eq:discretefunctions}
\begin{split}
 & \vvarphi_{k}^{=} \ltr := \vvarphi^{i-1}, \quad
 \vvarphi_{k}^-\ltr := \vvarphi^i, \quad 
 \vvarphi_{k}^+\ltr := \vvarphi^{i+1}, \quad 
 \overline{\vvarphi}_{k}\ltr := \mid{\vvarphi}{i}, \quad 
  \\
 &\vvarphi_{k}^{\Ttheta} \ltr := \gentime{\vvarphi}{i}, 
 \quad \textrm{and} \quad
 \vvarphi_{k}\ltr := \vvarphi^{i+1} \frac{t-t_i}{t_{i+1}-t_i} + \vvarphi^i \frac{t_{i+1}-t}{t_{i+1}-t_i}.
\end{split}
\end{align}
Note that $\vvarphi_{k}^{=}, \vvarphi_{k}^{-}, \vvarphi_{k}^{+}, \overline{\vvarphi}_{k}, \vvarphi_{k}^{\Ttheta} \in \L{2}{0,T;\BB}$ and $\vvarphi_k \in \H{1}{0,T;\BB}$ with $\partial_t \vvarphi_k (t) = \dt{\vvarphi}{i}$ for $t \in (t_i,t_{i+1})$.

\subsection{Space discretization}%
\label{subsection:spacediscretization}
Let $\Trian$ be a conforming triangulation of $\omega$ into compact tetrahedra $K \in \Trian$
which is $\Cmesh$-quasi-uniform, i.e., the global mesh-size $h>0$ assures that
\begin{align}\label{eq:quasiuniform}
\Cmesh^{-1} \, h \leq \abs{ K }^{1/3} \leq \diam{K} \leq \Cmesh \, h \quad \textrm{for all } K \in \Trian.
\end{align}
We suppose that $\Trian$ is weakly acute, i.e., the dihedral angles of all elements are $\leq \pi / 2$. Define the space of $\Trian$-piecewise affine and globally continuous functions by
\begin{align*}
 \Vh := (V_h)^3 \subset \HH{1}{\omega}
 \quad \text{with} \quad
 V_h := \{ v_h \in C \lr{\omega} : v_h|_{K} \in \mathcal{P}^1 \lr{K} \textrm{ for all } K \in \Trian \}. 
\end{align*}
Let $\NN_h$ be the set of nodes of $\Trian$. Recall that $|\mm|=1$ and $\mm \cdot\partial_t \mm =0$. 
To mimic the latter properties, we define the set of discrete admissible magnetizations on $\omega$ by
\begin{align}
\Mh := \set{\ppsi_h\in\Vh}{|\ppsi_h(\zz_h)|=1\text{ for all } \zz_h\in\NN_h}
\end{align}
as well as the discrete tangent space
\begin{align}\label{eq::defKpsih}
\Kh{\ppsi_h} := \set{\vvarphi_h\in\Vh}{\ppsi_h(\zz_h)\cdot\vvarphi_h(\zz_h)=0\text{ for all } \zz_h\in\NN_h} \,
\quad
\text{for all } \ppsi_h\in\Mh.
\end{align}

\subsection{Almost second-order tangent plane scheme}%
\label{section:llg:algorithm}
In this section, we formulate our numerical integrator which is analyzed below.
For each time-step $t_i$, 
it approximates $\mm(t_i)$ by $\mm_h^i \in \Mh$ and $\partial_t \mm(t_i)$ by $\vv_h^i \in \Kh{\mm_h^i}$. For a practical formulation, suppose (computable) operators  
\begin{align*}
 \ppi_h,\Ppi_h : \Mh \to \LL{2}{\omega}
 \quad\text{and}\quad
 \ppi_h^i, \Ppi_h^i : \Vh \times \Mh \times \Mh \rightarrow \LL{2}{\omega}
\end{align*}
such that $\ppi_h(\mm_h^i) \approx \ppi(\mm_h^i)$ and $\Ppi_h(\mm_h^i) \approx \Ppi(\mm_h^i)$ as well as
$\ppi_h^i(\vv_h^i;\mm_h^i,\mm_h^{i-1}) \approx \ppi(\mm_h^{i+1/2})$ 
and $\Ppi_h^i(\vv_h^i;\mm_h^i,\mm_h^{i-1}) \approx \Ppi(\mm_h^{i+1/2})$. We suppose that the operators $\ppi_h^i$ and $\Ppi_h^i$ are affine in $\vv_h^i$ (cf.~Remark~\ref{remark:algorithm:llg}).
We extend the construction in~\eqref{eq:discretefunctions} to the operator sequences $(\ppi_h^i)_{i=0}^{N-1}$ and $(\Ppi_h^i)_{i=0}^{N-1}$ via 
\begin{align}\label{eq:discreteoperator}
\boldsymbol{\pi}_{hk}^-(t) := \boldsymbol{\pi}_{h}^i 
\quad \textrm{and} \quad
\boldsymbol{\Pi}_{hk}^-(t) := \boldsymbol{\Pi}_{h}^i
\quad \textrm{for all } t \in [t_i,t_{i+1}).
\end{align}
Finally, we require two stabilizations $M: \R_{>0} \to \R_{>0}$ and $\rho: \R_{>0} \to \R_{>0}$ such that
\begin{subequations}\label{eq:rhoM}
\begin{align}\label{eq:assumption:M}
 \lim_{k\to0}M(k) = \infty 
 \quad &\text{with} \quad
 \lim_{k\to0}M(k)k = 0,
 \\ \label{eq:assumption:rho}
 \lim_{k\to0} \rho(k) = 0
 \quad &\text{with} \quad
 \lim_{k\to0} k^{-1}\rho(k) = \infty.
\end{align}
The canonical choices are 
\begin{align}\label{eq:assumption:choice}
 \rho(k) := |k\,\log k|
 \quad\text{and}\quad
  M(k) := |k\,\log k|^{-1}, 
\end{align}
\end{subequations}
We proceed as in \cite[p.415]{akst2014} and define the weight function
\begin{align}\label{eq:weightingtilde}
\weight_{M(k)}(s) :=
\begin{cases}
\displaystyle\alpha + \frac{k}{2} \, \min\{s,M(k)\} \quad&\text{for } s\ge 0 \,,\\
\displaystyle\frac{\alpha}{1+\frac{k}{2\alpha}\min\{-s,M(k)\}} \quad&\text{for } s<0 \,.
\end{cases}
\end{align}
With these preparations, our numerical integrator reads as follows:

\begin{algorithm}[Implicit-explicit tangent plane scheme]\label{alg:abtps}
{\bf Input:} Approximation $\mm_h^{-1} := \mm_h^0\in\Mh$ of initial magnetization.\\
{\bf Loop:} For $i=0,\dots,N-1$, iterate the following steps {\rm(a)}--{\rm(c)}:\\
{\rm (a)} Compute the discrete function
\begin{align}
\lambda_h^i := - \Cex \, |\nabla \mm_h^i |^2 + \big( \ff(t_i) + \ppi_h(\mm_h^i) + \Ppi_h(\mm_h^i) \big) \cdot \mm_h^i.
\end{align}
{\rm(b)} Find $\vv_h^{i}\in\Kh{\mm_h^i}$ such that, for all $\vvarphi_h\in\Kh{\mm_h^i}$, it holds that
\begin{align}\label{eq:tps2_llg_variational}
\begin{split}
&\prod{\weight_{M(k)}(\lambda_h^i) \, \vv_h^i}{\vvarphi_h}_{\omega} + \prod{\mm_h^i\times\vv_h^i}{\vvarphi_h}_{\omega} + 
\frac{\Cex}{2} \, k \, (1+\rho(k)) \, \prod{\nabla\vv_h^i}{\nabla\vvarphi_h}_{\omega} 
\\
&= - \Cex \, \prod{\nabla\mm_h^i}{\nabla\vvarphi_h}_{\omega}
 + \prod{\ff(t_{i+1/2})}{\vvarphi_h}_{\omega}
 + \prod{\pih{i}{\vv_h^i}{\mm_h^i}{\mm_h^{i-1}}}{\vvarphi_{h}}_{\omega}
\\& \qquad 
 + \prod{\Pih{i}{\vv_h^i}{\mm_h^i}{\mm_h^{i-1}}}{\vvarphi_h}_{\omega}  \,.
\end{split}
\end{align}
{\rm(c)} Define $\mm_h^{i+1}\in\Mh$ by 
\begin{align}\label{alg:abtpsupdate}
\mm_h^{i+1}(\zz_h) := \frac{\mm_h^i(\zz_h)+k\vv_h^i(\zz_h)}{|\mm_h^i(\zz_h)+k\vv_h^i(\zz_h)|} \quad\text{ for all } \zz_h\in\NN_h.
\end{align}
{\bf Output:} Approximations $\mm_h^i\approx\mm(t_i)$ for all $i=0,\dots,N$.
\qed
\end{algorithm}

\begin{remark}\label{remark:algorithm:llg}
{\rm(i)} The variational formulation~\eqref{eq:tps2_llg_variational} gives rise to a linear system for $\vv_h^i$. 
However, in particular for stray field computations, the part of the resulting system matrix which corresponds to the $\ppi_h^i$-contribution may be fully populated (resp.\ not explicitly available for hybrid FEM-BEM methods~\cite{fk1990}). To deal with this issue, we can, on the one hand, employ the fixpoint iteration~\eqref{eq:fixedpoint_llg_variational} and, on the other hand, choose $\ppi_h^i$ and $\Ppi_h^i$ independent of $\vv_h^i$. For the latter, see {\rm (ii)} below for a choice that does not impair the convergence order.

{\rm (ii)} Natural choices for the operators $\ppi_h^i$ and $\Ppi_h^i$ are the following: The chain rule gives rise to an operator $\DPi{\mm}{\partial_t \mm} := \partial_t[\Ppi(\mm)]$ which is linear in the second argument. Suppose that $\boldsymbol{D}_h$ is a discretization of $\boldsymbol{D}$.
For $i=0$, we follow~\cite[Algorithm 2]{akst2014} and choose 
\begin{subequations}\label{eq:llg_choice1}
\begin{align}\label{eq:llg_choice1a}
\pih{i}{\vv_h^i}{\mm_h^i}{\mm_h^{i-1}} &:= \ppi_h(\mm_h^i) + 
\frac{k}{2} \ppi_h(\vv_h^{i}),\\\label{eq:llg_choice1b}
\Pih{i}{\vv_h^i}{\mm_h^i}{\mm_h^{i-1}} &:= \Ppi_h(\mm_h^i) + 
\frac{k}{2} \DPih{\mm_h^i}{\vv_h^i};
\end{align}
\end{subequations}
cf.\ Proposition~\ref{proposition:variational}{\rm (i)}.
For the subsequent time-steps $i\geq1$ and unlike~\cite{akst2014}, we set 
\begin{subequations}\label{eq:llg_choice2}
\begin{align}\label{eq:llg_choice2a}
\pih{i}{\vv_h^i}{\mm_h^i}{\mm_h^{i-1}} &:= \frac{3}{2} \ppi_h(\mm_h^i) - 
\frac{1}{2} \ppi_h(\mm_h^{i-1}),
\\\label{eq:llg_choice2b}
\Pih{i}{\vv_h^i}{\mm_h^i}{\mm_h^{i-1}} &:= \Ppi_h(\mm_h^i) + 
\frac{1}{2} \DPih{\mm_h^i}{\mm_h^i} - \frac{1}{2} \DPih{\mm_h^i}{\mm_h^{i-1}};
\end{align}
\end{subequations}
cf.\ Proposition~\ref{proposition:variational}{\rm (ii)}.
In this way, the right-hand side of~\eqref{eq:tps2_llg_variational} is independent of $\vv_h^i$.

{\rm(iii)} The choice of $\rho$ in~\eqref{eq:assumption:choice} leads to formally almost second-order in time convergence of Algorithm~\ref{alg:abtps}. In principle, it suffices to choose a sufficiently large constant $M(k)\equiv M > 0$; for details see Proposition~\ref{proposition:variational}.

{\rm(iv)} Theorem~\ref{thm:maintheorem} states well-posedness of Algorithm~\ref{alg:abtps} as well as unconditional convergence towards a weak solution of LLG in the sense of Definition~\ref{def::weaksolution}. Moreover, the proof of Theorem~\ref{thm:maintheorem}{\rm (i)} provides a convergent fixpoint solver for the first time-step $i=0$.

{\rm (v)} Unlike the stabilized scheme, we note that the non-stabilized scheme $\rho(k) \equiv 0$ requires a CFL-type coupling $k = \oo(h)$ 
for the convergence proof; see also Remark~\ref{remark:CFL}.
\end{remark}

\subsection{Main theorem for LLG integration}%
\label{section:maintheorem:llg}
To formulate the main result which generalizes~\cite[Theorem 2]{akst2014}, we require the following assumptions:

{\large$\bullet$}
Let $\TT_h$ satisfy the assumptions of Section~\ref{subsection:spacediscretization}. 

{\large$\bullet$}
Let $\mm_h^0\in\Mh$ satisfy
\begin{align}\label{eq:initconvergence_m0}
\mm_h^0 \rightharpoonup \mm^0 \quad \textrm{in } \HH{1}{\omega} \quad\text{as } h \rightarrow 0.
\end{align}

{\large$\bullet$}
For all $i \in \{0,\dots,N-1\}$, let $\ppi_h^i$ satisfy, for all $\ppsi_h, \widetilde{\ppsi}_h \in \Vh$ and all $\vvarphi_{h},\widetilde{\vvarphi}_{h} \in \Mh$, 
\begin{subequations}\label{eq:assumptions:pi}
the Lipschitz-type continuity
\begin{align}\label{eq:pihlipschitz}
 \norm{\pih{i}{\ppsi_h}{\vvarphi_h}{\widetilde{\vvarphi}_h} - \pih{i}{\widetilde{\ppsi}_h}{\vvarphi_h}{\widetilde{\vvarphi}_h}}{\omega} 
 \le \Cpi \, k \, \norm{\ppsi_h\!-\!\widetilde{\ppsi}_h}{\omega}
\end{align}
and the stability-type estimate
\begin{align}\label{eq:pihstability}
 \norm{\pih{i}{\ppsi_h}{\vvarphi_h}{\widetilde{\vvarphi}_h}}{\omega} 
\le \Cpi \, \big( k \, \norm{\ppsi_h}{\omega} + \norm{\vvarphi_h}{\omega} + \norm{\widetilde{\vvarphi}_h}{\omega} \big). \end{align}
Moreover, for all sequences $\ppsi_{hk}$ in $\L{2}{0,T;\Vh}$ and $\vvarphi_{hk}$, $\widetilde{\vvarphi}_{hk}$ in $\L{2}{0,T;\Mh}$ with $\ppsi_{hk} \rightharpoonup \ppsi$ and $\vvarphi_{hk}, \widetilde{\vvarphi}_{hk} \rightarrow \vvarphi$ in $\LL{2}{\omega_T}$, we suppose consistency
\begin{align}\label{eq:pihkconvergence}
 \ppi_{hk}^-(\ppsi_{hk};\vvarphi_{hk},\widetilde{\vvarphi}_{hk}) 
 \rightharpoonup \ppi(\vvarphi) 
 \quad \textrm{in } \LL{2}{\omega_{T}} \quad\text{as } h,k \rightarrow 0.
\end{align}
\end{subequations}

{\large$\bullet$}
For all $i \in \{0,\dots,N-1\}$, let $\Ppi_h^i$ satisfy, for all $\ppsi_h, \widetilde{\ppsi}_h \in \Vh$ and all $\vvarphi_{h},\widetilde{\vvarphi}_{h} \in \Mh$, 
\begin{subequations}\label{eq:assumptions:Pi}
the Lipschitz-type continuity
\begin{align}\label{eq:Pihlipschitz}
 \norm{\Pih{i}{\ppsi_h}{\vvarphi_h}{\widetilde{\vvarphi}_h} 
 - \Pih{i}{\widetilde{\ppsi}_h}{\vvarphi_h}{\widetilde{\vvarphi}_h}}{\omega} 
 \le \CPi \, k \, \norm{\ppsi_h-\widetilde{\ppsi}_h}{\HH{1}{\omega}}
\end{align}
and the stability-type estimate
\begin{align}\label{eq:Pihstability}
 \prod{\Pih{i}{\ppsi_h}{\vvarphi_h}{\widetilde{\vvarphi}_h}}{ \ppsi_h }_{\omega} 
 \le \CPi \, \norm{\ppsi_h}{\omega} \big( k \, \norm{\ppsi_h}{\HH{1}{\omega}} +
 \norm{\vvarphi_h}{\HH{1}{\omega}} 
+  \norm{\widetilde{\vvarphi}_h}{\HH{1}{\omega}} \big).
\end{align}
Moreover, for all sequences $\ppsi_{hk}$ in $\L{2}{0,T;\Vh}$ and $\vvarphi_{hk}$, $\widetilde{\vvarphi}_{hk}$ in $\L{2}{0,T;\Mh}$ with $\ppsi_{hk} \rightharpoonup \ppsi$ and $\vvarphi_{hk}, \widetilde{\vvarphi}_{hk} \rightarrow \vvarphi$ in $\LL{2}{\omega_T}$, we suppose consistency
\begin{align}
\label{eq:Pihkconvergence}
 \Ppi_{hk}^-(\ppsi_{hk};\vvarphi_{hk},\widetilde{\vvarphi}_{hk}) & \rightharpoonup \Ppi(\vvarphi)
 \quad \textrm{in } \LL{2}{\omega_T} \quad \text{as } h,k \rightarrow 0.
\end{align}
\end{subequations}

\begin{theorem}\label{thm:maintheorem}
{\rm(i)} If the Lipschitz-type estimates~\eqref{eq:pihlipschitz} and~\eqref{eq:Pihlipschitz} are satisfied, then there exists $k_0>0$ such that for all $0<k<k_0$ the discrete variational formulation~\eqref{eq:tps2_llg_variational} admits a unique solution $\vv_h^i \in \Kh{\mm_h^i}$. In particular, Algorithm~\ref{alg:abtps} is well-defined.

{\rm(ii)} Suppose that all preceding assumptions~\eqref{eq:initconvergence_m0}--\eqref{eq:assumptions:Pi} are satisfied. Let $\mm_{hk}$ be the postprocessed output~\eqref{eq:discretefunctions} of Algorithm~\ref{alg:abtps}.
Then, there exists $\mm \in \HH{1}{\omega_T} \cap \L{\infty}{0,T;\HH{1}{\omega}}$ which satisfies Definition~\ref{def::weaksolution}\ref{item:weak_llg1}--\ref{item:weak_llg2}, and a subsequence of $\mm_{hk}$ converges weakly in $\HH{1}{\omega_T}$ towards $\mm$ as $h,k\to0$.

{\rm(iii)} Under the assumptions of {\rm(ii)}, suppose that the convergence properties~\eqref{eq:initconvergence_m0} as well as~\eqref{eq:pihkconvergence} and~\eqref{eq:Pihkconvergence} hold with strong convergence. Moreover, suppose additionally that $\ff \in C^1([0,T],\LL{2}{\omega}) \cap C([0,T],\LL{3}{\omega})$ and that $\ppi$ is $\boldsymbol{L}^3$-stable, i.e.,
\begin{align}\label{eq:strongerboundedness}
\norm{\ppi(\vvarphi)}{\LL{3}{\omega}} \leq C_{\ppi}' \, \norm{\vvarphi}{\LL{3}{\omega}} \quad \textrm{for all } \vvarphi \in \HH{1}{\omega}.
\end{align}
Then, $\mm$ from {\rm(ii)} is a weak solution of LLG in the sense of Definition~\ref{def::weaksolution}\ref{item:weak_llg1}--\ref{item:weak_llg4}.
\end{theorem}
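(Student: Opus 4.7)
The three claims share a common backbone consisting of a single discrete energy estimate; I will address them in turn.

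For~(i), the plan is to view~\eqref{eq:tps2_llg_variational} as a linear problem in $\vv_h^i \in \Kh{\mm_h^i}$ once the $\vv$-dependent pieces of $\pih{i}{\cdot}{\mm_h^i}{\mm_h^{i-1}}$ and $\Pih{i}{\cdot}{\mm_h^i}{\mm_h^{i-1}}$ are moved to the left-hand side. Testing with $\vvarphi_h = \vv_h^i$, the cross-product term vanishes, the exchange piece contributes $\tfrac{\Cex k (1+\rho(k))}{2}\norm{\nabla \vv_h^i}{\omega}^2$, and the definition of $\weight_{M(k)}$ yields the lower bound $\weight_{M(k)}(\lambda_h^i) \ge \alpha/(1+\tfrac{1}{2\alpha}kM(k))$, which exceeds $\alpha/2$ for small $k$ since $kM(k)\to 0$. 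The Lipschitz-type bounds~\eqref{eq:pihlipschitz} and~\eqref{eq:Pihlipschitz} produce a perturbation of size $\lesssim k\,(\norm{\vv_h^i}{\omega}^2 + \norm{\nabla \vv_h^i}{\omega}^2)$ which is absorbed into the coercive part for $k$ below an explicit threshold $k_0$. As $\Kh{\mm_h^i}$ is finite-dimensional, this coercivity gives both existence and uniqueness; the update~\eqref{alg:abtpsupdate} is well-defined because $|\mm_h^i(\zz_h)+k\vv_h^i(\zz_h)|^2 = 1 + k^2|\vv_h^i(\zz_h)|^2 \geq 1$, and the resulting $\mm_h^{i+1}$ lies in $\Mh$ by construction.

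For~(ii), I will test~\eqref{eq:tps2_llg_variational} with $\vvarphi_h = \vv_h^i$, multiply by $k$, and sum over $i$. The weakly-acute hypothesis gives the projection contraction $\norm{\nabla \mm_h^{i+1}}{\omega} \le \norm{\nabla(\mm_h^i + k \vv_h^i)}{\omega}$, hence the telescoping control $\tfrac{\Cex}{2}(\norm{\nabla \mm_h^{i+1}}{\omega}^2 - \norm{\nabla \mm_h^i}{\omega}^2) \le -\Cex k\prod{\nabla \mm_h^i}{\nabla \vv_h^i}_\omega + \tfrac{\Cex k^2}{2}\norm{\nabla \vv_h^i}{\omega}^2$. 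Substituting into the tested equation absorbs the $k^2\norm{\nabla \vv_h^i}{\omega}^2$ term into the stabilization $(1+\rho(k))$ and, after a discrete Gronwall argument driven by~\eqref{eq:pihstability}--\eqref{eq:Pihstability} and the uniform pointwise bound $|\mm_h^i|\le 1$, produces uniform $(h,k)$-bounds on $\max_n \norm{\nabla \mm_h^n}{\omega}$, on $\sum_i k\norm{\vv_h^i}{\omega}^2$, and on $\sum_i k^2\rho(k)\norm{\nabla\vv_h^i}{\omega}^2$. Together with the pointwise identity $|\mm_h^{i+1}(\zz_h) - \mm_h^i(\zz_h) - k\vv_h^i(\zz_h)| \le \tfrac12 k^2 |\vv_h^i(\zz_h)|^2$, inverse estimates, and the assumption $k^{-1}\rho(k)\to\infty$, the latter translates to a uniform $\LL{2}{\omega_T}$-bound on $\partial_t \mm_{hk}$. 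Aubin--Lions then yields a subsequence with $\mm_{hk}\rightharpoonup \mm$ weakly in $\HH{1}{\omega_T}$ and strongly in $\LL{2}{\omega_T}$, which upgrades $|\mm_h^i|\le 1$ and nodal unit length to $|\mm|=1$ a.e. in $\omega_T$. To pass to the limit in~\eqref{eq:tps2_llg_variational}, I will test against smooth $\vvarphi$ projected into $\Kh{\mm_h^i}$, exploit the strong $\LL{2}{\omega_T}$-convergence of $\mm_{hk}^-$ to handle the cross products, use~\eqref{eq:pihkconvergence}--\eqref{eq:Pihkconvergence} for the lower-order terms, and observe $\weight_{M(k)}(\lambda_h^i)\to\alpha$ pointwise; the initial condition is inherited from~\eqref{eq:initconvergence_m0}.

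For~(iii), the plan is to sum the pre-Gronwall energy identity without absorbing $\vv$-quadratic terms, and then to pass each summand to the limit $(h,k)\to(0,0)$. Weak lower semi-continuity in $\HH{1}{\omega}$ yields the end-time exchange energy $\tfrac{\Cex}{2}\norm{\nabla \mm(\tau)}{\omega}^2$; the weak convergence $\vv_{hk}^-\rightharpoonup \partial_t\mm$ combined with $\weight_{M(k)}\to\alpha$ from below produces the dissipation $\alpha\int_0^\tau\norm{\partial_t \mm}{\omega}^2\d{t}$. The $\ff$-, $\ppi$-, and $\Ppi$-contributions are identified through discrete summation-by-parts against $\dt{\mm_h}{i}$ in conjunction with the strengthened $\LL{3}{\omega}$-hypothesis on $\ff$, the stability~\eqref{eq:strongerboundedness} on $\ppi$, and the strong versions of~\eqref{eq:pihkconvergence}--\eqref{eq:Pihkconvergence}. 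The $\LL{3}{\omega}$ hypotheses enter precisely when dualizing lower-order field terms against the small defect between $\vv_h^i$ and $\dt{\mm_h}{i}$ induced by the nodal projection.

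The main obstacle I anticipate lies in part~(ii): deriving an $\LL{2}{\omega_T}$-bound on $\partial_t \mm_{hk}$ without imposing a CFL coupling. The projection residual is quartic in $\vv_h^i$, and the natural inverse-estimate route loses a factor $h^{-3/2}$; only the stabilization $\rho(k)$, through the bound $\sum_i k^2\rho(k)\norm{\nabla\vv_h^i}{\omega}^2 \le C$ together with $k^{-1}\rho(k)\to\infty$, is sufficient to make this residual tend to zero as $(h,k)\to(0,0)$. Propagating this bookkeeping through the identification of the cross-product limit and through the energy inequality in~(iii) will be the technically delicate part.
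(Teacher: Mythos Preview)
Your overall strategy matches the paper's: coercivity for (i), a discrete energy estimate plus compactness for (ii), and lower semicontinuity for (iii). For (i) you propose a direct Lax--Milgram argument (moving the affine-in-$\vv$ parts of $\ppi_h^i,\Ppi_h^i$ to the left and absorbing them for small $k$), whereas the paper instead runs a Banach fixpoint iteration on those terms; both are valid, and the paper's choice has the side benefit of furnishing a practical solver.

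There is, however, a genuine gap in your plan for (ii). You identify as the ``main obstacle'' the $\LL{2}{\omega_T}$-bound on $\partial_t\mm_{hk}$, and propose to obtain it from the quadratic residual $|\mm_h^{i+1}(\zz_h)-\mm_h^i(\zz_h)-k\vv_h^i(\zz_h)|\le\tfrac12 k^2|\vv_h^i(\zz_h)|^2$ together with inverse estimates and the stabilization bound $\sum_i k^2\rho(k)\norm{\nabla\vv_h^i}{\omega}^2\le C$. This route does not close: the residual in $\LL{2}{\omega}$ scales like $k\norm{\vv_h^i}{\LL{4}{\omega}}^2$, and summing $k\norm{\dt{\mm_h}{i}-\vv_h^i}{\omega}^2$ produces a quartic expression that neither an inverse estimate nor the stabilization controls without a CFL condition (try it: you end up with factors like $k/h^3$ or $1/\rho(k)$). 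The correct observation, which the paper uses, is the \emph{separate} and stronger pointwise inequality
\[
|\mm_h^{i+1}(\zz_h)-\mm_h^i(\zz_h)|\le k\,|\vv_h^i(\zz_h)|,
\]
an elementary consequence of $|\mm_h^i(\zz_h)|=1$ and $\mm_h^i(\zz_h)\cdot\vv_h^i(\zz_h)=0$. By a scaling argument this gives $\norm{\dt{\mm_h}{i}}{\omega}\lesssim\norm{\vv_h^i}{\omega}$ and hence $\norm{\partial_t\mm_{hk}}{\LL{2}{\omega_T}}^2\lesssim\sum_i k\norm{\vv_h^i}{\omega}^2\le C$ directly, with no residual, no inverse estimate, and no $\rho(k)$.

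The stabilization $\rho(k)$ \emph{is} essential in (ii), but for a different purpose than you state: it is what forces $k\,\nabla\vv_{hk}^-\to 0$ in $\LL{2}{\omega_T}$ (via $k^2\sum_i k\norm{\nabla\vv_h^i}{\omega}^2=(k/\rho(k))\sum_i k^2\rho(k)\norm{\nabla\vv_h^i}{\omega}^2\lesssim k/\rho(k)\to 0$), which in turn kills the term $\tfrac{\Cex}{2}k(1+\rho(k))\prod{\nabla\vv_h^i}{\nabla\vvarphi_h}_\omega$ when passing to the limit in the variational formulation. Your plan for (iii) is correct; the $\LL{3}{\omega}$ hypotheses on $\ff$ and $\ppi$ are indeed used exactly as you say, to dualize against the projection defect in $\LL{{3/2}}{\omega}$.
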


\newpage

\begin{remark}\label{remarks:llg}
{\rm (i)} If the solution $\mm \in \HH{1}{\omega_T} \cap \L{\infty}{0,T;\HH{1}{\omega}}$ of Definition~\ref{def::weaksolution} is unique, then the full sequence $\mm_{hk}$ converges weakly in $\HH{1}{\omega_T}$ towards $\mm$.

{\rm(ii)} The assumptions~\eqref{eq:assumptions:pi} on $\ppi_h$ (even with strong convergence) as well as~\eqref{eq:strongerboundedness} on $\ppi$ are satisfied, e.g., for uniaxial anisotropy and stray field. Possible stray field discretizations include hybrid FEM-BEM approaches~\cite{fk1990}; see~\cite{bffgpprs2014,prs2016}.

{\rm(iii)} The Lipschitz-type estimates~\eqref{eq:pihlipschitz} and~\eqref{eq:Pihlipschitz} are only used to prove that Algorithm~\ref{alg:abtps} is well-defined for sufficiently small $k>0$. The stability estimates~\eqref{eq:pihstability} and~\eqref{eq:Pihstability} are then used to prove some discrete energy estimate (Lemma~\ref{lemma:energy_ellg}). Finally, the consistency assumptions~\eqref{eq:pihkconvergence} and~\eqref{eq:Pihkconvergence} are used to show that the existing limit satisfies the variational formulation of Definition~\ref{def::weaksolution}\ref{item:weak_llg2}.

{\rm (iv)} The assumptions on $\ppi_h$ and $\Ppi_h$ from~\eqref{eq:assumptions:pi}--\eqref{eq:assumptions:Pi} are only exploited for $\vvarphi_{h} = \mm_{h}^i$, $\widetilde{\vvarphi}_{h} = \mm_{h}^{i-1}$, and $\vvarphi_{hk} = \mm_{hk}^-$, $\widetilde{\vvarphi}_{hk} = \mm_{hk}^=$, $\ppsi_{hk} = \vv_{hk}^-$. Moreover, only
~\eqref{eq:pihlipschitz} and~\eqref{eq:Pihlipschitz} require general $\ppsi_{h}, \widetilde{\ppsi}_h$, while
~\eqref{eq:pihstability} and~\eqref{eq:Pihstability} are only exploited for $\ppsi_h = \vv_h^i$.
\end{remark}

The proof of Theorem~\ref{thm:maintheorem} is postponed to Section~\ref{section:proof:maintheorem}.
Unique solvability of~\eqref{eq:tps2_llg_variational} will be proved with the Banach fixpoint theorem.  Theorem~\ref{thm:maintheorem}(ii)--(iii) will be proved through an energy argument which consists of the following three steps:
\begin{itemize}
\item bound the discrete energy (Lemma~\ref{lemma:energy});
\item extract weakly convergent subsequences having a common limit (Lemma~\ref{lemma:extractsubsequences});
\item verify that this limit is a weak solution in the sense of Definition~\ref{def::weaksolution}.
\end{itemize}

\section{Landau--Lifshitz--Gilbert equation with eddy currents} 
\label{section:ellg}

\subsection{ELLG and weak solutions}%
\label{subsection:model_ELLG}
Let $\Omega \subset \R^3$ be a bounded Lipschitz domain with $\Omega \supseteq \omega$ that represents a conducting body with its ferromagnetic part $\omega$. ELLG reads
\begin{subequations}\label{eq:ELLG}
\begin{align}
\partial_t \mm &= - \mm \times \big(\heff(\mm) + \Ppi(\mm) + \hh \big) 
+ \alpha \, \mm \times \partial_t \mm 
&&\hspace*{-3mm}
\textrm{ in } \omega_T := (0,T) \times \omega, \label{eq:ELLG1} 
\\
- \mu_0 \, \partial_t \mm &= \mu_0 \, \partial_t \hh +  \nabla \times  (\sigma^{-1} \nabla \times \hh ) 
&&\hspace*{-3mm}
\textrm{ in } \Omega_T := (0,T) \times \Omega, \label{eq:ELLG2} 
\\
\partial_{\mathbf{n}} \mm 
&= \boldsymbol{0} 
&&\hspace*{-3mm}
\textrm{ on } \left(0,T\right) \times \partial \omega, \label{eq:ELLG3} 
\\
\hspace*{-1.5mm}( \nabla \times \hh ) \times \mathbf{n} 
&= \boldsymbol{0}  
&&\hspace*{-3mm}
\textrm{ on } \left(0,T\right) \times \partial \Omega, \label{eq:ELLG4}
\\
(\mm, \hh) \lr{0} &= (\mm^0,\hh^0) 
&&\hspace*{-3mm}
\textrm{ in } \omega \times \Omega.
\label{eq:ELLG5}
\end{align}
\end{subequations}
Here, $\sigma \in \L{\infty}{\Omega}$ with $\sigma \ge \sigma_0 > 0$ is the conductivity of $\Omega$, and $\mu_0 > 0$ is the vacuum permeability. The initial condition $\hh^0 \in \Hcurl{\Omega}$ satisfies the compatibility conditions
\begin{align}\label{eq:hcurlinit}
\nabla \cdot ( \hh^0 + \chi_{\omega} \mm^0 ) = 0 \textrm{ in } \Omega \quad \textrm{and} \quad 
(\hh^0 + \chi_{\omega} \mm^0 ) \cdot \mathbf{n} = 0 \textrm{ on } \partial \Omega.
\end{align}
Unlike \cite[Definition 2.1]{lt2013}, we define the energy functional
\begin{align}\label{eq:energyfunctional_ellg}
\mathcal{E}_{\textrm{ELLG}}(\mm,\hh) := \frac{\Cex}{2} \, \norm{\nabla \mm}{\omega}^2 - \frac12 \prod{\boldsymbol{\pi}(\mm)}{\mm}_{\omega} 
- \prod{\ff}{\mm}_{\omega} +  \frac12 \norm{\hh}{\Omega}^2.
\end{align}
As in~\cite{as1992,lt2013}, we use the following notion of weak solutions of ELLG~\eqref{eq:ELLG}:

\begin{definition}\label{def:weak_ellg}
With the foregoing notation, the pair $(\mm,\hh)$ is called a weak solution of ELLG~\eqref{eq:ELLG}, if the following conditions \ref{item:weak_ellg1}--\ref{item:weak_ellg5} are satisfied:
\begin{enumerate}[label=\rm{(\roman*)}]
\item \label{item:weak_ellg1} $\mm \in \HH{1}{\omega_T} \, \cap \, \L{\infty}{0,T;\HH{1}{\omega}}$ with $\left|\mm \right| = 1$ a.e. in $\omega_T$;
\item \label{item:weak_ellg2} $\hh \in \H{1}{0,T;\LL{2}{\Omega}} \cap 
\L{\infty}{0,T;\boldsymbol{H}({\rm curl};\Omega)}$.
\item \label{item:weak_ellg3} $\mm \l0r = \mm^0$ and $\hh \l0r = \hh^0$ in the sense of traces;
\item \label{item:weak_ellg4} for all $\vvarphi\in\HH{1}{\omega_T}$ and for all $\zzeta \in \L{2}{0,T;\boldsymbol{H}({\rm curl},\Omega)}$, it holds that
\begin{subequations}\label{eq:variational_ellg}
\begin{align}\label{eq:variational_ellg1}
\Int{0}{T} \prod{\partial_t \mm}{\vvarphi}_{\omega} \d{t} 
&=\Cex \Int{0}{T} \prod{\mm \times \nabla  \mm}{\nabla \vvarphi}_{\omega} \d{t} 
 - \Int{0}{T} \prod{\mm \times \ff}{\vvarphi}_{\omega} \d{t}\notag \\
&\quad- \Int{0}{T} \prod{\mm \times \ppi(\mm)}{\vvarphi}_{\omega} \d{t} 
 - \int_0^T\prod{\mm\times\Ppi(\mm)}{\vvarphi}_{\omega}\,\d{t}  \notag \\
&\quad -\Int{0}{T} \prod{\mm  \times \hh}{\vvarphi}_{\omega} \d{t} + 
\alpha \Int{0}{T} \prod{\mm \times \partial_t \mm}{\vvarphi}_{\omega} \d{t}, \\
\label{eq:variational_ellg2}
-\mu_0 \Int{0}{T} \prod{\partial_t \mm}{\zzeta}_{\omega} \d{t} &=
\mu_0 \Int{0}{T} \prod{\partial_t \hh}{\zzeta}_{\Omega} \d{t} +  \Int{0}{T} \prod{\sigma^{-1} \nabla \times \hh}{\nabla \times \zzeta}_{\Omega} \d{t};
\end{align}
\end{subequations}
\item\label{item:weak_ellg5} for almost all $\tau \in (0,T)$, it holds that
\begin{align}\label{eq:strongerenergyestimate_ellg}
& \mathcal{E}_{{\rm ELLG}}(\mm(\tau),\hh(\tau)) + \alpha \Int{0}{\tau} \norm{\partial_t \mm}{\omega}^2 \d{t}
+ \int_0^{\tau} \prod{\partial_t \ff}{\mm}_{\omega} \d{t} \notag \\
& \quad + \frac{1}{\mu_0} \Int{0}{\tau} \norm{\sigma^{-1/2}\nabla \times \hh}{\Omega}^2 \d{t}
 - \int_0^{\tau} \prod{\Ppi(\mm)}{\partial_t \mm}_{\omega} \d{t} \leq \mathcal{E}_{{\rm ELLG}}(\mm^0,\hh^0).
\end{align}
\end{enumerate}
\end{definition}

\subsection{Discretization}%
\label{section:discretization:ellg}
We adopt the notation of Section~\ref{section:discretization:time}. Let $\Trian$ be a conforming triangulation of $\Omega$ into compact tetrahedra $K \in \Trian$. Suppose that $\Trian$ is $\Cmesh$-quasi-uniform (cf.~\eqref{eq:quasiuniform}). We suppose that $\Trian$ resolves $\omega$, i.e., 
\begin{align}
 \overline\omega = \bigcup_{K\in\Trian|_\omega}K,
 \quad\text{where}\quad
 \Trian|_\omega := \set{K\in\Trian}{K \subseteq \overline\omega}.
\end{align}
Let $\NN_h$ (resp.\ $\NN_h|_\omega$) be the set of nodes of $\Trian$ (resp.\ $\Trian|_\omega$). We suppose that $\Trian|_\omega$ is weakly acute and define $\Vh$, $\Mh$, and $\Kh{\cdot}$ with respect to $\Trian|_\omega$ as in Section~\ref{subsection:spacediscretization}. Finally, the space of N\'{e}d\'{e}lec edge elements of second type~\cite{nedelec1986} on $\Omega$ reads
\begin{align*}
\cchi_h := \{ \zzeta_h \in \Hcurl{\Omega} : \zzeta_h|_{K} \in \mathcal{P}^1 \lr{K} \textrm{ for all } K \in \Trian \} \subset \Hcurl{\Omega}.
\end{align*}

\subsection{Almost second-order tangent plane scheme}%
\label{section:ellg:algorithm}
In this section, we extend Algorithm~\ref{alg:abtps} to ELLG~\eqref{eq:ELLG}. More precisely, we combine Algorithm~\ref{alg:abtps} for the LLG-part with an implicit midpoint scheme for the eddy current part. 

\begin{algorithm}[Decoupled ELLG algorithm]\label{alg:abtps_ellg}
{\bf Input:} $\Ttheta^k \in \R^{N \times 3}$, approximations $\mm_h^{-1} := \mm_h^0 \in \Mh$ and $\hh_h^{-1} := \hh_h^0 \in \cchi_h$ of the initial values.
\\
{\bf Loop:} For $i=0,\dots,N-1$, iterate the following steps {\rm(a)}--{\rm(d)}:\\
{\rm (a)} Compute the discrete function
\begin{align}
\lambda_h^i := - \Cex \, |\nabla \mm_h^i |^2 
+ \big(\ff(t_i) + \ppi_h(\mm_h^i) + \Ppi_h(\mm_h^i) + \hh_h^i \big) \cdot \mm_h^i.
\end{align} 
{\rm (b)} Find $\vv_h^{i} \in \Kh{\mm_h^i}$ such that, for all $\vvarphi_h \in 
\Kh{\mm_h^i}$, it holds that
\begin{subequations}\label{eq:abtps_ellg}
\begin{align}\label{eq:tps2_ellg_variational1}
&\prod{\weight_M(\lambda_h^i)\vv_h^i}{\vvarphi_h}_{\omega} + \prod{\mm_h^i\times\vv_h^i}{\vvarphi_h}_{\omega} + 
\frac{\Cex}{2} \, k \, (1+\rho(k)) \, \prod{\nabla\vv_h^i}{\nabla\vvarphi_h}_{\omega} \notag \\
&= -\Cex\prod{\nabla\mm_h^i}{\nabla\vvarphi_h}_{\omega} 
+ \prod{\ff(t_{i+1/2})}{\vvarphi_h}_{\omega}
+ \prod{\pih{i}{\vv_h^i}{\mm_h^i}{\mm_h^{i-1}}}{\vvarphi_h}_{\omega}
\notag 
\\
& \quad +  \prod{\Pih{i}{\vv_h^i}{\mm_h^i}{\mm_h^{i-1}}}{\vvarphi_h}_{\omega}  + \prod{\gentimeh{\hh}{i}}{\vvarphi_h}_{\omega}.
\end{align}
{\rm (c)} Define $\mm_h^{i+1}\in\Mh$ by 
\begin{align}\label{alg:abtpsupdate_ellg}
\mm_h^{i+1}(\zz_h) := \frac{\mm_h^i(\zz_h)+k\vv_h^i(\zz_h)}{|\mm_h^i(\zz_h)+k\vv_h^i(\zz_h)|} \quad\text{ for all } \zz_h\in\NN_h.
\end{align}
{\rm(d)} Find $\hh_h^{i+1} \in \cchi_h$ such that, for all $\zzeta_h \in \cchi_h$, it holds that
\begin{align}\label{eq:tps2_ellg_variational2}
- \mu_0 \prod{\dth{\mm}{i}}{\zzeta_h}_{\omega} = \mu_0 \prod{\dth{\hh}{i}}{\zzeta_h}_{\Omega} + 
\prod{\sigma^{-1} \nabla \times \midh{\hh}{i}}{\nabla \times \zzeta_h}_{\Omega}.
\end{align}
\end{subequations}
{\bf Output:} Approximations $\mm_h^i \approx \mm(t_i)$ and $\hh_h^i \approx \hh(t_i)$ for all $i=0,\dots,N$.
\qed
\end{algorithm}

\begin{remark}\label{remark:adellg}
{\rm(i)} It depends on the choice of $\Ttheta^k$ (and hence of $\gentimeh{\hh}{i}$ in~\eqref{eq:tps2_ellg_variational1}) whether step~{\rm(b)--(d)} of Algorithm~\ref{alg:abtps_ellg} have to be solved simultaneously or sequentially.

{\rm(ii)} For the first time-step $i=0$, we choose $\Ttheta^k_{03} : = \Ttheta^k_{02} := 1/2$ and $\Ttheta^k_{01} = 0$, i.e.,
\begin{align}\label{eq:elle_ellg_choice1}
\gentimeh{\hh}{i} = \midh{\hh}{i} = \hh_h^{i} + \frac{k}{2} \dth{\hh}{i}.
\end{align}
Then, step~{\rm(b)--(d)} of Algorithm~\ref{alg:abtps_ellg} have to be solved simultaneously. Moreover, because of step~{\rm(c)}, the overall system is nonlinear in $\vv_h^i$. For the subsequent time-steps $i \geq 1$, we choose $\Ttheta^k_{i3} : = 0$, $\Ttheta^k_{i2} := 3/2$, and $\Ttheta^k_{i1} = -1/2$, i.e.,
\begin{align}\label{eq:elle_choice1}
\gentimeh{\hh}{i} = \frac32 \hh_h^{i} - \frac12 \hh_h^{i-1}.
\end{align}
Then, the numerical integrator decouples the time-stepping for LLG and eddy currents, and we need to solve only two linear systems per time-step.

{\rm(iii)} The choice of $\Ttheta^k_i$ for $i\ge1$ is motivated by the explicit Adams--Bashforth two-step method which is of second order in time. By choice~\eqref{eq:assumption:choice} of $\rho$, Algorithm~\ref{alg:abtps_ellg} then is formally of almost second order in time.

{\rm(iv)} In principle, we can replace $-\mu_0\prod{\dth{\mm}{i}}{\zzeta_h}_{\omega}$ 
in equation~\eqref{eq:tps2_ellg_variational2} of Algorithm~\ref{alg:abtps_ellg} by $- \mu_0 \prod{\vv_h^i}{\zzeta_h}_{\omega}$. Even in the implicit case~\eqref{eq:elle_ellg_choice1}, the resulting scheme is then linear in $\vv_h^i$ and $\midh{\hh}{i}$. However, Lemma~\ref{lemma::vformula} predicts that $\partial_t \mm = \vv + \mathcal{O}(k)$, and thus we only expect first-order convergence in time. This is also confirmed numerically in Section~\ref{example4}.

{\rm(v)} In practice, we solve~\eqref{eq:tps2_ellg_variational2} for the unknown $\nnu_h := \midh{\hh}{i} \in \cchi_h$, i.e., we compute the unique $\nnu_h \in \cchi_h$ such that, for all $\zzeta_h \in \cchi_h$, 
\begin{align}\label{eq:solvemidpoint_eddy}
\frac{2\mu_0}{k} \prod{\nnu_h}{\zzeta_h}_{\Omega} + 
\prod{\sigma^{-1} \nabla \times \nnu_h}{\nabla \times \zzeta_h}_{\Omega}
= 
- \mu_0 \prod{\dth{\mm}{i}}{\zzeta_h}_{\omega} + 
\frac{2\mu_0}{k} \prod{\hh_h^i}{\zzeta_h}_{\Omega}.
\end{align}
Then, $\hh_h^{i+1} := 2 \, \nnu_h - \hh_h^i$ solves~\eqref{eq:tps2_ellg_variational2}.

{\rm(vi)} Theorem~\ref{thm:maintheorem_ellg} states well-posedness of Algorithm~\ref{alg:abtps_ellg} as well as unconditional convergence towards a weak solution of ELLG in the sense of Definition~\ref{def:weak_ellg}. The proof of Theorem~\ref{thm:maintheorem_ellg}~{\rm (i)} provides a convergent fixpoint solver for the first (nonlinear) time-step $i = 0$.
\end{remark}

\subsection{Main theorem for ELLG integration}
This section states our convergence result for Algorithm~\ref{alg:abtps_ellg}. In~\cite{lt2013,lppt2015} and~\cite{bpp2015}, similar results are proved for a first-order tangent plane scheme for ELLG and for the Maxwell-LLG system, respectively. 

\begin{theorem}\label{thm:maintheorem_ellg}
{\rm(i)} Suppose that the Lipschitz-type estimates~\eqref{eq:pihlipschitz} and~\eqref{eq:Pihlipschitz} are satisfied. Suppose that the coupling parameter $\Ttheta^k \in \R^{N \times 3}$ satisfies
\begin{align}\label{eq:thetainfnone_ellg}
\max_{ij} \abs{\Ttheta^k_{ij}} \leq C_{\Ttheta} < \infty \quad \textrm{and} \quad
\sum\limits_{j=1}^3 \Ttheta^k_{ij} = 1 \quad \textrm{for all } i \in \{0,\dots,N-1\}.
\end{align}
Then, there exists $k_0>0$ such that for all $0<k<k_0$ the discrete variational formulation~\eqref{eq:abtps_ellg} admits a unique solution $(\vv_h^i,\hh_h^{i+1}) \in \Kh{\mm_h^i} \times \cchi_h$. In particular, Algorithm~\ref{alg:abtps_ellg} is well-defined.
 
{\rm(ii)} Suppose that~\eqref{eq:thetainfnone_ellg} as well as  all assumptions from Section~\ref{section:maintheorem:llg} are satisfied. Let $(\mm_{hk}, \hh_{hk})$ be the postprocessed output~\eqref{eq:discretefunctions} of Algorithm~\ref{alg:abtps_ellg}. 
Then, there exist $\mm \in \HH{1}{\omega_T} \cap \L{\infty}{0,T;\HH{1}{\omega}}$ and $\hh \in \H{1}{0,T;\LL{2}{\Omega}} \cap \L{\infty}{0,T;\boldsymbol{H}({\rm curl},\Omega)}$ which satisfy Definition~\ref{def:weak_ellg}\ref{item:weak_ellg1}--\ref{item:weak_ellg4}, and a subsequence of $(\mm_{hk},\hh_{hk})$ converges weakly in $\HH{1}{\omega_T}\times\LL{2}{\Omega_T}$ towards $(\mm,\hh)$ as $h,k\to0$.

{\rm(iii)} Under the assumptions of {\rm(ii)}, suppose that the convergence properties~\eqref{eq:initconvergence_m0} as well as~\eqref{eq:pihkconvergence} and~\eqref{eq:Pihkconvergence} hold with strong convergence. Moreover, suppose additionally that $\ff \in C^1([0,T],\LL{2}{\omega}) \cap  C([0,T],\LL{3}{\omega})$ and that $\ppi$ is $\boldsymbol{L}^3$-stable~\eqref{eq:strongerboundedness}. 
Finally, let the CFL condition $k = \oo(h^2)$ be satisfied.
Then, the pair $(\mm,\hh)$ from {\rm(ii)} is a weak solution of ELLG in the sense of Definition~\ref{def:weak_ellg}\ref{item:weak_ellg1}--\ref{item:weak_ellg5}.
\end{theorem}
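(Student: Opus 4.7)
\textbf{Proof plan for Theorem~\ref{thm:maintheorem_ellg}.}

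\emph{Part (i): well-posedness.} For the decoupled time-steps $i\ge 1$, the choice~\eqref{eq:elle_choice1} makes $\gentimeh{\hh}{i}$ a known datum, so~\eqref{eq:tps2_ellg_variational1} is structurally identical to~\eqref{eq:tps2_llg_variational} (with an extra source), and unique solvability of $\vv_h^i$ follows verbatim from Theorem~\ref{thm:maintheorem}(i). Then~\eqref{eq:solvemidpoint_eddy} is coercive on $\cchi_h$ (the mass term has constant $2\mu_0/k$, the curl term is $\ge 0$), so Lax--Milgram yields $\hh_h^{i+1}$. The nontrivial case is $i=0$, where~\eqref{eq:elle_ellg_choice1} couples~\eqref{eq:tps2_ellg_variational1}--\eqref{eq:tps2_ellg_variational2} through $\midh{\hh}{0}$ and, via~\eqref{alg:abtpsupdate_ellg}, through the nonlinear nodal projection. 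I would run a Banach fixed-point iteration on $\midh{\hh}{0}\in\cchi_h$: given an iterate, obtain $\vv_h^0$ from~\eqref{eq:tps2_ellg_variational1} (unique by the LLG argument), compute $\mm_h^1$ by~\eqref{alg:abtpsupdate_ellg}, and update $\midh{\hh}{0}$ via~\eqref{eq:solvemidpoint_eddy}. The contraction constant is $\mathcal{O}(k)$: the nodal projection is $1$-Lipschitz when $|\mm_h^0+k\vv_h^0|\ge 1$, the Lipschitz bounds~\eqref{eq:pihlipschitz}--\eqref{eq:Pihlipschitz} give the LLG link, and the $2\mu_0/k$-coercivity of~\eqref{eq:solvemidpoint_eddy} absorbs the $\mathcal{O}(1)$ Lipschitz dependence of its right-hand side on $\dth{\mm}{0}$.

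\emph{Part (ii): discrete energy bound and extraction.} Following the scheme of Lemma~\ref{lemma:energy}, I test~\eqref{eq:tps2_ellg_variational1} with $\vvarphi_h=\vv_h^i$ and~\eqref{eq:tps2_ellg_variational2} with $\zzeta_h=\midh{\hh}{i}$. The LLG test produces the discrete counterpart of~\eqref{eq:strongerenergyestimate_LLG} plus the coupling $\prod{\gentimeh{\hh}{i}}{\vv_h^i}_{\omega}$; the eddy-current test telescopes into $\frac{\mu_0}{2}\big(\norm{\hh_h^{i+1}}{\Omega}^2-\norm{\hh_h^{i}}{\Omega}^2\big)$, the dissipation $\norm{\sigma^{-1/2}\nabla\times\midh{\hh}{i}}{\Omega}^2$, and the coupling $-\mu_0\prod{\dth{\mm}{i}}{\midh{\hh}{i}}_{\omega}$. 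The pointwise identity $\dth{\mm}{i}=\vv_h^i+\mathcal{O}(k\abs{\vv_h^i}^2)$ (cf.~Lemma~\ref{lemma::vformula}) together with $\gentimeh{\hh}{i}-\midh{\hh}{i}=\mathcal{O}(k)$ (true for both~\eqref{eq:elle_ellg_choice1} and~\eqref{eq:elle_choice1}, under~\eqref{eq:thetainfnone_ellg}) lets the two coupling contributions cancel up to residuals absorbable by the $\alpha$-weighted $\norm{\vv_h^i}{\omega}^2$ term and a discrete Gronwall step. Summing $i=0,\dots,n-1$ yields uniform bounds on $\norm{\nabla\mm_h^n}{\omega}$, $k\sum_i\norm{\vv_h^i}{\omega}^2$, the stabilization $k\rho(k)\sum_i\norm{\nabla\vv_h^i}{\omega}^2$, $\norm{\hh_h^n}{\Omega}$, and $k\sum_i\norm{\nabla\times\midh{\hh}{i}}{\Omega}^2$. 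Together with~\eqref{eq:assumption:rho}, quasi-uniformity, and Aubin--Lions, these translate into weak compactness of $(\mm_{hk},\hh_{hk})$ in $\HH{1}{\omega_T}\times\LL{2}{\Omega_T}$, strong compactness of $\mm_{hk}$ in $\LL{2}{\omega_T}$, and a common limit $(\mm,\hh)$ satisfying Definition~\ref{def:weak_ellg}\ref{item:weak_ellg1}--\ref{item:weak_ellg3}. The LLG variational identity~\eqref{eq:variational_ellg1} is obtained exactly as in the pure LLG case; the only new term $\prod{\mm_h^i\times\gentimeh{\hh}{i}}{\vvarphi_h}_{\omega}$ passes to the limit because $\mm_{hk}^-\to\mm$ strongly in $\LL{2}{\omega_T}$ while $\hh_{hk}^{\Ttheta}\rightharpoonup\hh$ weakly in $\LL{2}{\Omega_T}$ (a consequence of~\eqref{eq:thetainfnone_ellg}). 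Identity~\eqref{eq:variational_ellg2} is verified by testing~\eqref{eq:tps2_ellg_variational2} with the Nédélec interpolant of a smooth $\zzeta$ and invoking the uniform curl-bound.

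\emph{Part (iii): weak-solution property and role of the CFL condition.} The energy inequality~\eqref{eq:strongerenergyestimate_ellg} is obtained by passing to the limit in the summed discrete bound, relying on weak-lower-semicontinuity of $\norm{\nabla\mm}{\omega}$, $\norm{\hh}{\Omega}$, $\norm{\partial_t\mm}{\omega_T}$, and $\norm{\sigma^{-1/2}\nabla\times\hh}{\Omega_T}$, while the strong convergences assumed in (iii) identify the right-hand side as $\mathcal{E}_{\rm ELLG}(\mm^0,\hh^0)$. The main obstacle, and the reason for the CFL condition $k=\oo(h^2)$, is that the coupling residuals from part (ii) are only controlled in $L^2$ under an inverse estimate: the mismatch between $\gentimeh{\hh}{i}$ and $\midh{\hh}{i}$ and between $\dth{\mm}{i}$ and $\vv_h^i$ produces remainder terms of the form $k\norm{\midh{\hh}{i}}{\LL{\infty}{\Omega}}\norm{\vv_h^i}{\omega}^2$, which via $\norm{\midh{\hh}{i}}{\LL{\infty}{\Omega}}\lesssim h^{-3/2}\norm{\midh{\hh}{i}}{\Omega}$ are absorbed only when $k h^{-2}\to 0$. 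Once these vanish, the two parts of the energy balance combine cleanly and the proof is complete, following the pure-LLG blueprint in Section~\ref{section:proof:maintheorem}.
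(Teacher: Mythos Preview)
Your plan for part~(i) is fine and matches the paper's fixpoint argument, just organised slightly differently (you iterate on $\midh{\hh}{0}$ alone, the paper on the pair $(\vv_h^0,\midh{\hh}{0})$).

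There is, however, a genuine gap in part~(ii). Testing~\eqref{eq:tps2_ellg_variational2} with $\zzeta_h=\midh{\hh}{i}$ gives you control of $\norm{\hh_h^j}{\Omega}$ and of $k\sum_i\norm{\sigma^{-1/2}\nabla\times\midh{\hh}{i}}{\Omega}^2$, but this is \emph{not} enough to place $\hh$ in $H^1(0,T;\LL{2}{\Omega})\cap L^\infty(0,T;\Hcurl{\Omega})$ as the theorem requires. You are missing a second test of~\eqref{eq:tps2_ellg_variational2}, namely with $\zzeta_h=\dth{\hh}{i}$, which yields
\[
\mu_0\norm{\dth{\hh}{i}}{\Omega}^2+\dtshort\norm{\sigma^{-1/2}\nabla\times\hh_h^{i+1}}{\Omega}^2\le\mu_0\norm{\dth{\mm}{i}}{\omega}^2\lesssim\norm{\vv_h^i}{\omega}^2,
\]
and hence the uniform bounds on $k\sum_i\norm{\dth{\hh}{i}}{\Omega}^2$ and on $\norm{\nabla\times\hh_h^j}{\Omega}$. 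Without these you cannot identify the various time reconstructions of $\hh_{hk}$ with a common limit (you need $\norm{\hh_{hk}^\star-\hh_{hk}}{\LL{2}{\Omega_T}}^2\lesssim k^3\sum_i\norm{\dth{\hh}{i}}{\Omega}^2\to0$), nor can you show in part~(iii) that the residual $\int_0^{t_j}\prod{\hh_{hk}^{\Ttheta}-\overline{\hh}_{hk}}{\vv_{hk}^-}_\omega\,\d t$ vanishes. Relatedly, your claim in part~(ii) that $\gentimeh{\hh}{i}-\midh{\hh}{i}=\mathcal{O}(k)$ is circular: it presupposes a bound on $\dth{\hh}{i}$, which is exactly what the missing test provides. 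In the energy step itself the paper does not use this; it simply bounds $\norm{\gentimeh{\hh}{i}-\midh{\hh}{i}}{\Omega}$ by $C_{\Ttheta}(\norm{\hh_h^{i-1}}{\Omega}+\norm{\hh_h^i}{\Omega}+\norm{\hh_h^{i+1}}{\Omega})$ and feeds this into Gronwall.

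Your explanation of the CFL condition in part~(iii) is also off. The only term that actually needs $k=\oo(h^2)$ is $\int_0^{t_j}\prod{\overline{\hh}_{hk}}{\vv_{hk}^--\partial_t\mm_{hk}}_\omega\,\d t$ (the $\hh_{hk}^{\Ttheta}-\overline{\hh}_{hk}$ residual is handled by the $\dth{\hh}{i}$ bound, without CFL). The paper estimates this term using $\norm{\overline{\hh}_{hk}}{L^\infty(0,T;\LL{2}{\Omega})}$ (bounded, no inverse estimate on $\hh$) together with $\norm{\vv_h^i-\dth{\mm}{i}}{\LL{2}{\omega}}\lesssim k\norm{\vv_h^i}{\LL{4}{\omega}}^2$, then $\LL{4}{\omega}\hookleftarrow\HH{1}{\omega}$ and the inverse estimate $\norm{\nabla\vv_h^i}{\omega}\lesssim h^{-1}\norm{\vv_h^i}{\omega}$, producing the factor $kh^{-2}$. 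Your route via $\norm{\midh{\hh}{i}}{\LL{\infty}{\Omega}}\lesssim h^{-3/2}\norm{\midh{\hh}{i}}{\Omega}$ would in fact lead to $kh^{-3/2}$, not $kh^{-2}$; so besides locating the inverse estimate on the wrong factor, your arithmetic does not match your stated conclusion.
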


\begin{remark}\label{remarks:ellg}
{\rm(i)} If the solution $(\mm,\hh)$ of Definition~\ref{def:weak_ellg} is unique, then the full sequence $(\mm_{hk},\hh_{hk})$ converges weakly in $\HH{1}{\omega_T}\times\LL{2}{\Omega_T}$ towards $(\mm,\hh)$.

{\rm(ii)} If the CFL condition $k = \oo(h^2)$ is satisfied (as required for Theorem~\ref{thm:maintheorem_ellg}{\rm(iii)}), then Theorem~\ref{thm:maintheorem_ellg}{\rm(ii)} holds also for vanishing stabilization $\rho(k)\equiv0$.

{\rm(iii)} The compatibility condition~\eqref{eq:hcurlinit} of the initial data is not exploited in the proof of Theorem~\ref{thm:maintheorem_ellg}. In particular, the discrete initial data do not have to satisfy any ``discrete compatibility condition''.

{\rm(iv)} If $\prod{\dth{\mm}{i}}{\zzeta_h}_{\omega}$ in~\eqref{eq:tps2_ellg_variational2} 
is replaced by $\prod{\vv_h^i}{\zzeta_h}_{\omega}$, then Theorem~\ref{thm:maintheorem_ellg}{\rm(iii)} holds without the CFL condition $k=\oo(h^2)$ which is only exploited in~\eqref{eq:crucialterm_ellg} below. However, as mentioned in Remark~\ref{remark:adellg}, the overall integrator then appears to be only of first order in time.
\end{remark}

The proof of Theorem~\ref{thm:maintheorem_ellg} is postponed to Section~\ref{section:proof:maintheorem_ellg}. For the LLG-part of ELLG~\eqref{eq:ELLG}, we follow our proof of Theorem~\ref{thm:maintheorem}. For the eddy current part, we adapt the techniques of~\cite{lt2013,lppt2015,bpp2015,prs2016} to the setting of Algorithm~\ref{alg:abtps_ellg}.

\section{Derivation of second-order tangent plane scheme} 
\label{section:derivation}

In this section, we adapt \cite{akst2014,prs2016} in order to motivate Algorithm~\ref{alg:abtps} and to underpin that it is of (almost) second order in time.
Since solutions $\mm$ to LLG satisfy $\mm \cdot \partial_t \mm = 0$ a.e.\ in $\omega$,
we define, for any $\ppsi\in\Cinfty{\overline{\omega}}$ with $|\ppsi|=1$ its tangent space
\begin{align}\label{eq:Kpsi}
\K{\ppsi} := \set{\vvarphi\in\Cinfty{\overline{\omega}}}{\ppsi\cdot\vvarphi=0\text{ in }\omega}
\end{align}
as well as the pointwise projection onto $\K{\ppsi}$ by
\begin{align}\label{eq::morthproj}
\P_{\ppsi}(\uu) := \uu - (\uu\cdot\ppsi)\ppsi
\quad \text{for all }\uu \in \Cinfty{\overline{\omega}}.
\end{align}

\begin{lemma}[{{\cite[p.~413]{akst2014}}}]\label{lemma::vformula}
For $\mm\in\Cinfty{\overline{\omega_T}}$ with $|\mm|=1$,  it holds that
\begin{align}\label{eq::vformula}
\frac{\mm(t) + k\vv(t)}{|\mm(t) + k\vv(t)|} = \mm(t+k) + \OO(k^3),
\quad\text{where}\quad
\vv(t) := \partial_t \mm(t) + \frac{k}{2} \, \P_{\mm(t)}(\partial_{tt}\mm(t))
\end{align}
for all $t \in [0,T-k]$.\qed
\end{lemma}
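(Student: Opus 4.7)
The plan is to establish the identity by Taylor expansion around $t$, exploiting the algebraic consequences of the constraint $|\mm|=1$. I would first record these consequences: differentiating $|\mm|^2=1$ in time gives $\mm\cdot\partial_t\mm = 0$, and differentiating once more yields $\mm\cdot\partial_{tt}\mm = -|\partial_t\mm|^2$. In particular, $\P_{\mm}(\partial_t\mm) = \partial_t\mm$ and $\mm\cdot\vv(t) = 0$, so $\vv(t) \in \K{\mm(t)}$. The Taylor expansion of $\mm$ at time $t$ reads
\begin{align*}
\mm(t+k) = \mm(t) + k\,\partial_t \mm(t) + \tfrac{k^2}{2}\,\partial_{tt}\mm(t) + \OO(k^3),
\end{align*}
with the $\OO(k^3)$ remainder being uniform in $t$ because $\mm \in \Cinfty{\overline{\omega_T}}$.

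Next I would compare the numerator $\mm(t)+k\vv(t)$ with this expansion. Using the definition of $\vv$ together with $\partial_{tt}\mm = \P_{\mm}(\partial_{tt}\mm) + (\partial_{tt}\mm\cdot\mm)\mm = \P_{\mm}(\partial_{tt}\mm) - |\partial_t\mm|^2\,\mm$, I get
\begin{align*}
\mm(t) + k\vv(t) = \mm(t+k) + \tfrac{k^2}{2}\,|\partial_t\mm(t)|^2\,\mm(t) + \OO(k^3).
\end{align*}
This shows that the numerator agrees with $\mm(t+k)$ up to a correction along $\mm(t)$ of size $\OO(k^2)$, which is precisely what the normalization is expected to remove.

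For the denominator, I would use $\mm\cdot\vv=0$ to compute
\begin{align*}
|\mm(t) + k\vv(t)|^2 = 1 + k^2\,|\vv(t)|^2 = 1 + k^2\,|\partial_t\mm(t)|^2 + \OO(k^3),
\end{align*}
so that $|\mm(t)+k\vv(t)|^{-1} = 1 - \tfrac{k^2}{2}\,|\partial_t\mm(t)|^2 + \OO(k^3)$ by a scalar Taylor expansion of $s\mapsto(1+s)^{-1/2}$. Multiplying this expansion against the earlier one for $\mm(t)+k\vv(t)$ and collecting terms, the $\tfrac{k^2}{2}\,|\partial_t\mm|^2\,\mm$ correction exactly cancels (up to cubic remainders), yielding the claimed $\mm(t+k) + \OO(k^3)$.

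The only step requiring a little care is keeping track of which remainders are genuinely $\OO(k^3)$ versus $\OO(k^2)$ when multiplying the two expansions; the cross-terms $k\vv \cdot \OO(k^2)$ must be shown to be $\OO(k^3)$, which follows from the boundedness of $\partial_t\mm$ guaranteed by the smoothness hypothesis on $\mm$. I do not expect any real obstacle here — the identity is purely local in time and follows from careful bookkeeping of the Taylor expansion together with the two orthogonality identities enforced by $|\mm|=1$.
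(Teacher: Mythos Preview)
Your argument is correct. The paper does not give its own proof of this lemma: it is stated with a citation to \cite[p.~413]{akst2014} and closed with a \qedsymbol, so there is no in-paper proof to compare against. Your Taylor-expansion argument, using the identities $\mm\cdot\partial_t\mm=0$ and $\mm\cdot\partial_{tt}\mm=-|\partial_t\mm|^2$ forced by $|\mm|=1$, is exactly the standard way to establish this identity and is presumably what the cited reference contains.
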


Recall the stabilizations $M(\cdot)$ and $\rho(\cdot)$ from~\eqref{eq:rhoM} as well as the weight function $\weight_{M(k)}(\cdot)$ from~\eqref{eq:weightingtilde}.
The following lemma is implicitly found in~\cite[p.415]{akst2014}. With~\eqref{eq:assumption:M}, it proves, in particular, that $\weight_{M(k)}(\cdot)\ge \alpha/2$, if $k>0$ is sufficiently small.

\begin{lemma}\label{lemma:weight}
The weight function $\weight_{M(k)}(\cdot)$ satisfies the following two assertions {\rm(i)}--{\rm(ii)}: \newline
{\rm (i)} $|\weight_{M(k)}(s)-\alpha| \le \frac{M(k)k}{2}$ for all $s\in\R$. \newline
{\rm (ii)} If $M(k)\ge B>0$ and $k<k_0:=\alpha/B$, then
\begin{align}\label{eq::weightapprox}
\Big| \, \alpha + \frac{k}{2}s - \weight_{M(k)}(s) \, \Big| \le \frac{B^2}{2 \alpha} \, k^2 \quad \textrm{for all } s\in[-B,B] .
\end{align}
\end{lemma}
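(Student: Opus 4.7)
The proof is a direct case analysis on the sign of $s$, using the piecewise definition~\eqref{eq:weightingtilde}. No deep machinery is needed; the only mildly delicate step is the algebraic identity $(1-u)(1+u)=1-u^2$ that turns a second-order Taylor remainder into an exact bound.

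For part~(i), I would first treat $s\ge 0$, where $\weight_{M(k)}(s)-\alpha=\tfrac{k}{2}\min\{s,M(k)\}\in[0,\tfrac{kM(k)}{2}]$ gives the bound immediately. For $s<0$, write $\beta:=\tfrac{k}{2\alpha}\min\{-s,M(k)\}\in[0,\tfrac{kM(k)}{2\alpha}]$, so that $\weight_{M(k)}(s)=\alpha/(1+\beta)$. Then
\begin{equation*}
 \alpha-\weight_{M(k)}(s)=\frac{\alpha\beta}{1+\beta}\le \alpha\beta\le \frac{kM(k)}{2},
\end{equation*}
and the left-hand side is non-negative, which settles~(i).

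For part~(ii), note that the hypothesis $M(k)\ge B$ combined with $|s|\le B$ means $\min\{|s|,M(k)\}=|s|$, so the truncation is inactive. If $s\in[0,B]$, then $\weight_{M(k)}(s)=\alpha+\tfrac{k}{2}s$ exactly and the claimed bound is trivial. If $s\in[-B,0)$, set $u:=-\tfrac{ks}{2\alpha}>0$; observing $u\le \tfrac{kB}{2\alpha}<\tfrac12$ when $k<k_0=\alpha/B$ (so in particular $1+u>0$), one computes
\begin{equation*}
 \alpha+\frac{k}{2}s-\weight_{M(k)}(s)
 = \alpha(1-u)-\frac{\alpha}{1+u}
 = \frac{\alpha\bigl[(1-u)(1+u)-1\bigr]}{1+u}
 = -\frac{\alpha u^2}{1+u}.
\end{equation*}
Since $1+u\ge 1$ and $u^2\le k^2B^2/(4\alpha^2)$, the modulus is bounded by $\tfrac{k^2 s^2}{4\alpha}\le \tfrac{B^2 k^2}{4\alpha}\le \tfrac{B^2k^2}{2\alpha}$, as claimed.

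The only ``obstacle'' worth flagging is bookkeeping: one must be careful that $s<0$ in the second formula of~\eqref{eq:weightingtilde} turns the $\min\{-s,M(k)\}$ into $-s=|s|$ precisely when $|s|\le B\le M(k)$, so that the denominator becomes $1-\tfrac{ks}{2\alpha}$; and the role of the threshold $k_0=\alpha/B$ is merely to keep $u<1/2$ so that the factor $1/(1+u)$ is harmless. Both parts are otherwise purely elementary.
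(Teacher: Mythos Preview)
Your proof is correct. Part~(i) is essentially identical to the paper's argument. For part~(ii), you take a slightly different and cleaner route: where the paper computes the derivatives of $\weight_{M(k)}$ for $s<0$, expands in a Taylor series at $s=0^-$, and sums the resulting geometric series (using $|ks/(2\alpha)|\le 1/2$ to bound the tail by a factor of~$2$), you instead use the exact algebraic identity $(1-u)(1+u)-1=-u^2$ to obtain the closed form $-\alpha u^2/(1+u)$ directly. Your approach avoids the infinite series entirely, gives the sharper bound $B^2k^2/(4\alpha)$, and in fact does not even require the hypothesis $k<k_0$ (you only need $1+u>0$, which holds automatically since $u>0$); the paper needs $k<k_0$ to control the geometric series. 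Both arguments are elementary, but yours is the more economical one.
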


\begin{proof}
For $s \geq 0$, {\rm (i)} follows immediately from the definition~\eqref{eq:weightingtilde}. For $s < 0$, we get that
\begin{align*}
|\weight_{M(k)}(s)-\alpha| \stackrel{\eqref{eq:weightingtilde}}{=} 
\frac{\frac{k}{2} \min\{-s,M(k)\}}{1+\frac{k}{2 \alpha}\min\{-s,M(k)\}} \leq \frac{k}{2} \min\{-s,M(k)\} \leq \frac{M(k)k}{2}.
\end{align*}
This proves {\rm (i)}. We come to the proof of {\rm (ii)}: If $0\le s\le B$, then $B\leq M$ yields that
\begin{align*}
\alpha + \frac{k}{2} s -  \weight_{M(k)}(s) \stackrel{\eqref{eq:weightingtilde}}{=} 0.
\end{align*}
If $-B\le s < 0$ and $j\in\N$, the $j$-th derivative of $\weight_{M(k)}(\cdot)$ reads
\begin{align*}
\weight_{M(k)}^{(j)}(s) =\frac{2\alpha^2k^jj!}{(2\alpha-ks)^{1+j}}\,.
\end{align*}
Since $k<k_0:=\alpha/B$, it holds that
\begin{align*}
q := \Big| \frac{ks}{2 \alpha} \Big| \leq \Big| \frac{s}{2 B} \Big| \leq \frac12
\quad \textrm{and thus} \quad
\sum\limits_{j=0}^{\infty} q^j = \frac{1}{1-q} \leq 2.
\end{align*}
The Taylor expansion at $s=0^-$ shows that
\begin{align*}
\Big| \alpha + \frac{k}{2}s - \weight_{M(k)}(s)\Big| = \Big| \sum_{j=2}^\infty \frac{k^j s^j}{2^j\alpha^{j-1}} \Big| = 
\Big| \frac{k^2 s^2}{4\alpha}\sum_{j=0}^\infty \Big(\frac{ks}{2\alpha}\Big)^j \Big| 
\le 2 \, \frac{k^2 s^2}{4 \alpha} \leq \frac{B^2 k^2}{2 \alpha}.
\end{align*}
This concludes the proof.
\end{proof}

The following proposition is the main result of this section. It shows that the consistency error of Algorithm~\ref{alg:abtps} is of order $\OO(k^2+k\rho(k))$. For $\Ppi = \0$ and $\ff$ constant in time,~\eqref{eq:cont_variational1} is implicitly contained in~\cite[Section 6]{akst2014}, while~\eqref{eq:cont_variational2} adapts some further ideas from~\cite{prs2016}.
Note that~\eqref{eq:cont_variational1} corresponds to~\eqref{eq:tps2_llg_variational} in combination with~\eqref{eq:llg_choice1} and requires the evaluation of $\ppi(\vv)$ and $\DPi{\mm}{\vv}$. In contrast to that,~\eqref{eq:cont_variational2} corresponds to~\eqref{eq:tps2_llg_variational} in combination with~\eqref{eq:llg_choice2} and avoids these evaluations at~$\vv$.

\begin{proposition}\label{proposition:variational}
Let $\DPi{\cdot}{\cdot}$ be obtained from $\DPi{\mm(t)}{\partial_t \mm(t)} := \partial_t[\Ppi(\mm(t))]$ and the chain rule.
For $\ppsi \in \Cinfty{\overline{\omega}}$ and $\ww, \vvarphi \in \HH{1}{\omega}$, define the bilinear form
\begin{subequations}\label{eq:cont_variational}
\begin{align}\label{eq:firstweakform1}
 \hspace*{-1.5mm}
 \mathbf{b} (\ppsi;\ww,\vvarphi) 
 &:= \langle \weight_{M(k)}\big(\lambda(\ppsi)\big) \ww,\vvarphi \rangle_{\omega} 
 + \prod{\ppsi\times\ww}{\vvarphi}_{\omega} 
 + \frac{\Cex}{2} \, k \, (1 + \rho(k)) \, \prod{\nabla\ww}{\nabla\vvarphi}_{\omega},
 \hspace*{-2mm}
 \\
 \intertext{where}
\label{eq:firstweakform2}
\lambda(\ppsi) &:= \big( \heff(\ppsi) + \Ppi(\ppsi) \big) \cdot \ppsi.
\end{align}
\end{subequations}
Let $\mm \in \Cinfty{\overline{\omega_T}}$ be a strong solution of LLG~\eqref{eq:llg} which satisfies
\begin{align}\label{eq:defBandM}
B := \norm{ \lambda(\mm) }{\L{\infty}{\omega_T}} \leq M(k) <\infty.
\end{align}
Then, $\vv$ from~\eqref{eq::vformula} satisfies the following two assertions {\rm (i)}--{\rm (ii)}:
\newline
\begin{subequations}
{\rm (i)} For $t \in [0,T-k]$, there exists $\RR_1 = \OO(k^2+ k \rho(k))$ such that, for all $\vvarphi\in\K{\mm}$,
 \begin{align}\label{eq:cont_variational1}
\begin{split}
& \mathbf{b} (\mm;\vv(t),\vvarphi) - \frac{k}{2} \, \prod{\ppi(\vv(t))}{\vvarphi}_{\omega}
- \frac{k}{2} \, \prod{\DPi{\mm(t)}{\vv(t)}}{\vvarphi}_{\omega} \\
&\quad= -\Cex\prod{\nabla\mm(t)}{\nabla\vvarphi}_{\omega} 
+ \prod{\ff\big( \, t + k/2 \, \big)}{\vvarphi}_{\omega} 
+ \prod{\ppi(\mm(t))}{\vvarphi}_{\omega} \\
& \quad\quad\quad
+ \prod{\Ppi(\mm(t))}{\vvarphi}_{\omega} \!+\! \prod{\RR_1}{\vvarphi}_{\omega}.
\end{split}
\end{align} 
\newline
{\rm (ii)} For $t \in [k,T-k]$, there exists $\RR_2 = \OO(k^2+ k\rho(k))$ such that, for all $\vvarphi\in\K{\mm}$,
 \begin{align}\label{eq:cont_variational2}
\mathbf{b} (\mm;\vv(t),\vvarphi)
&= -\Cex\prod{\nabla\mm(t)}{\nabla\vvarphi}_{\omega} 
+ \prod{\ff\big( \, t + k/2 \, \big)}{\vvarphi}_{\omega} 
+ \frac{3}{2} \,\prod{\ppi(\mm(t))}{\vvarphi}_{\omega} 
\notag \\
& \quad 
- \frac{1}{2} \, \prod{\ppi(\mm(t-k))}{\vvarphi}_{\omega} 
+ \prod{\Ppi(\mm(t))}{\vvarphi}_{\omega} + \frac{1}{2} \, \prod{\DPi{\mm(t)}{\mm(t)}}{\vvarphi}_{\omega} 
\notag \\
& \quad 
- \frac{1}{2} \, \prod{\DPi{\mm(t)}{\mm(t-k)}}{\vvarphi}_{\omega} 
+ \prod{\RR_2}{\vvarphi}_{\omega}.
\end{align} 
\end{subequations}
Moreover, it holds that
\begin{align}\label{eq:lambda:identity}
\lambda(\mm) = - \Cex |\nabla \mm|^2 + \big( \ff + \ppi(\mm) + \Ppi(\mm) \big) \cdot \mm.
\end{align}
\end{proposition}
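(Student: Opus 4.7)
The plan is to derive both consistency identities by rewriting the strong LLG equation in a form that exposes the scalar $\lambda(\mm)$, evaluating this rewritten equation at time $t+k/2$, and performing a careful Taylor expansion after testing against $\vvarphi \in \K{\mm(t)}$. The preliminary identity~\eqref{eq:lambda:identity} is immediate: differentiating the pointwise constraint $|\mm|^2 = 1$ twice yields $\mm \cdot \Delta \mm = -|\nabla \mm|^2$, which combined with the definition of $\lambda$ and~\eqref{eq:heffdef} gives the claim. The central algebraic step is to apply $\mm\times$ to~\eqref{eq:llg1} and use $\mm\times(\mm\times\uu)=(\mm\cdot\uu)\mm-\uu$ (valid since $|\mm|=1$); substituting back into~\eqref{eq:llg1} yields the pointwise reformulation
\[
 \alpha \, \partial_t \mm + \mm \times \partial_t \mm \, = \, \heff(\mm) + \Ppi(\mm) - \lambda(\mm) \, \mm.
\]

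Evaluating this identity at $t+k/2$ and testing against $\vvarphi\in\K{\mm(t)}$, I Taylor-expand each term around $t$ to second order. Using the defining formula $\vv = \partial_t\mm + \tfrac{k}{2}\P_{\mm}\partial_{tt}\mm$, the identities $\mm\cdot\partial_{tt}\mm=-|\partial_t\mm|^2$ and $\mm\times\P_{\mm}\uu=\mm\times\uu$, together with the pointwise orthogonality $\mm(t)\cdot\vvarphi=0$, the left-hand side reduces to $\alpha\prod{\vv}{\vvarphi}_\omega+\prod{\mm\times\vv}{\vvarphi}_\omega+\OO(k^2)$. On the right-hand side, integration by parts on the Laplacian term (using the Neumann condition inherited from $\mm$), the linearity of $\ppi$, the chain rule $\partial_t\Ppi(\mm)=\DPi{\mm}{\partial_t\mm}$, and the pointwise estimate $\vv-\partial_t\mm=\OO(k)$ produce $-\Cex\prod{\nabla\mm}{\nabla\vvarphi}_\omega-\tfrac{\Cex k}{2}\prod{\nabla\vv}{\nabla\vvarphi}_\omega+\prod{\ppi(\mm)+\ff(t+k/2)+\Ppi(\mm)}{\vvarphi}_\omega+\tfrac{k}{2}\prod{\ppi(\vv)+\DPi{\mm}{\vv}}{\vvarphi}_\omega+\OO(k^2)$. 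The $-\lambda(\mm(t+k/2))\mm(t+k/2)$ contribution on the right requires care: its zeroth-order piece $-\lambda(\mm(t+k/2))\mm(t)$ integrates against $\vvarphi$ to zero by orthogonality, while a further Taylor step on $\mm(t+k/2)=\mm(t)+\tfrac{k}{2}\partial_t\mm(t)+\OO(k^2)$ produces exactly $-\tfrac{k}{2}\prod{\lambda(\mm)\vv}{\vvarphi}_\omega + \OO(k^2)$.

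To obtain the bilinear form $\mathbf{b}$, I invoke Lemma~\ref{lemma:weight}{\rm(ii)} with $B=\norm{\lambda(\mm)}{\L{\infty}{\omega_T}}$ (which is legitimate by assumption~\eqref{eq:defBandM}) to replace $\alpha$ by $\weight_{M(k)}(\lambda(\mm)) - \tfrac{k}{2}\lambda(\mm) + \OO(k^2)$. This introduces an \emph{additional} $-\tfrac{k}{2}\prod{\lambda(\mm)\vv}{\vvarphi}_\omega$ on the left-hand side, which cancels exactly against the $-\tfrac{k}{2}\prod{\lambda(\mm)\vv}{\vvarphi}_\omega$ coming from the reformulated LLG evaluated at $t+k/2$. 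Verifying this cancellation is the heart of the argument — both contributions are of order $k$ individually, not $k^2$, so the exact match of sign and magnitude is essential and is precisely the design purpose of the weight function~\eqref{eq:weightingtilde}. Finally, the stabilization $\tfrac{\Cex k\rho(k)}{2}\prod{\nabla\vv}{\nabla\vvarphi}_\omega$ built into $\mathbf{b}$ is moved to the right-hand side and, after integration by parts (using $\partial_{\nn}\vv=\boldsymbol{0}$, which follows from smoothness of $\mm$ and Neumann BC), is absorbed into $\RR_1$ as a contribution of order $\OO(k\rho(k))$ in $\LL{2}{\omega}$. This establishes~\eqref{eq:cont_variational1}.

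For part (ii), the derivation is identical up to the treatment of $\ppi$ and $\Ppi$ on the right-hand side. I replace the implicit evaluations $\tfrac{k}{2}\ppi(\vv)$ and $\tfrac{k}{2}\DPi{\mm}{\vv}$ by Adams--Bashforth-type extrapolations computable from past data. By linearity of $\ppi$ and a Taylor expansion of $\mm(t-k)$ around $\mm(t)$, one checks $\tfrac{3}{2}\ppi(\mm(t))-\tfrac{1}{2}\ppi(\mm(t-k)) = \ppi(\mm(t+k/2))+\OO(k^2)$, which matches $\ppi(\mm)+\tfrac{k}{2}\ppi(\vv)$ to the required order. Similarly, the linearity of $\DPi{\mm(t)}{\cdot}$ in its second argument (from the chain rule) combined with $[\mm(t)-\mm(t-k)]/k = \partial_t\mm(t)+\OO(k) = \vv(t)+\OO(k)$ gives $\tfrac{1}{2}\DPi{\mm(t)}{\mm(t)}-\tfrac{1}{2}\DPi{\mm(t)}{\mm(t-k)} = \tfrac{k}{2}\DPi{\mm(t)}{\vv(t)}+\OO(k^2)$. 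Substituting these two identities into the consistency equation from part (i) yields~\eqref{eq:cont_variational2} with $\RR_2 = \OO(k^2+k\rho(k))$. The main technical hurdle throughout is the $\lambda(\mm)\vv$ cancellation discussed above; all remaining steps are routine Taylor bookkeeping exploiting the smoothness of $\mm$.
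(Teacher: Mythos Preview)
Your proposal is correct and follows essentially the same route as the paper. The only cosmetic difference is that you evaluate the reformulated LLG identity $\alpha\,\partial_t\mm+\mm\times\partial_t\mm=\heff(\mm)+\Ppi(\mm)-\lambda(\mm)\,\mm$ at $t+k/2$ and Taylor-expand each term back to $t$, whereas the paper takes the equation and its time derivative at $t$ and directly computes $\alpha\vv+\mm\times\vv$ from the definition $\vv=\partial_t\mm+\tfrac{k}{2}\P_{\mm}\partial_{tt}\mm$; after testing with $\vvarphi\in\K{\mm}$ both routes arrive at the same intermediate identity $\langle(\alpha+\tfrac{k}{2}\lambda(\mm))\vv,\vvarphi\rangle_\omega+\langle\mm\times\vv,\vvarphi\rangle_\omega=\langle\heff(\mm)+\Ppi(\mm),\vvarphi\rangle_\omega+\tfrac{k}{2}\langle\partial_t[\heff(\mm)+\Ppi(\mm)],\vvarphi\rangle_\omega+\OO(k^2)$, and the remaining manipulations (replacing $\alpha+\tfrac{k}{2}\lambda$ by $W_{M(k)}(\lambda)$ via Lemma~\ref{lemma:weight}(ii), adding the stabilization, and the Adams--Bashforth substitution for part~(ii)) are identical.

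One minor slip: you claim $\partial_{\nn}\vv=\boldsymbol{0}$, but in fact only $\partial_{\nn}\partial_t\mm=\boldsymbol{0}$ holds exactly; the second summand $\tfrac{k}{2}\P_{\mm}\partial_{tt}\mm$ in $\vv$ need not have vanishing normal derivative, so $\partial_{\nn}\vv=\OO(k)$ (the paper makes exactly this observation). This does not affect your argument, since the stabilization term already carries a prefactor $k\rho(k)$ and the extra boundary contribution is of higher order; alternatively, simply interpret $\RR_1$ as a functional on $\HH{1}{\omega}$ of norm $\OO(k^2+k\rho(k))$ and skip the integration by parts altogether, as the paper implicitly does.
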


\begin{proof}
The proof is split into the following three steps.

{\bf Step~1}.
For the proof of {\rm (i)}, we extend~\cite[Section 6]{akst2014} to our setting: Define
\begin{subequations}\label{eq:heffextended}
\begin{align*}
\heffPi(\mm) := \heff(\mm) + \Ppi(\mm).
\end{align*}
The chain rule yields that
\begin{align*}
\partial_t \big[ \heffPi(\mm) \big] = 
\partial_t \big[ \heff(\mm) \big] + 
\partial_t \big[ \Ppi(\mm) \big]
=
\Cex \Delta \partial_t \mm + \partial_t \ff \, + \ppi(\partial_t \mm) + \DPi{\mm}{\partial_t \mm}.
\end{align*}
\end{subequations}
Since $|\mm|=1$ a.e.,~\eqref{eq:llg1} is equivalent to
\begin{align}\label{eq:alternative_plain}
 \alpha \, \partial_t \mm + \mm \times \partial_t \mm 
 = \heffPi(\mm) - \big( \heffPi(\mm) \cdot \mm \big) \, \mm;
\end{align}
see, e.g.,~\cite[Lemma 1.2.1]{goldenits2012}.
Differentiating~\eqref{eq:alternative_plain} with respect to time, we obtain that
\begin{align*}
\begin{split}
\alpha \, \partial_{tt}\mm + \mm\times\partial_{tt}\mm &= 
\partial_t \big[ \heffPi(\mm) \big] - 
(\partial_t \big[ \heffPi(\mm) \big] \cdot\mm)\mm \\
&\quad - (\heffPi(\mm)\cdot\partial_t \mm)\mm - (\heffPi(\mm)\cdot\mm)\partial_t \mm \,.
\end{split}
\end{align*}
Since $\partial_t \mm \cdot \mm = 0$ in $\omega_T$, it holds that $\alpha \, \partial_t \mm + \mm \times \partial_t \mm = \P_{\mm}( \alpha \, \partial_t \mm + \mm \times \partial_t \mm )$. Therefore, we further obtain that
\begin{align}\label{eq:formalcomputation1}
\begin{split}
\alpha \, \vv + \mm\times\vv 
&\stackrel{\eqref{eq::vformula}}{=}  \P_{\mm}\Big( \alpha \, \partial_t \mm + \mm\times\partial_t \mm + \frac{k}{2}\alpha\,\partial_{tt}\mm + \frac{k}{2}\,\mm\times\partial_{tt}\mm \Big) \\
&\stackrel{\eqref{eq:alternative_plain}}{=}  
\P_{\mm}\Big( \heffPi(\mm) + 
\frac{k}{2} \, \partial_t \big[ \heffPi(\mm) \big] - 
\frac{k}{2} \, (\heffPi(\mm)\cdot\mm)\, \partial_t \mm \Big).
\end{split}
\end{align}
For $t\in[0,T-k]$, we multiply~\eqref{eq:formalcomputation1} by $\vvarphi\in\K{\mm}$ and integrate over $\omega$. We rearrange the terms and use that the definition~\eqref{eq::vformula} of $\vv$ yields that $\partial_t \mm = \vv + \OO(k)$. Then,
\begin{align*}
& \langle \alpha \, \vv(t), \vvarphi \rangle_{\omega} 
+ \prod{\mm(t)\times\vv(t)}{\vvarphi}_{\omega} 
+ \frac{k}{2} \, \langle \P_{\mm}\Big( \big( \heffPi(\mm(t))\cdot\mm(t) \big) \vv(t) \Big), \vvarphi \rangle_{\omega} 
\\&
\stackrel{\eqref{eq:formalcomputation1}}{=} 
\prod{\P_{\mm}\big( \heffPi(\mm(t)) \big)}{\vvarphi}_{\omega} 
+ \frac{k}{2} \, \prod{\P_{\mm} \big( \partial_t \big[ \heffPi(\mm(t)) \big] \big) }{\vvarphi}_{\omega}
+ \OO(k^2).
\end{align*}
Since any $\vvarphi \in \K{\mm}$ is invariant under the pointwise definition~\eqref{eq::morthproj}, the projection $\P_{\mm}$ can be omitted in the latter equation. By definition~\eqref{eq:firstweakform2} of $\lambda(\mm)$, it follows that 
\begin{align*}
& \langle \big( \alpha + \frac{k}{2} \lambda(\mm(t))\big) \vv(t), \vvarphi \rangle_{\omega} + 
\prod{\mm(t)\times\vv(t)}{\vvarphi}_{\omega} \\
&\qquad
= \prod{\heffPi(\mm(t))}{\vvarphi}_{\omega} 
+ \frac{k}{2} \, \prod{\partial_t \big[ \heffPi(\mm(t)) \big] }{\vvarphi}_{\omega}
+ \OO(k^2).
\end{align*}
Note that $\partial_{\nn} \vv = \partial_t \partial_{\nn} \mm + \OO(k) = \OO(k)$. Therefore,
integration by parts yields that
\begin{align*}
& \prod{\heffPi(\mm(t))}{\vvarphi}_{\omega} 
+  \frac{k}{2} \prod{\partial_t \big[ \heffPi(\mm(t)) \big] }{\vvarphi}_{\omega} 
+ \OO(k^2)
\notag 
\\&
\stackrel{\eqref{eq:heffextended}}{=} -\Cex\prod{\nabla\mm(t)}{\nabla\vvarphi}_{\omega} 
+ \prod{\ff\big( t \big)}{\vvarphi}_{\omega}  
+ \prod{\ppi(\mm(t))}{\vvarphi}_{\omega} 
+ \prod{\Ppi(\mm(t))}{\vvarphi}_{\omega}
\notag 
\\&\quad  
- \frac{\Cex}{2} \, k \, \prod{\nabla\vv(t)}{\nabla\vvarphi}_{\omega} 
+ 
\frac{k}{2} \, \prod{\partial_t \ff(t)}{\vvarphi}_{\omega}
+ \frac{k}{2} \, \prod{\ppi(\vv(t))}{\vvarphi}_{\omega} 
+ \frac{k}{2} \, \prod{ \DPi{\mm(t)}{\vv(t)}}{\vvarphi}_{\omega}.
\end{align*}
Together with $\ff(t) + \frac{k}{2} \, \partial_t \ff(t) = \ff(t+k/2) + \OO(k^2)$, the latter two equations prove that
\begin{align*}
 &\langle \big( \alpha + \frac{k}{2} \lambda(\mm(t))\big) \vv(t), \vvarphi \rangle_{\omega} + 
\prod{\mm(t)\times\vv(t)}{\vvarphi}_{\omega} 
\\
 &\qquad
 = - \frac{\Cex}{2} \, k \, \prod{\nabla\vv(t)}{\nabla\vvarphi}_{\omega}
 + \prod{\ff(t+k/2)}{\vvarphi}_\omega
 + \prod{\ppi(\mm(t))}{\vvarphi}_{\omega}
 + \prod{\Ppi(\mm(t))}{\vvarphi}_{\omega}
 \\&\qquad\qquad
 + \frac{k}{2} \, \prod{\ppi(\vv(t))}{\vvarphi}_{\omega} 
 + \frac{k}{2} \, \prod{ \DPi{\mm(t)}{\vv(t)}}{\vvarphi}_{\omega}
 + \OO(k^2).
\end{align*}
It remains to replace $\big( \alpha + \frac{k}{2} \, \lambda(\mm(t))\big)$ by $\weight_{M(k)}\big(\lambda(\mm(t))\big)$, which yields an additional error of $\OO(k^2)$ (see~\eqref{eq:defBandM} and Lemma~\ref{lemma:weight}(ii)), and to add the stabilization term $\frac{\Cex}{2} \, k \, \rho(k) \, \prod{\nabla\vv}{\nabla\vvarphi}_{\omega}$, which yields an additional error of $\OO(k\rho(k))$. This proves~(i).

{\bf Step~2.}
For the proof of~{\rm (ii)}, we first observe that an implicit Euler step satisfies that
\begin{align*}
\mm(t-k) + k \, \partial_t \mm(t) = \mm(t) + \OO(k^2) \,.
\end{align*}
Together with Lemma~\ref{lemma::vformula}, we obtain that
\begin{align*}
\vv(t) \stackrel{\eqref{eq::vformula}}{=} \partial_t \mm(t) + \OO(k) 
= \frac{\mm(t)-\mm(t-k)}{k} + \OO(k) \,.
\end{align*}
Since $\boldsymbol{\pi}$ is linear and bounded, it follows that
\begin{eqnarray*}\label{eq::firstab}
\frac{k}{2}\,\ppi(\vv(t)) = \frac{1}{2}\,\ppi(\mm(t)) - \frac{1}{2}\,\ppi(\mm(t-k)) + \OO(k^2) \,.
\end{eqnarray*}
Similarly, since $\DPishort(\cdot,\cdot)$ is linear in the second argument, we also obtain that
\begin{align*}
\frac{k}{2} \, \DPi{\mm(t)}{\vv(t)} =
\frac{1}{2} \, \DPi{\mm(t)}{\mm(t)} - 
\frac{1}{2} \, \DPi{\mm(t)}{\mm(t-k)} + \OO(k^2).
\end{align*}
Combining the latter two equations with {\rm (i)}, we conclude the proof of {\rm (ii)}.

{\bf Step~3.}
Note that $|\mm| = 1$ in $\omega$ yields that
$\Delta \mm \cdot \mm =  - | \nabla \mm |^2$. In particular,
$$
 \lambda(\mm) = \big( \heff(\mm)  + \Ppi(\mm)  \big) \cdot \mm 
 = - \Cex |\nabla \mm|^2 + \big( \ff + \ppi(\mm) + \Ppi(\mm) \big) \cdot \mm.
$$
This proves~\eqref{eq:lambda:identity} and concludes the proof.
\end{proof}

\begin{remark}\label{re::rhok}
{\rm(i)} For $\rho(k) = \OO(k)$, Proposition~\ref{proposition:variational} yields that $\RR_1,\RR_2 = \OO(k^2)$ and therefore second-order accuracy of the consistency error. In this case, however, the convergence result of Theorem~\ref{thm:maintheorem} 
requires the CFL-type condition $k = \oo(h)$. Instead, the choice $\rho(k) := k |\log(k)|$ requires no CFL-type condition and leads to $\rho(k)= \OO(k^{1-\varepsilon})$ and hence to $\RR_1,\RR_2 = \OO(k^{2-\varepsilon})$ for any $\varepsilon > 0$. For details, we refer to Remark~\ref{remark:CFL} 
below.

{\rm(ii)} In the proof of Proposition~\ref{proposition:variational}, $\big( \alpha + \frac{k}{2} \, \lambda(\mm)\big)$ is replaced by $\weight_{M(k)}\big(\lambda(\mm)\big)$ to ensure ellipticity of the bilinear form $\mathbf{b}(\mm;\cdot,\cdot)$.
\end{remark}

\section{Proof of Theorem~\ref{thm:maintheorem} (Numerical integration of LLG)} 
\label{section:proof:maintheorem}

\begin{proof}[Proof of Theorem~\ref{thm:maintheorem}{\rm(i)}]
Together with the Lax--Milgram theorem, we employ a fixpoint iteration in order to solve~\eqref{eq:tps2_llg_variational}
with $\eeta_{h}^{\ell} \approx \vv_h^i$: 
Let $\eeta_h^0 := \0$. For $\ell\in\N_0$, let $\eeta_{h}^{\ell+1} \in \Kh{\mm_h^i}$ solve, for all $\vvarphi_h \in \Kh{\mm_h^i}$,
\begin{align}\label{eq:fixedpoint_llg_variational}
\begin{split}
&\prod{\weight_{M(k)}(\lambda_h^i) \, \eeta_{h}^{\ell+1}}{\vvarphi_h}_{\omega} 
+ 
\prod{\mm_h^i\times \eeta_{h}^{\ell+1}}{\vvarphi_h}_{\omega} 
+ \frac{\Cex}{2} \, k \, (1+\rho(k))\prod{\nabla\eeta_{h}^{\ell+1}}{\nabla\vvarphi_h}_{\omega} 
\\& \quad
= -\Cex\prod{\nabla\mm_h^i}{\nabla\vvarphi_h}_{\omega} 
+ \prod{\ff(t_{i+1/2})}{\vvarphi_h}_{\omega} 
+ \prod{\pih{i}{\eeta_{h}^{\ell}}{\mm_h^i}{\mm_h^{i-1}}}{\vvarphi_{h}}_{\omega} 
\\& \quad \quad \quad
+ \prod{\Pih{i}{\eeta_{h}^{\ell}}{\mm_h^i}{\mm_h^{i-1}}}{\vvarphi_h}_{\omega}  \,.
\end{split}
\end{align}
We equip $\Kh{\mm_h^i}$ with the norm $|\!|\!|\eeta_h|\!|\!|^2 := (\alpha/4) \, \norm{\eeta_h}{\omega}^2 + (\Cex/2) \, k \, (1+\rho(k)) \, \norm{\nabla \eeta_h}{\omega}^2$. 
For sufficiently small $k$, Lemma~\ref{lemma:weight}(i) and~\eqref{eq:assumption:M} prove that $\weight_{M(k)}(\cdot) \ge \alpha/2$. 
Since $\rho(k) \geq 0$, the bilinear form on the left-hand side of~\eqref{eq:fixedpoint_llg_variational} is positive definite on $\Kh{\mm_h^i}$, i.e., the fixpoint iteration is well-defined.
We subtract~\eqref{eq:fixedpoint_llg_variational} for $\eeta_h^{\ell+1}$ from~\eqref{eq:fixedpoint_llg_variational} for $\eeta_h^{\ell}$ and test with $\vvarphi_h := \eeta_h^{\ell+1} - \eeta_h^{\ell} \in \Kh{\mm_h^i}$. 
With ~\eqref{eq:pihlipschitz} and~\eqref{eq:Pihlipschitz}, we get that
\begin{align}\label{eq:uselipschitzhere}
& \frac{\alpha}{2} \, \norm{\eeta_h^{\ell+1} - \eeta_h^{\ell}}{\omega}^2 + \frac{\Cex}{2} \, k \, (1+\rho(k)) \, \norm{\nabla \eeta_h^{\ell+1} - \nabla \eeta_h^{\ell}}{\omega}^2 
\notag \\
& \quad \stackrel{\eqref{eq:fixedpoint_llg_variational}}{\leq}  
\norm{\pih{i}{\eeta_h^{\ell}}{\mm_h^i}{\mm_h^{i-1}} - \pih{i}{\eeta_h^{\ell-1}}{\mm_h^i}{\mm_h^{i-1}}}{\omega} 
\norm{\eeta_h^{\ell+1} - \eeta_h^{\ell}}{\omega} \notag \\
& \quad\quad \quad  + \norm{\Pih{i}{\eeta_h^{\ell}}{\mm_h^i}{\mm_h^{i-1}} - 
\Pih{i}{\eeta_h^{\ell-1}}{\mm_h^i}{\mm_h^{i-1}}}{\omega} \norm{\eeta_h^{\ell+1} - \eeta_h^{\ell}}{\omega}  \notag \\
& \quad \,\,\leq\,\, (\Cpi + \CPi) \,k \,\norm{\eeta_h^{\ell} - \eeta_h^{\ell-1}}{\omega} \norm{\eeta_h^{\ell+1} - \eeta_h^{\ell}}{\omega} \notag \\
& \quad\quad \quad + \CPi \, k \, \norm{\nabla(\eeta_h^{\ell} - \eeta_h^{\ell-1})}{\omega} \norm{\eeta_h^{\ell+1} - \eeta_h^{\ell}}{\omega}.
\end{align}
For arbitrary $\delta>0$, the Young inequality thus yields that
\begin{align*}
& \min\Big\{1 \, , \, 2 - \frac{1}{\alpha\delta} \, (\Cpi + 2\,\CPi) \, k \Big\}
\, |\!|\!| \eeta_h^{\ell+1} - \eeta_h^{\ell} |\!|\!|^2
\notag \\
&\quad\le
\Big[\frac{\alpha}{2} - \frac{1}{4\delta} \, (\Cpi + 2\,\CPi) \, k \Big] \, \norm{\eeta_h^{\ell+1} - \eeta_h^{\ell}}{\omega}^2 
+ \frac{\Cex}{2} \, k \, (1+\rho(k)) \, \norm{\nabla \eeta_h^{\ell+1} - \nabla \eeta_h^{\ell}}{\omega}^2 
\notag \\
&\quad\le
\delta \, (\Cpi + \CPi) \,k \,\norm{\eeta_h^{\ell} - \eeta_h^{\ell-1}}{\omega}^2 
+ \delta \, \CPi \, k \, \norm{\nabla(\eeta_h^{\ell} - \eeta_h^{\ell-1})}{\omega}^2
\notag \\
&\quad\le \delta \, \max\Big\{\frac{4}{\alpha} \, (\Cpi + \CPi) \,k \, , \, 2 \, \frac{\CPi}{\Cex} \Big\} 
\, |\!|\!| \eeta_h^{\ell} - \eeta_h^{\ell-1} |\!|\!|^2
 \quad \textrm{for all } \ell \in \N.
\end{align*}
For sufficiently small $\delta$ and $k$, the iteration is thus a contraction with respect to $|\!|\!|\cdot|\!|\!|$.
The Banach fixpoint theorem yields the existence and uniqueness of the solution $\vv_h^i \in \Kh{\mm_h^i}$ to~\eqref{eq:tps2_llg_variational}.
For all $\zz_h \in \NN_h$, it holds that $|\mm_h^i(\zz_h) + k \, \vv_h^i(\zz_h)|^2 = 1 + k^2 \, |\vv_h^i(\zz_h)|^2 \ge 1$ so that $\mm_h^{i+1} \in \Mh$ in~\eqref{alg:abtpsupdate}
is well-defined. Altogether, Algorithm~\ref{alg:abtps} is thus well-posed.
\end{proof}

\begin{lemma}\label{lemma:energy}
Under the assumptions of Theorem~\ref{thm:maintheorem}{\rm(ii)}, the following assertions {\rm (i)}--{\rm (ii)} are satisfied, if $k>0$ is sufficiently small.

{\rm (i)} For all $i = 0,\dots,N-1$, it holds that
\begin{align}\label{eq:energyestimate}
& \frac{\Cex}{2} \dtshort \norm{\nabla\mm_h^{i+1}}{\omega}^2 
+ \prod{\weight_{M(k)}(\lambda_h^i)\vv_h^i}{\vv_h^i}_{\omega} 
+ \frac{\Cex}{2} \, k \, \rho(k) \, \norm{\nabla\vv_h^i}{\omega}^2 
\notag 
\\& \quad 
\le \prod{\ff(t_{i+1/2})}{\vv_h^i}_{\omega}
+ \prod{\pih{i}{\vv_h^i}{\mm_h^i}{\mm_h^{i-1}}}{\vv_h^i}_{\omega} 
+ \prod{\Pih{i}{\vv_h^i}{\mm_h^i}{\mm_h^{i-1}}}{\vv_h^i}_{\omega} 
\end{align}

{\rm (ii)} For all $j = 0,\dots,N$, it holds that 
\begin{align}\label{eq:energyabtps}
\norm{\nabla \mm_h^j}{\omega}^2 + k\sum_{i=0}^{j-1}\norm{\vv_h^i}{\omega}^2 + k^2\rho(k)\sum_{i=0}^{j-1}\norm{\nabla\vv_h^i}{\omega}^2 \le C \,,
\end{align}
where $C>0$ only depends on $T$, $\omega$, $\alpha$, $\Cex$, $\ff$, $\ppi$, $\Ppi$, $\mm^0$, and $\Cmesh$.
\end{lemma}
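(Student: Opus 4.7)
My plan is to test the discrete variational formulation~\eqref{eq:tps2_llg_variational} with $\vvarphi_h := \vv_h^i \in \Kh{\mm_h^i}$. The cross product $\prod{\mm_h^i\times\vv_h^i}{\vv_h^i}_\omega$ vanishes pointwise, so after moving $-\Cex\prod{\nabla\mm_h^i}{\nabla\vv_h^i}_\omega$ to the left I obtain
\[
 \prod{\weight_{M(k)}(\lambda_h^i)\vv_h^i}{\vv_h^i}_\omega + \tfrac{\Cex}{2}k(1+\rho(k))\norm{\nabla\vv_h^i}{\omega}^2 + \Cex\prod{\nabla\mm_h^i}{\nabla\vv_h^i}_\omega = \text{RHS of \eqref{eq:energyestimate}}.
\]
The key step is to convert $\Cex\prod{\nabla\mm_h^i}{\nabla\vv_h^i}_\omega$ into a discrete time-derivative of $\norm{\nabla\mm_h^{i+1}}{\omega}^2$. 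Since $\Trian$ is weakly acute, the classical non-expansivity of the nodal projection onto the unit sphere in the $H^1$-seminorm yields
\[
 \norm{\nabla\mm_h^{i+1}}{\omega}^2 \le \norm{\nabla(\mm_h^i + k\vv_h^i)}{\omega}^2 = \norm{\nabla\mm_h^i}{\omega}^2 + 2k\prod{\nabla\mm_h^i}{\nabla\vv_h^i}_\omega + k^2\norm{\nabla\vv_h^i}{\omega}^2,
\]
which rearranges to $\Cex\prod{\nabla\mm_h^i}{\nabla\vv_h^i}_\omega \ge \tfrac{\Cex}{2}\dtshort\norm{\nabla\mm_h^{i+1}}{\omega}^2 - \tfrac{\Cex k}{2}\norm{\nabla\vv_h^i}{\omega}^2$. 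Substituting back, the $\tfrac{\Cex k}{2}\norm{\nabla\vv_h^i}{\omega}^2$ exactly cancels the ``$1$'' in the $1+\rho(k)$ factor, leaving~\eqref{eq:energyestimate}.

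\textbf{Part (ii).} I sum~\eqref{eq:energyestimate} over $i=0,\dots,j-1$ and telescope the discrete derivative. Lemma~\ref{lemma:weight}(i) together with $M(k)k\to0$ from~\eqref{eq:assumption:M} gives $\weight_{M(k)}(\lambda_h^i) \ge \alpha/2$ for $k$ small, so the $\vv_h^i$-term contributes at least $\tfrac{\alpha}{2}k\sum_i\norm{\vv_h^i}{\omega}^2$. The three RHS sums are handled using Cauchy--Schwarz, Young's inequality, and the stability assumptions~\eqref{eq:pihstability} and~\eqref{eq:Pihstability} applied with $\ppsi_h = \vv_h^i$, $\vvarphi_h = \mm_h^i$, $\widetilde\vvarphi_h = \mm_h^{i-1}$. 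Since $\mm_h^i\in\Mh$ satisfies $|\mm_h^i|\le1$ pointwise by convexity on each element, $\norm{\mm_h^i}{\omega}\lesssim1$ and $\norm{\mm_h^i}{\HH{1}{\omega}}^2 \lesssim 1 + \norm{\nabla\mm_h^i}{\omega}^2$; moreover $\norm{\nabla\mm_h^0}{\omega}\lesssim1$ by~\eqref{eq:initconvergence_m0}. After absorbing the smaller terms, the RHS is bounded by $C_1 + C_2\,k\sum_{i=0}^{j-1}\norm{\nabla\mm_h^i}{\omega}^2$, and a discrete Gronwall argument yields~\eqref{eq:energyabtps}.

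\textbf{Main obstacle.} The delicate point is absorbing the contributions of $\norm{\vv_h^i}{\HH{1}{\omega}}$ arising from~\eqref{eq:Pihstability}. The critical cross term is $\CPi k^2 \norm{\vv_h^i}{\omega}\norm{\nabla\vv_h^i}{\omega}$, which Young bounds by
\[
 \CPi k^2 \norm{\vv_h^i}{\omega}\norm{\nabla\vv_h^i}{\omega} \le \tfrac{\Cex k^2\rho(k)}{4}\norm{\nabla\vv_h^i}{\omega}^2 + \tfrac{\CPi^2 k^2}{\Cex\rho(k)}\norm{\vv_h^i}{\omega}^2.
\]
The first summand is absorbed into the $\tfrac{\Cex}{2}k^2\rho(k)\norm{\nabla\vv_h^i}{\omega}^2$ stabilization term on the LHS. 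For the second, note that $\tfrac{\CPi^2 k^2}{\Cex\rho(k)} = \tfrac{\CPi^2}{\Cex}\cdot k\cdot\tfrac{k}{\rho(k)}\to0$ by~\eqref{eq:assumption:rho} (which states $\rho(k)/k\to\infty$), so for $k$ sufficiently small this prefactor is dominated by $\alpha/4$ and the contribution is absorbed into $\tfrac{\alpha}{2}k\sum_i\norm{\vv_h^i}{\omega}^2$. This is precisely where the stabilization $\rho(k)$ is essential: without it, the same absorption would force a CFL-type coupling $k=\oo(h)$ together with an inverse estimate on $\norm{\nabla\vv_h^i}{\omega}$, in agreement with Remark~\ref{remarks:llg}(v).
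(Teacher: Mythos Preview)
Your proposal is correct and follows essentially the same approach as the paper: Part~(i) tests~\eqref{eq:tps2_llg_variational} with $\vv_h^i$, uses the weak-acuteness estimate $\norm{\nabla\mm_h^{i+1}}{\omega}^2 \le \norm{\nabla(\mm_h^i+k\vv_h^i)}{\omega}^2$, and cancels the $\tfrac{\Cex k}{2}\norm{\nabla\vv_h^i}{\omega}^2$ term; Part~(ii) sums, uses $\weight_{M(k)}\ge\alpha/2$, the stability bounds~\eqref{eq:pihstability}--\eqref{eq:Pihstability}, the Young splitting of the cross term $k^2\norm{\vv_h^i}{\omega}\norm{\nabla\vv_h^i}{\omega}$ with weight $\rho(k)$ (exploiting $k/\rho(k)\to0$), and the discrete Gronwall lemma. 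Your identification of the role of $\rho(k)$ and the CFL alternative matches the paper's Remark~\ref{remark:CFL}.
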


\begin{proof}
Testing \eqref{eq:tps2_llg_variational} with $\vvarphi_h := \vv_h^i \in \Kh{\mm_h^i}$, we see that
\begin{align}\label{eq:abstpstimesv}
&\prod{\weight_{M(k)}(\lambda_h^i)\vv_h^i}{\vv_h^i}_{\omega} +
\frac{\Cex}{2} \, k \, \norm{\nabla\vv_h^i}{\omega}^2 + 
\frac{\Cex}{2} \,  k \rho(k) \, \norm{\nabla\vv_h^i}{\omega}^2 \notag \\
&= -\Cex\prod{\nabla\mm_h^i}{\nabla\vv_h^i}_{\omega} 
+ \prod{\ff(t_{i+1/2})}{\vv_h^i}_{\omega} 
\\& \qquad 
+ \prod{\pih{i}{\vv_h^i}{\mm_h^i}{\mm_h^{i-1}}}{\vv_h^i}_{\omega} \notag 
+ \prod{\Pih{i}{\vv_h^i}{\mm_h^i}{\mm_h^{i-1}}}{\vv_h^i}_{\omega}.
\end{align}
Since $\Trian$ is weakly acute, \cite[Lemma 3.2]{bartels2005} provides the estimate
\begin{align*}
\begin{split}
\norm{\nabla\mm_h^{i+1}}{\omega}^2 
\le \norm{\nabla ( \mm_h^i + k\vv_h^i )}{\omega}^2
&= \norm{\nabla\mm_h^i}{\omega}^2 + 2k \, \prod{\nabla\mm_h^i}{\nabla\vv_h^i}_{\omega} 
+ k^2\norm{\nabla\vv_h^i}{\omega}^2.
\end{split}
\end{align*}
Rearranging this estimate and multiplying it by $\Cex/(2k)$, we derive that
\begin{align}\label{eq:mn1mkv2}
\frac{\Cex}{2} \dtshort \norm{\nabla\mm_h^{i+1}}{\omega}^2 
- \frac{\Cex}{2} \, k \, \norm{\nabla\vv_h^i}{\omega}^2
\le
\Cex\prod{\nabla\mm_h^i}{\nabla\vv_h^i}_{\omega}.
\end{align}
Adding \eqref{eq:abstpstimesv} and \eqref{eq:mn1mkv2}, we prove {\rm (i)}. 

To prove {\rm (ii)}, we sum~\eqref{eq:energyestimate} over $i=0,\dots,j-1$ and multiply by $k$. For $k$ sufficiently small, we have $\weight_{M(k)}(\cdot) \geq \alpha/2 > 0$ (cf. Lemma~\ref{lemma:weight}{\rm (i)}) and altogether get that
\begin{align}\label{eq:discreteenergy_step1}
\begin{split}
{{\rm \chi}_j}  
&:=  \frac{\Cex}{2} \, \norm{ \nabla \mm_h^j }{\omega}^2 
+ \frac{\alpha}{2} \, k \sum_{i=0}^{j-1} \norm{ \vv_h^i }{\omega}^2 
+ \frac{\Cex}{2} \, k^2\rho(k)\sum_{i=0}^{j-1}\norm{\nabla\vv_h^i}{\omega}^2 
\\& 
\stackrel{\eqref{eq:energyestimate}}{\le} 
\frac{\Cex}{2} \, \norm{ \nabla \mm_h^0 }{\omega}^2 
+ k \sum_{i=0}^{j-1} \prod{\ff(t_{i+1/2})}{\vv_h^i}_{\omega} 
+ k \sum_{i=0}^{j-1} \prod{\pih{i}{\vv_h^i}{\mm_h^i}{\mm_h^{i-1}}}{\vv_h^i}_{\omega} 
\\& \quad 
+ k \sum_{i=0}^{j-1} \prod{\Pih{i}{\vv_h^i}{\mm_h^{i}}{\mm_h^{i-1}}}{\vv_h^i}_{\omega}.
\end{split}
\end{align}
Let $\delta > 0$. The Young inequality proves that
\begin{align}\label{dpr:llg1}
k^2 \sum_{i=0}^{j-1} \norm{ \nabla \vv_h^i }{\omega} \norm{ \vv_h^i }{\omega} \lesssim
\delta k^2 \rho(k) \sum_{i=0}^{j-1} \norm{ \nabla \vv_h^i }{\omega}^2 + 
\frac{k^2}{\delta \rho(k)} \,
\sum_{i=0}^{j-1} \norm{\vv_h^i }{\omega}^2.
\end{align}
Together with~\eqref{eq:pihstability} and~\eqref{eq:Pihstability}, further applications of the Young inequality prove that
\begin{align*}
\chi_j
&\stackrel{\eqref{eq:discreteenergy_step1}}\lesssim
\norm{ \nabla \mm_h^0 }{\omega}^2 
+ k \sum_{i=0}^{j-1} \norm{\ff(t_{i+1/2)}}{\omega}\norm{ \vv_h^i }{\omega}
+  k \sum_{i=0}^{j-1} \Big( k \, \norm{\vv_h^i}{\omega} 
\!+\! \norm{\mm_h^i}{\omega} 
\!+\! \norm{\mm_h^{i-1}}{\omega} \Big) \norm{\vv_h^i}{\omega} 
\\& \quad 
+ k \sum_{i=0}^{j-1} \Big( k \, \norm{\vv_h^i}{\HH{1}{\omega}}
+ \norm{\mm_h^i}{\HH{1}{\omega}} 
+ \norm{\mm_h^{i-1}}{\HH{1}{\omega}} \Big) \norm{\vv_h^i}{\omega}
\\&
\stackrel{\eqref{dpr:llg1}}{\lesssim}
\norm{ \nabla \mm_h^0 }{\omega}^2 
+ \frac{k}{\delta} \sum_{i=0}^{j-1} \norm{\ff(t_{i+1/2})}{\omega}^2 
+ \frac{k}{\delta} \sum_{i=0}^{j-1} \norm{ \mm_h^i }{\omega}^2
+ \frac{k}{\delta} \sum_{i=0}^{j-1} \norm{ \nabla \mm_h^i }{\omega}^2
\\& \quad 
+ \Big(\delta + k + \frac{k}{\delta \rho(k)}\Big) \, k \, \sum_{i=0}^{j-1} \norm{ \vv_h^i }{\omega}^2 
+ \delta \, k^2\rho(k) \, \sum_{i=0}^{j-1} \norm{ \nabla \vv_h^i }{\omega}^2.
\end{align*}
Since $k/\rho(k)\to0$ as $k\to0$, we choose $\delta>0$ sufficiently small and can absorb the sums $k \sum_{i=0}^{j-1} \norm{ \vv_h^i }{\omega}^2$ and $k^2 \rho(k) \sum_{i=0}^{j-1} \norm{ \nabla \vv_h^i }{\omega}^{2}$ in $\chi_j$.
Using that $\ff \in C^1([0,T],\LL{2}{\omega})$, we altogether
arrive at the estimate
\begin{align}
{{\rm \chi}_j}  
\lesssim  
\norm{ \nabla \mm_h^0 }{\omega}^2 
+ k \sum_{i=0}^{j-1} \norm{\ff(t_{i+1/2})}{\omega}^2 
+ k\sum_{i=0}^{j-1} \norm{ \mm_h^i }{\omega}^2
+ k \sum_{i=0}^{j-1} \norm{\nabla \mm_h^i}{\omega}^2 
\lesssim 
1 + k \sum_{i=0}^{j-1} {{\rm \chi_i}} . \notag
\end{align}
This fits in the setting of the discrete Gronwall lemma (cf.\ \cite[Lemma 1.4.2]{qv1994}), i.e.,
\begin{align}
{{\rm \chi}_j} \lesssim \alpha_0 + \beta \sum_{i=0}^{j-1} {{\rm \chi}_i} \notag \quad \textrm{with } \alpha_0 > 0 \textrm{ and } \beta \simeq k.
\end{align}
We obtain that
\begin{align}\label{eq:discrete6}
{\rm \chi}_j  \lesssim \alpha_0 \exp \left( \sum_{i=0}^{j-1} \beta \right) \lesssim \exp(T).
\end{align}
This concludes the proof.
\end{proof}

\begin{lemma}\label{lemma:extractsubsequences}
Under the assumptions of Theorem~\ref{thm:maintheorem}{\rm(ii)}, consider the postprocessed output~\eqref{eq:discretefunctions} of Algorithm~\ref{alg:abtps}.
Then, there exists $\mm\in\HH{1}{\omega_T}\cap\L{\infty}{0,T;\HH{1}{\omega}}$ as well as a subsequence of each $\mm_{hk}^{\star} \in \{\mm_{hk}, \mm_{hk}^{=} , \mm_{hk}^-, \mm_{hk}^+\}$ and of $\vv_{hk}^-$ such that the following convergence statements~\ref{item:weakly1}--\ref{item:weakly7} hold true simultaneously for the same subsequence as $h,k\to0$:
\begin{enumerate}[label=\rm{(\roman*)}]
\item \label{item:weakly1} $\mm_{hk} \rightharpoonup \mm$ in $\HH{1}{\omega_T}$,
\item \label{item:weakly2} $\mm_{hk}^{\star} \stackrel{*}\rightharpoonup \mm$ in $\L{\infty}{0,T;\HH{1}{\omega}}$,
\item \label{item:weakly2b} $\mm_{hk}^{\star} \rightharpoonup \mm$ in $\L{2}{0,T;\HH{1}{\omega}}$,
\item \label{item:weakly3} $\mm_{hk}^{\star} \to \mm$ in $\LL{2}{\omega_T}$,
\item \label{item:weakly4} $\mm_{hk}^{\star} (t) \rightarrow \mm(t)$ in $\LL{2}{\omega}$ for $t \in (0,T)$ a.e.,
\item \label{item:weakly5} $\mm_{hk}^{\star} \rightarrow \mm$ pointwise a.e.\ in $\omega_T$,
\item \label{item:weakly6} $\vv_{hk}^- \rightharpoonup \partial_t \mm$ in $\LL{2}{\omega_T}$,
\item \label{item:weakly7} $k \, \nabla \vv_{hk}^- \rightarrow 0$ in $\LL{2}{\omega_T}$.
\end{enumerate}
\end{lemma}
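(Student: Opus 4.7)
The plan is to translate the discrete energy estimate~\eqref{eq:energyabtps} of Lemma~\ref{lemma:energy} into uniform norm bounds on the postprocessed sequences, extract subsequences via standard weak, weak-$*$ and compactness arguments, and then identify all limits with a single limit $\mm$. Since $|\mm_h^i(\zz_h)|=1$ at all nodes, the quasi-uniformity of $\Trian$ together with the nodal mass-lumping equivalence $\norm{\ppsi_h}{\omega}^2\simeq h^3\sum_{\zz_h}|\ppsi_h(\zz_h)|^2$ yields $\norm{\mm_h^i}{\omega}\lesssim 1$; combined with the $\HH{1}{\omega}$-seminorm bound from~\eqref{eq:energyabtps}, each variant $\mm_{hk}^\star$ is uniformly bounded in $\L{\infty}{0,T;\HH{1}{\omega}}$, while~\eqref{eq:energyabtps} bounds $\vv_{hk}^-$ in $\LL{2}{\omega_T}$. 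The remaining bound in~\eqref{eq:energyabtps} together with the stabilization property $k/\rho(k)\to 0$ from~\eqref{eq:assumption:rho} gives $\norm{k\nabla\vv_{hk}^-}{\LL{2}{\omega_T}}^2\lesssim k/\rho(k)\to 0$, which is~\ref{item:weakly7}.

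\textbf{Key nodal comparison.} The technical core is a pointwise comparison of $\vv_h^i$ with $\dt{\mm_h}{i}$ via the projection~\eqref{alg:abtpsupdate}. Setting $\aa:=\mm_h^i(\zz_h)$, $\bb:=k\vv_h^i(\zz_h)$, the orthogonality $\aa\cdot\bb=0$ gives $|\aa+\bb|=\sqrt{1+|\bb|^2}$, and a direct calculation produces
\begin{equation*}
 \dt{\mm_h}{i}(\zz_h)-\vv_h^i(\zz_h) \;=\; \frac{\aa+\bb}{k}\Big(\frac{1}{\sqrt{1+|\bb|^2}}-1\Big),
\end{equation*}
whence $|\dt{\mm_h}{i}(\zz_h)|\le |\vv_h^i(\zz_h)|$ and $|\dt{\mm_h}{i}(\zz_h)-\vv_h^i(\zz_h)|\le\tfrac{k}{2}|\vv_h^i(\zz_h)|^2$. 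Transferring these pointwise bounds via the mass-lumping equivalence yields $\norm{\partial_t\mm_{hk}}{\LL{2}{\omega_T}}\lesssim\norm{\vv_{hk}^-}{\LL{2}{\omega_T}}\lesssim 1$ and $\norm{\partial_t\mm_{hk}-\vv_{hk}^-}{\LL{1}{\omega_T}}\lesssim k\,\norm{\vv_{hk}^-}{\LL{2}{\omega_T}}^2\to 0$. In particular, $\mm_{hk}$ is uniformly bounded in $\HH{1}{\omega_T}$.

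\textbf{Compactness.} From these bounds, the Banach--Alaoglu theorem and Rellich--Kondrachov compactness produce a subsequence and a limit $\mm\in\HH{1}{\omega_T}\cap\L{\infty}{0,T;\HH{1}{\omega}}$ with $\mm_{hk}\rightharpoonup\mm$ in $\HH{1}{\omega_T}$ and weak-$*$ in $\L{\infty}{0,T;\HH{1}{\omega}}$, strongly in $\LL{2}{\omega_T}$ and, after further extraction, pointwise a.e.\ in $\omega_T$ and in $\LL{2}{\omega}$ for a.e.\ $t\in(0,T)$. To propagate these convergences to the three piecewise-constant variants, I use the identity $\mm_{hk}^-(t)-\mm_{hk}(t)=-(t-t_i)\dt{\mm_h}{i}$ on $[t_i,t_{i+1})$ (and analogous formulas for $\mm_{hk}^+$ and $\mm_{hk}^=$); the $\LL{2}{\omega_T}$-bound on $\partial_t\mm_{hk}$ then gives $\norm{\mm_{hk}^\star-\mm_{hk}}{\LL{2}{\omega_T}}\lesssim k\to 0$, so that~\ref{item:weakly1}--\ref{item:weakly5} hold for every $\mm_{hk}^\star$ with the same limit $\mm$ (a final diagonal extraction delivers pointwise a.e.\ convergence for all variants simultaneously).

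\textbf{Identification of $\partial_t\mm$ and main obstacle.} A further extraction from $\LL{2}{\omega_T}$-bounded $\vv_{hk}^-$ yields $\vv_{hk}^-\rightharpoonup\vv^*$ in $\LL{2}{\omega_T}$. The weak $\HH{1}{\omega_T}$-convergence $\mm_{hk}\rightharpoonup\mm$ yields $\partial_t\mm_{hk}\rightharpoonup\partial_t\mm$ in $\LL{2}{\omega_T}$; combined with the $\LL{1}{\omega_T}$-vanishing $\vv_{hk}^--\partial_t\mm_{hk}\to 0$ and the uniform $\LL{2}{\omega_T}$-bounds, the two weak limits must coincide, so $\vv^*=\partial_t\mm$ and~\ref{item:weakly6} follows. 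The principal obstacle is the nodal comparison in the second paragraph: by controlling the projection defect only in $\LL{1}{\omega_T}$ rather than in $\LL{2}{\omega_T}$, one avoids any CFL coupling between $h$ and $k$, in agreement with the unconditional convergence claim of Theorem~\ref{thm:maintheorem}.
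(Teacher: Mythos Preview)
Your proposal is correct and follows essentially the same route as the paper: the paper's proof merely invokes Lemma~\ref{lemma:energy} for the uniform bounds, refers to~\cite{alouges2008a,bffgpprs2014} for items~\ref{item:weakly1}--\ref{item:weakly6}, and proves~\ref{item:weakly7} exactly as you do via $k^2\norm{\nabla\vv_{hk}^-}{\LL{2}{\omega_T}}^2\lesssim k/\rho(k)\to 0$. Your nodal comparison and the $\LL{1}{\omega_T}$-identification of $\partial_t\mm$ with the weak limit of $\vv_{hk}^-$ are precisely the details the paper leaves to those citations (and which reappear later in the paper as~\eqref{eq:auxiliary_estimate2}--\eqref{eq:auxiliary_estimate3}).
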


\begin{proof}
Lemma~\ref{lemma:energy} yields uniform boundedness of $\mm_{hk}$ in $\HH{1}{\omega_T}$ and of $\mm_{hk}^\star \in \L{\infty}{0,T;\linebreak\HH{1}{\omega}}$. Therefore, \ref{item:weakly1}--\ref{item:weakly6} follow as in~\cite{alouges2008a,bffgpprs2014}. Lemma~\ref{lemma:energy}(ii) yields that
\begin{align}\label{dpr:llg2}
k^2\norm{\nabla\vv_{hk}^-}{\LL{2}{\omega_T}}^2 = 
k \rho(k)^{-1} \rho(k) k^2 \sum_{i=0}^{N-1} \norm{\nabla\vv_{h}^i}{\omega}^2 \stackrel{\eqref{eq:energyabtps}}\lesssim k \rho(k)^{-1} 
\stackrel{\eqref{eq:assumption:rho}}{\longrightarrow} 0 \quad\text{ as } h,k\to 0.
\end{align}
This proves~(viii) and concludes the proof.
\end{proof}

\begin{remark}\label{remark:CFL}
Under the CFL-type condition $k = \oo(h)$, one may choose $\rho(k)\equiv0$ and hence violate~\eqref{eq:assumption:rho}. To see this, note that~\eqref{eq:assumption:rho} is only used for the proof of~\eqref{dpr:llg1} and~\eqref{dpr:llg2}. An inverse inequality yields that
\begin{align*}
k^2 \sum_{i=0}^{j-1} \norm{ \nabla \vv_h^i }{\omega} \norm{ \vv_h^i }{\omega} \lesssim 
(kh^{-1}) \, k \sum_{i=0}^{j-1} \norm{ \vv_h^i }{\omega}^2,
\end{align*}
where $k/h \to 0$ as $k\to0$. Similarly,
\begin{align*}
k^2 \, \norm{\nabla\vv_{hk}^-}{\LL{2}{\omega_T}}^2 = k^3 \sum_{i=0}^{N-1} \norm{\nabla\vv_{h}^i}{\omega}^2 \lesssim 
k^3 h^{-2} \sum_{i=0}^{N-1} \norm{\vv_{h}^i}{\omega}^2 \stackrel{\eqref{eq:energyabtps}}\lesssim k^2h^{-2} \longrightarrow 0 \quad\text{ as } h,k\to 0.
\end{align*}
Therefore, Lemma~\ref{lemma:energy} as well as Lemma~\ref{lemma:extractsubsequences} (and hence also Theorem~\ref{thm:maintheorem}) remain valid.\qed
\end{remark}

\begin{proof}[Proof of Theorem~\ref{thm:maintheorem}{\rm(ii)}]
We verify that $\mm\in\HH{1}{\omega_T}\cap\L{\infty}{0,T;\HH{1}{\omega}}$ from Lemma~\ref{lemma:extractsubsequences} satisfies Definition~\ref{def::weaksolution}\ref{item:weak_llg1}--\ref{item:weak_llg2}. The modulus constraint $|\mm|=1$ a.e.\ in $\omega_T$ as well as $\mm(0) = \mm^0$ in the sense of traces follow as in~\cite{alouges2008a,bffgpprs2014}. 
Hence, $\mm$ satisfies Definition~\ref{def::weaksolution}\ref{item:weak_llg1}--\ref{item:weak_llg3}.

It remains to prove that $\mm$ from Lemma~\ref{lemma:extractsubsequences} satisfies the variational formulation~\eqref{eq:variational_llg} from Definition \ref{def::weaksolution}\ref{item:weak_llg2}: To that end, let $\vvarphi\in\boldsymbol{C}^{\infty}(\overline{\omega_T})$.
Let $\II_{h}:\boldsymbol{C}(\overline{\omega}) \to \Vh$ be the (vector-valued) nodal interpolation operator.
For $t \in [t_i,t_{i+1})$ and $i=0,\dots,N-1$, we test~\eqref{eq:tps2_llg_variational} with $\II_{h}\big( \mm_h^i \times \vvarphi(t) \big) \in \Kh{\mm_h^i}$ and integrate over time. With the definition of the postprocessed output~\eqref{eq:discretefunctions} and $\overline{\ff}_k(t) := \ff(t_{i+1/2})$ for $t \in [t_i,t_{i+1})$, we obtain that
\begin{align}\label{eq:testedabtpsformulation}
& I_{hk}^1  + I_{hk}^2  + \frac{\Cex}{2} \, I_{hk}^3  
\notag 
\\&
:= \int_0^T\prod{\weight_{M(k)}(\lambda_{hk}^-)\vv_{hk}^-}{\II_{h}(\mm_{hk}^-\times\vvarphi)}_{\omega} \d{t} 
+ \int_0^T\prod{\mm_{hk}^-\times\vv_{hk}^-}{\II_{h}(\mm_{hk}^-\times\vvarphi)}_{\omega} \d{t} 
\notag 
\\& \quad 
+ \frac{\Cex}{2} \, k \, (1+\rho(k))\int_0^T\prod{\nabla\vv_{hk}^-}{\nabla\II_{h}(\mm_{hk}^-\times\vvarphi)}_{\omega} \d{t} 
\notag 
\\&  
\stackrel{\eqref{eq:tps2_llg_variational}}{=}
-\Cex\int_0^T\prod{\nabla\mm_{hk}^-}{\nabla\II_{h}(\mm_{hk}^-\times\vvarphi)}_{\omega} \d{t}
+ \int_0^T\prod{\ppi_{hk}^-(\vv_{hk}^-;\mm_{hk}^-,\mm_{hk}^=)}{\II_{h}(\mm_{hk}^-\times\vvarphi)}_{\omega} \d{t} 
\notag 
\\& \quad 
+ \int_0^T\prod{\overline{\ff}_{k}}{\II_{h}(\mm_{hk}^-\times\vvarphi)}_{\omega} \d{t} 
+ \int_0^T\prod{\Ppi_{hk}^-(\vv_{hk}^-;\mm_{hk}^-,\mm_{hk}^=)}{\II_{h}(\mm_{hk}^-\times\vvarphi)}_{\omega} \d{t} 
\notag 
\\& 
=: - \Cex I_{hk}^4  + I_{hk}^5  + I_{hk}^6  + I_{hk}^7 .
\end{align}
In the following, we prove convergence of the integrals from~\eqref{eq:testedabtpsformulation} towards their continuous counterparts in the variational formulation~\eqref{eq:variational_llg}: To this end,  recall the approximation properties of the nodal interpolation operator $\II_{h}$ and note that $\overline{\ff}_k\to\ff$ in $C([0,T];\LL{2}{\Omega})$.
With Lemma~\ref{lemma:extractsubsequences}, we get as in~\cite{alouges2008a,bffgpprs2014} that
\begin{align*}
I_{hk}^2  & \longrightarrow \int_0^T\prod{\mm\times\partial_t \mm}{\mm\times\vvarphi}_{\omega} \d{t} = 
\int_0^T\prod{\partial_t \mm}{\vvarphi}_{\omega} \d{t}  
&&\text{as $h,k\to0$},
\\
I_{hk}^4  & \longrightarrow
- \int_0^T\prod{\mm\times\nabla\mm}{\nabla\vvarphi}_{\omega} \d{t}
&&\text{as $h,k\to0$},
\\
I_{hk}^6  & \longrightarrow \int_0^T\prod{\ff}{\mm\times\vvarphi}_{\omega} \d{t}
&&\text{as $h,k\to0$}.
\end{align*}
Since $M(k)k \to 0$, Lemma~\ref{lemma:weight}(i) yields that $\weight_{M(k)}(\lambda_{hk}^-) \rightarrow \alpha$ in $\L{\infty}{\omega_T}$. Together with the assumption $\rho(k) \rightarrow 0$ as $k\to0$, we get as in~\cite{akst2014} that
\begin{align}
I_{hk}^1  & \longrightarrow \alpha \int_0^T\prod{\partial_t \mm}{\mm\times\vvarphi}_{\omega} \d{t} 
\quad \textrm{and} \quad I_{hk}^3  \rightarrow 0
\quad\text{as $h,k\to 0$}.
\end{align} 
With the assumptions~\eqref{eq:pihkconvergence} and~\eqref{eq:Pihkconvergence} on $\boldsymbol{\pi}_{hk}^-$ and $\boldsymbol{\Pi}_{hk}^-$, respectively, we conclude that
\begin{align}
I_{hk}^5  
&=
\int_0^T\prod{\ppi_{hk}^-(\vv_{hk}^-;\mm_{hk}^-,\mm_{hk}^=)}{\II_{h}(\mm_{hk}^-\times\vvarphi)}_{\omega} \d{t} \stackrel{\eqref{eq:pihkconvergence}}{\longrightarrow} \int_0^T\prod{\ppi(\mm)}{\mm\times\vvarphi}_{\omega} \d{t}, 
\notag \\
\label{eq:useconsistencyhere}
I_{hk}^7 
&= 
\int_0^T\prod{\Ppi_{hk}^-(\vv_{hk}^-;\mm_{hk}^-,\mm_{hk}^=)}{\II_{h}(\mm_{hk}^-\times\vvarphi)}_{\omega} \d{t} \stackrel{\eqref{eq:Pihkconvergence}}{\longrightarrow} \int_0^T\prod{\Ppi(\mm)}{\mm\times\vvarphi}_{\omega} \d{t}, 
\end{align}
as $h,k \rightarrow 0$. Altogether, $\mm$ from Lemma~\ref{lemma:extractsubsequences} satisfies the variational formulation~\eqref{eq:variational_llg}. 
\end{proof}

\begin{proof}[Proof of Theorem~\ref{thm:maintheorem}{\rm(iii)}]
It remains to verify that $\mm$ from Lemma~\ref{lemma:extractsubsequences} satisfies the energy estimate of Definition~\ref{def::weaksolution}\ref{item:weak_llg4}: To that end, let $\tau\in(0,T)$ be arbitrary and $j \in \{1,\dots,N\}$ such that $\tau \in [t_{j-1},t_{j})$. Besides the shorthand notation $\ff^j := \ff(t_j)$, define the time reconstructions $\ff_k$ and $\overline{\ff}_k$ according to~\eqref{eq:discretefunctions}.
For any $i = 0,\dots, j-1$, Lemma~\ref{lemma:energy}{\rm (i)} shows that
\begin{eqnarray}\label{eq:energy_strong_base}
&& \hspace*{-1.5cm}
\mathcal{E}_{\textrm{LLG}}(\mm_{hk}(t_{i+1}))
- \mathcal{E}_{\textrm{LLG}}(\mm_{hk}(t_{i})) \notag \\
&\stackrel{\eqref{eq:energy_LLG}}{=} &
 \frac{\Cex}{2} \, \norm{\nabla\mm_h^{i+1}}{\omega}^2 - \frac{\Cex}{2} \, \norm{\nabla\mm_h^i}{\omega}^2 \notag \\
&& \quad
 -\frac{1}{2} \, \prod{\ppi(\mm_h^{i+1})}{\mm_h^{i+1}}_{\omega} + \frac{1}{2} \, \prod{\ppi(\mm_h^i)}{\mm_h^i}_{\omega} - \prod{\ff^{i+1}}{\mm_h^{i+1}}_{\omega} + \prod{\ff^i}{\mm_h^i}_{\omega} \notag \\
&\stackrel{\eqref{eq:energyestimate}}{\le}&
- k \, \prod{\weight_{M(k)}(\lambda_h^i)\vv_h^i}{\vv_h^i}_{\omega} - \frac{\Cex}{2} \,  k^2 \rho(k) \, \norm{\nabla\vv_h^i}{\omega}^2 + k \, \prod{\Ppi_h^i(\vv_h^i;\mm_h^i,\mm_h^{i-1})}{\vv_h^i}_{\omega} \notag \\
&&
\quad +
\underbrace{ k \, \prod{\ppi_h^i(\vv_h^i;\mm_h^i,\mm_h^{i-1})}{\vv_h^i}_{\omega}
-\frac{1}{2} \, \prod{\ppi(\mm_h^{i+1})}{\mm_h^{i+1}}_{\omega} + 
\frac{1}{2} \, \prod{\ppi(\mm_h^i)}{\mm_h^i}_{\omega}}_{=:T_{\ppi}} \notag \\
&&
\quad + 
\underbrace{ k \, \prod{\ff(t_{i+1/2})}{\vv_h^i}_{\omega} - \prod{\ff^{i+1}}{\mm_h^{i+1}}_{\omega} + \prod{\ff^i}{\mm_h^i}_{\omega}}_{=:T_{\ff}} \,.
\end{eqnarray}
Since $\ppi$ is linear and self-adjoint, we obtain that
\begin{align}\label{eq:energystrong1}
T_{\ppi} &= -\frac{1}{2} \, \prod{\ppi(\mm_h^{i+1})}{\mm_h^{i+1}}_{\omega} + 
\frac{1}{2} \, \prod{\ppi(\mm_h^i)}{\mm_h^i}_{\omega} + 
k \, \prod{\ppi_h^i(\vv_h^i;\mm_h^i,\mm_h^{i-1})}{\vv_h^i}_{\omega} \notag \\
&= k \, \prod{\ppi_h^i(\vv_h^i;\mm_h^i,\mm_h^{i-1}) - \ppi(\mm_h^i)}{\vv_h^i}_{\omega} 
-\prod{\ppi(\mm_h^i)}{\mm_h^{i+1}-\mm_h^i-k\vv_h^i} \notag \\
&\quad -\frac{1}{2} \, \prod{\ppi(\mm_h^{i+1}-\mm_h^i)}{\mm_h^{i+1}-\mm_h^i}_{\omega} .
\end{align}
Similarly, it holds that 
\begin{align*}
T_{\ff} 
&= k \, \prod{\ff(t_{i+1/2})}{\vv_h^i}_{\omega} 
- k \, \prod{\dtshort \ff^{i+1}}{\mm_h^i}_{\omega}  
- \prod{\ff^{i+1}}{\mm_h^{i+1}-\mm_h^i}_{\omega} 
\\& 
= k \, \prod{\ff(t_{i+1/2})-\ff^{i+1}}{\vv_h^i}_{\omega} 
- k \, \prod{\dtshort \ff^{i+1}}{\mm_h^i}_{\omega}  
- \prod{\ff^{i+1}}{\mm_h^{i+1}-\mm_h^i-k\vv_h^i}_{\omega} 
\\&  
=:  T_{\ff,1} - k \, \prod{\dtshort \ff^{i+1}}{\mm_h^i}_{\omega} + T_{\ff,2}.
\end{align*}
Elementary calculations (see, e.g.,~\cite[Lemma~3.3.2]{goldenits2012}) show that
\begin{align*}
 |\mm_h^{i+1}(\zz_h) - \mm_h^i(\zz_h)| 
 \le k \, |\vv_h^i(\zz_h)|
 \quad \text{for all } \zz_h \in \NN_h.
\end{align*}
A scaling argument thus proves that
\begin{align}\label{eq:auxiliary_estimate2}
 k \, \norm{\dtshort \mm_h^{i+1}}{\LL{2}{\omega}} 
 = \norm{\mm_h^{i+1} - \mm_h^i}{\LL{2}{\omega}}
 \lesssim k \, \norm{\vv_h^i}{\LL{2}{\omega}}.
\end{align}
Elementary calculations (see, e.g.,~\cite[eq.~(22)]{aj2006} or~\cite[Lemma~3.3.3]{goldenits2012}) show that
\begin{align*}
 |\mm_h^{i+1}(\zz_h) - \mm_h^i(\zz_h) - k \, \vv_h^i(\zz_h)| 
 \le \frac{k^2}{2} \, |\vv_h^i(\zz_h)|^2
 \quad \text{for all } \zz_h \in \NN_h.
\end{align*}
A scaling argument thus proves that
\begin{align}\label{eq:auxiliary_estimate3}
 k \, \norm{\dtshort \mm_h^{i+1} - \vv_h^i}{\LL{p}{\omega}} 
 = \norm{\mm_h^{i+1} - \mm_h^i - k \, \vv_h^i}{\LL{p}{\omega}}
 \lesssim k^2 \, \norm{\vv_h^i}{\LL{{2p}}{\omega}}^2
 \text{ for } 1 \le p < \infty.
\end{align}%
The Sobolev embedding $\HH{1}{\omega} \subset \LL{6}{\omega}$ yields that
\begin{align*}
 \norm{\vv_h^i}{\LL{3}{\omega}}^2
 \le \norm{\vv_h^i}{\LL{2}{\omega}} \, \norm{\vv_h^i}{\LL{6}{\omega}}
 \lesssim \norm{\vv_h^i}{\LL{2}{\omega}} \, \norm{\vv_h^i}{\HH{1}{\omega}}
 \le \norm{\vv_h^i}{\omega}^2 + \norm{\vv_h^i}{\omega} 
 \, \norm{\nabla\vv_h^i}{\omega}.
\end{align*}
Therefore, we obtain that
\begin{align*}
\norm{\mm_h^{i+1} - \mm_h^i - k \vv_h^i}{\LL{{3/2}}{\omega}} 
\stackrel{\eqref{eq:auxiliary_estimate3}}\lesssim k^2 \, \norm{\vv_h^i}{\LL{3}{\omega}}^2
\lesssim
k^2 \, \norm{\vv_h^i}{\omega} \norm{\nabla \vv_h^i}{\omega} +  k^2 \, \norm{\vv_h^i}{\omega}^2.
\end{align*}%
With the stronger boundedness~\eqref{eq:strongerboundedness} of $\ppi$ and the Hölder inequality, we derive that
\begin{align}\label{eq:energystrong6}
\begin{split}
 |T_{\ppi}| 
 & \stackrel{\eqref{eq:energystrong1}}{\lesssim} 
 k \, | \prod{\ppi_h^i(\vv_h^i;\mm_h^i,\mm_h^{i-1}) - \ppi(\mm_h^i)}{\vv_h^i}_{\omega} | 
\\& \quad 
 + \norm{\ppi(\mm_h^i)}{\LL{3}{\omega}} \, \norm{\mm_h^{i+1}-\mm_h^i-k\vv_h^i}{\LL{{3/2}}{\omega}} 
\\& \quad 
+ \norm{\ppi(\mm_h^{i+1} - \mm_h^i)}{\omega} \, \norm{\mm_h^{i+1} - \mm_h^i}{\omega} 
\\& 
\,\lesssim\, 
k \, | \prod{\ppi_h^i(\vv_h^i;\mm_h^i,\mm_h^{i-1}) - \ppi(\mm_h^i)}{\vv_h^i}_{\omega} | 
+ k^2 \, \norm{\vv_h^i}{\omega} \norm{\nabla \vv_h^i}{\omega} 
+ k^2 \, \norm{\vv_h^i}{\omega}^2.
\end{split}
\end{align}
Similarly, the additional assumption $\ff \in C^1([0,T];\LL{2}{\omega}) \cap C([0,T];\LL{3}{\omega})$ yields that
\begin{eqnarray}\label{eq:energystrong7}
|T_{\ff,1} + T_{\ff,2}| 
& \lesssim &
k |\prod{\ff(t_{i+1/2})-\ff^{i+1}}{\vv_h^i}_{\omega}| 
+ k^2 \, \norm{\vv_h^i}{\omega} \norm{\nabla \vv_h^i}{\omega} 
+ k^2 \, \norm{\vv_h^i}{\omega}^2.
\end{eqnarray}
The combination of~\eqref{eq:energy_strong_base} with~\eqref{eq:energystrong6}--\eqref{eq:energystrong7} and summation over $i = 0,\dots, j-1$ yields that
\begin{align}\label{eq::proofthmenergyeq1}
\begin{split}
&\mathcal{E}_{\textrm{LLG}}(\mm_{hk}^+(\tau)) - \mathcal{E}_{\textrm{LLG}}(\mm_h^0) + \int_0^{t_j}\prod{\weight_{M(k)}(\lambda_{hk}^-)\vv_{hk}^-}{\vv_{hk}^-}_{\omega} \,\d{t} 
\\
& \,\,\,  +  
\int_0^{t_j} \prod{\partial_t \ff_k}{\mm_{hk}^-}_{\omega}\,\d{t} 
- \int_0^{t_j} \prod{\Ppi_{hk}(\vv_{hk}^-;\mm_{hk}^-,\mm_{hk}^=)}{\vv_{hk}^-}_{\omega}\,\d{t} 
\\
&\lesssim k\int_0^{t_j}\norm{\vv_{hk}^-}{\omega}^2\,\d{t} 
+ k\int_0^{t_j}\norm{\vv_{hk}^-}{\omega} \norm{\nabla\vv_{hk}^-}{\omega}\,\d{t} 
\\
&\,\,\, + \int_0^{t_j}| \prod{\ppi_{hk}(\vv_{hk}^-;\mm_{hk}^-,\mm_{hk}^=) - \ppi(\mm_{hk}^-)}{\vv_h^j}_{\omega} |\,\d{t} + \int_0^{t_j}|\prod{\overline{\ff}_{k}-\ff_k^+}{\vv_{hk}^-}_{\omega}|\,\d{t}.
\end{split}
\end{align}
From strong convergence in~\eqref{eq:initconvergence_m0}, it follows that $\mathcal{E}_{\textrm{LLG}}(\mm_{h}^0) \rightarrow \mathcal{E}_{\textrm{LLG}}(\mm^0)$ as $h,k \rightarrow 0$. 
The first and second term on the right-hand side vanish as $h,k \rightarrow 0$ due to Lemma~\ref{lemma:extractsubsequences}\ref{item:weakly6}--\ref{item:weakly7}. Thanks to~\eqref{eq:pihkconvergence} and~\eqref{eq:Pihkconvergence} with strong convergence,
 the last two terms on the right-hand side of~\eqref{eq::proofthmenergyeq1} vanish as $h,k \rightarrow 0$. Standard lower semicontinuity arguments for the remaining terms in~\eqref{eq::proofthmenergyeq1} conclude the proof.
\end{proof}

\section{Proof of Theorem~\ref{thm:maintheorem_ellg} (Numerical integration of ELLG)} 
\label{section:proof:maintheorem_ellg}

\begin{proof}[Proof of Theorem~\ref{thm:maintheorem_ellg}{\rm(i)}]
Note that the right-hand side of~\eqref{eq:tps2_ellg_variational1} can depend non-linearly on $\vv_h^i$. As in the proof of Theorem~\ref{thm:maintheorem}(i), we employ a fixpoint iteration, where 
$\big(\eeta_h^\ell,\nnu_h^\ell\big) \approx \big(\vv_h^i,\midh{\hh}{i}\big) \in \Kh{\mm_h^i} \times \cchi_h$. To this end, let $\eeta_h^0:=0$ and $\nnu_h^0:=\hh_h^i$. For $\ell\in\N_0$, let
$\eeta_h^{\ell+1}\in \Kh{\mm_h^i}$ satisfy, for all $\vvarphi_h \in \Kh{\mm_h^i}$,
\begin{subequations}\label{eq:fixedpoint_ellg_variational}
\begin{align}\label{eq:fixedpoint_ellg_variational1}
\begin{split}
&\!\! \prod{\weight_{M(k)}(\lambda_h^i)\eeta_h^{\ell+1}}{\vvarphi_h}_{\omega} 
+ \prod{\mm_h^i\times\eeta_h^{\ell+1}}{\vvarphi_h}_{\omega} + 
\frac{\Cex}{2} \, k \, (1+\rho(k))\prod{\nabla\eeta_h^{\ell+1}}{\nabla\vvarphi_h}_{\omega} \\
&\!\! = -\Cex\prod{\nabla\mm_h^i}{\nabla\vvarphi_h}_{\omega} + 
\prod{\pih{i}{\eeta_h^{\ell}}{\mm_h^i}{\mm_h^{i-1}}}{\vvarphi_h}_{\omega}
+ \prod{\midh{\ff}{i}}{\vvarphi_h}_{\omega} \\
&\!\! \quad +  \prod{\Pih{i}{\eeta_h^{\ell}}{\mm_h^i}{\mm_h^{i-1}}}{\vvarphi_h}_{\omega}  + 
2 \Ttheta^k_{i3} \prod{\nnu_h^\ell}{\vvarphi_h}_{\omega} + 
 \prod{( \Ttheta^k_{i2}\!-\!\Ttheta^k_{i3}) \hh_h^i  \!+\! \Ttheta^k_{i1} \hh_h^{i-1} }{\vvarphi_h}_{\omega}.
\end{split}
\end{align}
Moreover, let $\nnu_h^{\ell+1} \in \cchi_h$ satisfy, for all $\zzeta_h \in \cchi_h$,
\begin{align}\label{eq:fixedpoint_ellg_variational2}
\begin{split}
& \frac{2\mu_0}{k} \prod{\nnu_h^{\ell+1}}{\zzeta_h}_{\Omega} + 
\prod{\sigma^{-1} \nabla \times \nnu_h^{\ell+1}}{\nabla \times \zzeta_h}_{\Omega} \\
& \quad = 
- \frac{\mu_0}{k} \langle \, \II_h \bigg( \, \frac{\mm_h^i + k \eeta_h^{\ell+1}}{|\mm_h^i + k \eeta_h^{\ell+1}|} \, \bigg),\zzeta_h \rangle_{\omega} + \frac{\mu_0}{k} 
\prod{\mm_h^i}{\zzeta_h}_{\omega} +
\frac{2\mu_0}{k} \prod{\hh_h^i}{\zzeta_h}_{\Omega},
\end{split}
\end{align}
where $\II_h:\boldsymbol{C}(\overline\omega)\to\Vh$ denotes the nodal interpolation operator.
\end{subequations}
Since $\rho(k) \geq 0$ and $\weight_{M(k)}(\cdot) > 0$, the bilinear forms on the left-hand sides of~\eqref{eq:fixedpoint_ellg_variational} are elliptic on $(\Kh{\mm_h^i},\norm{\cdot}{\omega})$ and $(\cchi_h,\norm{\cdot}{\Omega})$, respectively. Since $\eeta_h^{\ell+1}$ is known for the computation of $\nnu_h^{\ell+1}$, the fixpoint iteration is thus well-defined. 
We subtract~\eqref{eq:fixedpoint_ellg_variational} for $(\eeta_h^\ell,\nnu_h^\ell)$
from~\eqref{eq:fixedpoint_ellg_variational} for $(\eeta_h^{\ell+1},\nnu_h^{\ell+1})$ and test with $\vvarphi_h := \eeta_h^{\ell+1} - \eeta_h^{\ell} \in \Kh{\mm_h^i}$ and $\zzeta_h := \nnu_h^{\ell+1} - \nnu_h^{\ell} \in \cchi_h$.
For sufficiently small $k$, Lemma~\ref{lemma:weight}(i) and~\eqref{eq:assumption:M} prove that $\weight_{M(k)}(\cdot) \ge \alpha/2$.
With $\sup_{k} \max_{ij} \abs{\Ttheta^k_{ij}} \leq C_{\Ttheta} < \infty$ as well as~\eqref{eq:pihlipschitz} and~\eqref{eq:Pihlipschitz}, we get as in the proof of Theorem~\ref{thm:maintheorem}(i) that
\begin{align}\label{eq:welldefinid_ell_step1}
&\frac{\alpha}{2} \, \norm{\eeta_h^{\ell+1} - \eeta_h^{\ell}}{\omega}^2 
+ \frac{\Cex}{2} \, k \, (1+\rho(k)) \, \norm{\nabla(\eeta_h^{\ell+1} - \eeta_h^{\ell})}{\omega}^2
\notag
\\&\quad
\stackrel{\eqref{eq:fixedpoint_ellg_variational1}}{\leq}
 \big( \Cpi + \CPi  \big) \, k \, \norm{\eeta_h^{\ell} - \eeta_h^{\ell-1}}{\omega}
 \norm{\eeta_h^{\ell+1} - \eeta_h^{\ell}}{\omega}
 + \CPi \, k \, \norm{\nabla(\eeta_h^{\ell} - \eeta_h^{\ell-1})}{\omega}
 \norm{\eeta_h^{\ell+1} - \eeta_h^{\ell}}{\omega}
 \notag 
 \\&\qquad
 + 2 C_{\Ttheta} \norm{\nnu_h^{\ell} - \nnu_h^{\ell-1}}{\Omega} \norm{\eeta_h^{\ell+1} - \eeta_h^{\ell}}{\omega} 
\quad \textrm{for all } \ell \in \N.
\end{align}
For all $\vvarphi_h \in \Vh$, it holds that $\norm{\II_h\vvarphi_h}{\omega} \leq \sqrt{5}\,\norm{\vvarphi_h}{\omega}$; see, e.g.,~\cite[Lemma 2.2.3]{goldenits2012}. Moreover, for all $x,y \in \R$ with $|x|, |y| \geq 1$, it holds that $| x / |x| - y / |y| | \leq |x - y|$. Since $|\mm_h^i(\zz_h) + k \eeta_h^{\ell}(\zz_h)| \geq 1$ for all $\ell \in \N_0$ and for all nodes $\zz_h \in \NN_h|_{\omega}$, we get that
\begin{eqnarray*}
& & \hspace*{-1.5cm} \norm{\nnu_h^{\ell+1} - \nnu_h^{\ell}}{\Omega}^2 + 
\frac{k}{2\mu_0} 
\norm{\sigma^{-1/2}\nabla \times \big( \nnu_h^{\ell+1} - \nnu_h^{\ell} )}{\Omega}^2 \\
&\stackrel{\eqref{eq:fixedpoint_ellg_variational2}}{=} &
- \frac12 \, 
\bigg\langle \II_h \bigg(\, \frac{\mm_h^i + k \eeta_h^{\ell+1}}{|\mm_h^i + k \eeta_h^{\ell+1}|} \,\bigg)
- \II_h \bigg(\, \frac{\mm_h^i + k \eeta_h^{\ell}}{|\mm_h^i + k \eeta_h^{\ell}|} \,\bigg)
,
\nnu_h^{\ell+1} - \nnu_h^{\ell} 
\bigg\rangle_{\omega} \notag \\
& \leq & \frac{\sqrt{5}k}{2} \, k \,  \norm{\eeta_h^{\ell+1} - \eeta_h^{\ell} }{\omega} \norm{\nnu_h^{\ell+1} - \nnu_h^{\ell}}{\Omega}
\quad \textrm{for all } \ell \in \N_0.
\end{eqnarray*}
The latter equation yields that
\begin{align}\label{eq:contraction_ellg1}
\norm{\nnu_h^{\ell+1} - \nnu_h^{\ell}}{\Omega} \leq \frac{\sqrt{5}}{2} \, k \,  \norm{\eeta_h^{\ell+1} - \eeta_h^{\ell} }{\omega}
\quad \textrm{for all } \ell \in \N_0.
\end{align}
We add~\eqref{eq:welldefinid_ell_step1}--\eqref{eq:contraction_ellg1} and obtain that
\begin{align}\label{eq:contraction_ellg2}
\begin{split}
&\frac{\alpha}{2} \, \norm{\eeta_h^{\ell+1} - \eeta_h^{\ell}}{\omega}^2 
+ \frac{\Cex}{2} \, k \, (1+\rho(k)) \, \norm{\nabla(\eeta_h^{\ell+1} - \eeta_h^{\ell})}{\omega}^2
+ \norm{\nnu_h^{\ell+1} - \nnu_h^{\ell}}{\Omega}^2
\\&\quad
\leq 
 \big( \Cpi + \CPi + \sqrt{5} \, C_{\Ttheta}\big) \, k \, \norm{\eeta_h^{\ell} - \eeta_h^{\ell-1}}{\omega}
 \norm{\eeta_h^{\ell+1} - \eeta_h^{\ell}}{\omega}
 \\&\qquad
 + \CPi \, k \, \norm{\nabla(\eeta_h^{\ell} - \eeta_h^{\ell-1})}{\omega}
 \norm{\eeta_h^{\ell+1} - \eeta_h^{\ell}}{\omega}
 + \frac{5}{4} \, k^2 \, \norm{\eeta_h^{\ell+1} - \eeta_h^{\ell} }{\omega}^2
\quad \textrm{for all } \ell \in \N.
\end{split}
\end{align}
We equip the product space $\Kh{\mm_h^i} \times \cchi_h$ with the norm
$|\!|\!|(\eeta_h,\nnu_h)|\!|\!|^2 := (\alpha/4) \, \norm{\eeta_h}{\omega}^2 + (\Cex/2) \, k \, (1+\rho(k)) \, \norm{\nabla\eeta_h}{\omega}^2 + \norm{\nnu_h}{\Omega}^2$. For $\delta>0$, the Young inequality then proves that
\begin{align*}
& \min\Big\{1 \, , \, 2 - \frac{1}{\alpha\delta} \, (\Cpi + 2\,\CPi + \sqrt{5} \, C_{\Ttheta}) \, k - \frac{5}{\alpha}\, k^2 \Big\}
\, |\!|\!| (\eeta_h^{\ell+1},\nnu_h^{\ell+1}) - (\eeta_h^{\ell},\nnu_h^\ell) |\!|\!|^2
\notag \\
&\quad\le
\Big[\frac{\alpha}{2} - \frac{1}{4\delta} \, (\Cpi + 2\,\CPi + \sqrt{5} \, C_{\Ttheta}) \, k - \frac{5}{4}\, k^2 \Big] \, \norm{\eeta_h^{\ell+1} - \eeta_h^{\ell}}{\omega}^2
\notag
\\&\quad\qquad
+ \frac{\Cex}{2} \, k \, (1+\rho(k)) \, \norm{\nabla \eeta_h^{\ell+1} - \nabla \eeta_h^{\ell}}{\omega}^2 
 + \norm{\nnu_h^{\ell+1} - \nnu_h^{\ell}}{\Omega}^2
\notag \\
&\quad\le
\delta \, (\Cpi + \CPi + \sqrt{5} \, C_{\Ttheta}) \,k \,\norm{\eeta_h^{\ell} - \eeta_h^{\ell-1}}{\omega}^2 
+ \delta \, \CPi \, k \, \norm{\nabla(\eeta_h^{\ell} - \eeta_h^{\ell-1})}{\omega}^2
\notag \\
&\quad\le \delta \, \max\Big\{\frac{4}{\alpha} \, (\Cpi + \CPi+ \sqrt{5} \, C_{\Ttheta}) \,k \, , \, 2 \, \frac{\CPi}{\Cex} \Big\} 
\, |\!|\!| (\eeta_h^{\ell},\nnu_h^{\ell}) - (\eeta_h^{\ell-1},\nnu_h^{\ell-1}) |\!|\!|^2
 \quad \textrm{for all } \ell \in \N.
\end{align*}
For sufficiently small $\delta$ and $k$, the iteration is thus a contraction with respect to $|\!|\!|\cdot|\!|\!|$.
The Banach fixpoint theorem yields existence and uniqueness of 
$(\vv_h^i,\midh{\hh}{i}) \in \Kh{\mm_h^i} \times \cchi_h$ of~\eqref{eq:fixedpoint_ellg_variational}. With $\hh_h^{i+1} := 2\midh{\hh}{i} - \hh_h^i \in \cchi_h$, \ $(\vv_h^i,\hh_h^{i+1}) \in \Kh{\mm_h^i} \times \cchi_h$ is the unique solution of~\eqref{eq:abtps_ellg}; cf.\ Remark~\ref{remark:adellg}{\rm (v)}.
The remainder of the proof follows as for Theorem~\ref{thm:maintheorem}(i).
\end{proof}

\begin{lemma}\label{lemma:energy_ellg}
Under the assumptions of Theorem~\ref{thm:maintheorem_ellg}{\rm(ii)}, the following assertions {\rm (i)}--{\rm (iii)} are satisfied, if $k>0$ is sufficiently small.

{\rm (i)} For all $i = 0,\dots,N-1$ it holds that
\begin{align}\label{eq:energyestimate_ellg1}
\begin{split}
& \frac{\Cex}{2} \dtshort \norm{\nabla\mm_h^{i+1}}{\omega}^2 + \prod{\weight_{M(k)}(\lambda_h^i)\vv_h^i}{\vv_h^i}_{\omega} + \frac{\Cex }{2} \, k\rho(k) \,\norm{\nabla\vv_h^i}{\omega}^2 
\\
& \quad + \frac12 \dtshort \norm{\hh_h^{i+1}}{\Omega}^2 + 
\frac{1}{\mu_0} \norm{\sigma^{-1/2} \nabla \times \midh{\hh}{i}}{\Omega}^2
\\
& \le \prod{\ff(t_{i+1/2})}{\vv_h^i}_{\omega} + \prod{\pih{i}{\vv_h^i}{\mm_h^i}{\mm_h^{i-1}}}{\vv_h^i}_{\omega}
+ \prod{\Pih{i}{\vv_h^i}{\mm_h^i}{\mm_h^{i-1}}}{\vv_h^i}_{\omega}
\\
& \quad + \prod{\vv_h^i - \dth{\mm}{i}}{\midh{\hh}{i}}_{\omega} + \prod{\vv_h^i}{\gentimeh{\hh}{i} - \midh{\hh}{i}}_{\omega}.
\end{split}
\end{align}

{\rm (ii)} For all $i = 0,\dots,N-1$, it holds that 
\begin{align}\label{eq:discreteenergy_ellg2}
\mu_0 \norm{\dth{\hh}{i}}{\Omega}^2 +  \dtshort \norm{\sigma^{-1/2} \nabla \times \hh_h^{i+1}}{\Omega}^2
&\leq \mu_0 \norm{\dth{\mm}{i}}{\omega}^2.
\end{align}

{\rm (iii)} For all $j = 0,\dots,N-1$, it holds that
\begin{align}\label{eq:energyabtps_ellg}
\begin{split}
& \norm{\nabla \mm_h^j}{\omega}^2 + k\sum_{i=0}^{j-1}\norm{\vv_h^i}{\omega}^2 
+ k^2\rho(k)\sum_{i=0}^{j-1}\norm{\nabla\vv_h^i}{\omega}^2 \\
& \quad + \norm{\hh_h^{j}}{\Omega}^2 + \norm{\sigma^{-1/2}\nabla \times \hh_h^{j}}{\Omega}^2 
+ k \sum_{i=0}^{j-1} \norm{\dth{\hh}{i}}{\Omega}^2  \le C,
\end{split}
\end{align}
where $C>0$ depends only on $T$, $\omega$, $\Omega$, $\alpha$, $\mu_0$, $\sigma_0$, $\Cex$, $\ff$, $\ppi$, $\Ppi$, $\mm^0$, $\hh^0$, $C_{\Ttheta}$, and $\Cmesh$.
\end{lemma}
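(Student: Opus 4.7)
The proof splits naturally along the three assertions, with (i) and (ii) following essentially by testing the discrete equations with suitable choices, while (iii) is a Gronwall argument.

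\emph{Step 1 (proof of (i)).} The plan is to test the discrete LLG equation~\eqref{eq:tps2_ellg_variational1} with $\vvarphi_h := \vv_h^i \in \Kh{\mm_h^i}$ and the discrete eddy current equation~\eqref{eq:tps2_ellg_variational2} with $\zzeta_h := \midh{\hh}{i} \in \cchi_h$. The telescoping identity $\prod{\dth{\hh}{i}}{\midh{\hh}{i}}_{\Omega} = \tfrac12\dtshort\norm{\hh_h^{i+1}}{\Omega}^2$ produces the magnetic-field energy increment. Combined with the weak-acuteness bound $\norm{\nabla\mm_h^{i+1}}{\omega}^2 \le \norm{\nabla(\mm_h^i+k\vv_h^i)}{\omega}^2$ from \cite[Lemma~3.2]{bartels2005} used in the proof of Lemma~\ref{lemma:energy}(i), the only new ingredient is the algebraic rewriting
\[
\prod{\gentimeh{\hh}{i}}{\vv_h^i}_{\omega} - \prod{\dth{\mm}{i}}{\midh{\hh}{i}}_{\omega} = \prod{\vv_h^i - \dth{\mm}{i}}{\midh{\hh}{i}}_{\omega} + \prod{\vv_h^i}{\gentimeh{\hh}{i}-\midh{\hh}{i}}_{\omega},
\]
which delivers~\eqref{eq:energyestimate_ellg1}. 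For (ii), I would test~\eqref{eq:tps2_ellg_variational2} with $\zzeta_h := \dth{\hh}{i}$ and use the telescoping $\prod{\sigma^{-1}\nabla\times\midh{\hh}{i}}{\nabla\times\dth{\hh}{i}}_{\Omega} = \tfrac12\dtshort\norm{\sigma^{-1/2}\nabla\times\hh_h^{i+1}}{\Omega}^2$; the Young inequality $|\mu_0\prod{\dth{\mm}{i}}{\dth{\hh}{i}}_\omega| \le \tfrac{\mu_0}{2}\norm{\dth{\mm}{i}}{\omega}^2 + \tfrac{\mu_0}{2}\norm{\dth{\hh}{i}}{\Omega}^2$ and absorption of the $\dth{\hh}{i}$-term yields~\eqref{eq:discreteenergy_ellg2}.

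\emph{Step 2 (proof of (iii)).} I would sum~\eqref{eq:energyestimate_ellg1} over $i=0,\dots,j-1$, multiply by $k$, and invoke Lemma~\ref{lemma:weight}(i) with~\eqref{eq:assumption:M} to replace $\weight_{M(k)}(\lambda_h^i)$ by a lower bound $\alpha/2$. The $\ff$-, $\ppi_h^i$-, and $\Pih{i}$-contributions are estimated exactly as in the proof of Lemma~\ref{lemma:energy}(ii) via~\eqref{eq:pihstability}--\eqref{eq:Pihstability}; the mixed term $k^2\sum_i\norm{\vv_h^i}{\omega}\norm{\nabla\vv_h^i}{\omega}$ arising from the $\HH{1}{\omega}$-norm in~\eqref{eq:Pihstability} is absorbed into the stabilization $k^2\rho(k)\sum_i\norm{\nabla\vv_h^i}{\omega}^2$ at the cost of a factor $k/\rho(k)\to0$ from~\eqref{eq:assumption:rho}. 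For the two new coupling contributions, the node-wise identity~\eqref{eq:auxiliary_estimate2} yields $\norm{\dth{\mm}{i}}{\omega}\lesssim\norm{\vv_h^i}{\omega}$, so that $|\prod{\vv_h^i-\dth{\mm}{i}}{\midh{\hh}{i}}_\omega|\lesssim\norm{\vv_h^i}{\omega}(\norm{\hh_h^{i+1}}{\Omega}+\norm{\hh_h^i}{\Omega})$, while~\eqref{eq:thetainfnone_ellg} (summing to one) implies $\norm{\gentimeh{\hh}{i}-\midh{\hh}{i}}{\omega}\lesssim\norm{\hh_h^{i+1}}{\Omega}+\norm{\hh_h^i}{\Omega}+\norm{\hh_h^{i-1}}{\Omega}$, and the second coupling term admits an analogous Young bound. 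After choosing Young weights small enough to absorb a $\delta\norm{\vv_h^i}{\omega}^2$ contribution on the left, I arrive at an inequality $\chi_j \lesssim 1 + k\sum_{i=0}^{j-1}\chi_i$ for
\[
\chi_j := \norm{\nabla\mm_h^j}{\omega}^2 + k\sum_{i=0}^{j-1}\norm{\vv_h^i}{\omega}^2 + k^2\rho(k)\sum_{i=0}^{j-1}\norm{\nabla\vv_h^i}{\omega}^2 + \norm{\hh_h^j}{\Omega}^2,
\]
to which the discrete Gronwall lemma applies. Finally, summing~\eqref{eq:discreteenergy_ellg2} over $i$, multiplying by $k$, and using $k\sum_i\norm{\dth{\mm}{i}}{\omega}^2\lesssim k\sum_i\norm{\vv_h^i}{\omega}^2\lesssim 1$ (by~\eqref{eq:auxiliary_estimate2} and the just-proved bound) completes the missing estimates on $\norm{\sigma^{-1/2}\nabla\times\hh_h^j}{\Omega}^2$ and $k\sum_i\norm{\dth{\hh}{i}}{\Omega}^2$.

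\emph{Main obstacle.} The only delicate point is the Young-inequality bookkeeping in the Gronwall step: the coupling terms $\prod{\vv_h^i-\dth{\mm}{i}}{\midh{\hh}{i}}_\omega$ and $\prod{\vv_h^i}{\gentimeh{\hh}{i}-\midh{\hh}{i}}_\omega$ must be split so that the $\norm{\vv_h^i}{\omega}^2$-factors remain absorbable on the left, while the $\norm{\hh_h^{i}}{\Omega}^2$-factors feed through the $\chi_j$-term on the right and are closed by discrete Gronwall; simultaneously, the $\Pih{i}$-stability must be handled by the $k^2\rho(k)$-stabilization, exactly as in Lemma~\ref{lemma:energy}, so that~\eqref{eq:assumption:rho} is the only place where the smallness of $k/\rho(k)$ is needed.
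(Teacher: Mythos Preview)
Your proposal is correct and follows essentially the same approach as the paper: test~\eqref{eq:tps2_ellg_variational1} with $\vv_h^i$ and~\eqref{eq:tps2_ellg_variational2} with (a multiple of) $\midh{\hh}{i}$ for (i), test~\eqref{eq:tps2_ellg_variational2} with $\dth{\hh}{i}$ for (ii), then sum (i) and apply discrete Gronwall, and finally sum (ii) to recover the curl and $\dth{\hh}{i}$ bounds. The only cosmetic difference is that the paper also carries the nonnegative term $\frac{k}{\mu_0}\sum_i\norm{\sigma^{-1/2}\nabla\times\midh{\hh}{i}}{\Omega}^2$ inside its $\chi_j$, and it absorbs the stray $k\,\norm{\hh_h^{j}}{\Omega}^2$ contribution (arising from $\midh{\hh}{j-1}$ and $\gentimeh{\hh}{j-1}$) directly into the left-hand side for $k$ small rather than routing it through Gronwall; your bookkeeping in the ``Main obstacle'' paragraph handles this equivalently.
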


\begin{proof}
For the LLG-part~\eqref{eq:tps2_ellg_variational1}, we argue as in the proof of Lemma~\ref{lemma:energy}(i) to see that
\begin{align}\label{eq:discrete1}
& \frac{\Cex}{2} \dtshort \norm{\nabla\mm_h^{i+1}}{\omega}^2 + \prod{\weight_{M(k)}(\lambda_h^i)\vv_h^i}{\vv_h^i}_{\omega} + \frac{\Cex}{2} \, k\rho(k) \, \norm{\nabla\vv_h^i}{\omega}^2 
\\& \notag
\,\,\, \le \prod{\ff(t_{i+1/2})}{\vv_h^i}_{\omega} \!+\! \prod{\pih{i}{\vv_h^i}{\mm_h^i}{\mm_h^{i-1}}}{\vv_h^i}_{\omega}
\!+\! \prod{\Pih{i}{\vv_h^i}{\mm_h^i}{\mm_h^{i-1}}}{\vv_h^i}_{\omega} \!+\! \prod{\gentimeh{\hh}{i}}{\vv_h^i}_{\omega}. 
\end{align}
Testing~\eqref{eq:tps2_ellg_variational2} with $\zzeta_h := - (1 / \mu_0) \, \midh{\hh}{i}$, we obtain that
\begin{eqnarray}\label{eq:discrete2}
 \prod{\dth{\mm}{i}}{\midh{\hh}{i}}_{\omega} & \stackrel{\eqref{eq:tps2_ellg_variational2}}{=} & 
- \prod{\dth{\hh}{i}}{\midh{\hh}{i}}_{\Omega} - 
\frac{1}{\mu_0} \norm{\sigma^{-1/2} \nabla \times \midh{\hh}{i}}{\Omega}^2 \notag \\
& = & - \frac12 \dtshort \norm{\hh_h^{i+1}}{\Omega}^2 - 
\frac{1}{\mu_0} \norm{\sigma^{-1/2} \nabla \times \midh{\hh}{i}}{\Omega}^2.
\end{eqnarray}
Inserting $\gentimeh{\hh}{i}$ and $\vv_h^i$ in~\eqref{eq:discrete2}, we are led to
\begin{align}\label{eq:discrete3}
\begin{split}
\prod{\vv_h^i}{\gentimeh{\hh}{i}}_{\omega} &= 
\prod{\vv_h^i}{\gentimeh{\hh}{i} - \midh{\hh}{i}}_{\omega}
+ \prod{\vv_h^i - \dth{\mm}{i}}{\midh{\hh}{i}}_{\omega} \\
& \quad -\frac12 \dtshort \norm{\hh_h^{i+1}}{\Omega}^2 - \frac{1}{\mu_0} \norm{\sigma^{-1/2} \nabla \times \midh{\hh}{i}}{\Omega}^2.
\end{split}
\end{align}
Adding~\eqref{eq:discrete1} and~\eqref{eq:discrete3}, we prove {\rm (i)}. To prove {\rm (ii)}, we test~\eqref{eq:tps2_ellg_variational2} with $\zzeta_h := \dth{\hh}{i}$. With the Young inequality, we obtain that
\begin{align}\label{eq:discrete8}
\mu_0 \norm{\dth{\hh}{i}}{\Omega}^2 + 
\frac{1}{2} \dtshort \norm{\sigma^{-1/2} \nabla \times \hh_h^{i+1}}{\Omega}^{2} 
&\stackrel{\eqref{eq:tps2_ellg_variational2}}{=} - \mu_0 \prod{\dth{\mm}{i}}{\dth{\hh}{i}}_{\omega} \notag \\
& \stackrel{\phantom{\eqref{eq:tps2_ellg_variational2}}}{\leq} 
\frac{\mu_0}{2} \norm{\dth{\mm}{i}}{\omega}^2 + \frac{\mu_0}{2} \norm{\dth{\hh}{i}}{\Omega}^2. 
\end{align}
This proves {\rm (ii)}. To prove~{\rm (iii)}, we sum~\eqref{eq:energyestimate_ellg1} over $i = 0,\dots,j-1$ and multiply with $k$.
With $\weight_{M(k)}(\cdot) \geq \alpha/2 > 0$ for $k$ being sufficiently small, we obtain that
\begin{align*}
{\rm \chi}_j
& := 
\frac{\Cex}{2} \, \norm{ \nabla \mm_h^{j} }{\omega}^2 + 
\frac{\alpha}{2} \, k \sum_{i=0}^{j-1} \norm{ \vv_h^i }{\omega}^2 
+ \frac{\Cex}{2} \, k^2\rho(k) \, \sum_{i=0}^{j-1}\norm{\nabla\vv_h^i}{\omega}^2 \notag \\
& \quad +\frac12 \, \norm{\hh_h^{j}}{\Omega}^2 + 
\frac{k}{\mu_0} \sum_{i=0}^{j-1} \norm{\sigma^{-1/2} \nabla \times \midh{\hh}{i}}{\Omega}^2 \notag \\
& \stackrel{\eqref{eq:energyestimate_ellg1}}{\le} 
\frac{\Cex}{2} \, \norm{ \nabla \mm_h^0 }{\omega}^2 
+ k \sum_{i=0}^{j-1} \prod{\ff(t_{i+1/2})}{\vv_h^i}_{\omega} 
+ k \sum_{i=0}^{j-1} \prod{\pih{i}{\vv_h^i}{\mm_h^i}{\mm_h^{i-1}}}{\vv_h^i}_{\omega}
\notag \\
& \quad + k \sum_{i=0}^{j-1} \prod{\Pih{i}{\vv_h^i}{\mm_h^i}{\mm_h^{i-1}}}{\vv_h^i}_{\omega} + \frac12 \, \norm{\hh_h^{0}}{\Omega}^2
 \notag \\
& \quad  
 + k \sum_{i=0}^{j-1} \prod{\vv_h^i - \dth{\mm}{i}}{\midh{\hh}{i}}_{\omega}
+ k \sum_{i=0}^{j-1} \prod{\vv_h^i}{\gentimeh{\hh}{i} - \midh{\hh}{i}}_{\omega}. 
\end{align*}
Recalling from~\eqref{eq:auxiliary_estimate2} that $\norm{\dth{\mm}{i}}{\omega} \lesssim \norm{\vv_h^i}{\omega}$, we proceed as in the proof of Lemma~\ref{lemma:energy}{\rm (ii)}.
Together with~\eqref{eq:pihstability} and~\eqref{eq:Pihstability}, the Young inequality proves that
\begin{align}\label{eq:discrete5}
\begin{split}
{\rm \chi}_j   
&\lesssim \norm{\nabla \mm_h^0}{\omega}^2 
+ \norm{\hh_h^0}{\Omega}^2
+ \frac{k}{\delta} \sum_{i=0}^{j-1} \norm{\ff(t_{i+1/2})}{\omega}^2 
+ \frac{k}{\delta} \sum_{i=0}^{j-1} \norm{\mm_h^i}{\omega}^2
+ \frac{k}{\delta} \sum_{i=0}^{j-1} \norm{ \nabla \mm_h^{i}}{\omega}^2
\\&\quad
+ \frac{k}{\delta} \sum_{i=0}^{j} \norm{\hh_h^{i}}{\omega}^2
+ \Big(\delta + k + \frac{k}{\delta\rho(k)}\Big) \, k \, \sum_{i=0}^{j-1} \norm{ \vv_h^i }{\omega}^2 
+ \delta k^2 \rho(k) \, \sum_{i=0}^{j-1} \norm{ \nabla \vv_h^i }{\omega}^2.
\end{split}
\end{align}
Since $k/\rho(k) \to 0$ as $k\to0$, we choose $\delta$ sufficiently small and can absorb
$k\delta^{-1} \, \norm{\hh_h^{j}}{\omega}^2 + k \, \sum_{i=0}^{j-1} \norm{\vv_h^i}{\omega}^2 + \delta k^2 \rho(k) \, \sum_{i=0}^{j-1} \norm{ \nabla \vv_h^i }{\omega}^2$ in $\chi_j$.
Altogether, we arrive at
\begin{align}
{\rm \chi}_j \lesssim 1 + k \sum_{i=0}^{j-1} \norm{ \nabla \mm_h^{i}}{\omega}^2 + k\sum_{i=0}^{j-1} \norm{\hh_{h}^i}{\Omega}^2 \lesssim 1 + k \Sum{i=0}{j-1} {\rm \chi}_i . \notag
\end{align}
Arguing as for Lemma~\ref{lemma:energy}{\rm (ii)}, we get that ${\rm \chi}_j  $ is uniformly bounded for all $j=1,\dots,N$.
In order to bound the remaining terms from~\eqref{eq:energyabtps_ellg}, we sum~\eqref{eq:discreteenergy_ellg2} for $i=0,\dots,j-1$ and multiply by $k$. Recall from~\eqref{eq:auxiliary_estimate2} that $\norm{\dth{\mm}{i}}{\omega} \lesssim \norm{\vv_h^i}{\omega}$. This yields that
\begin{align}
\mu_0 k \sum_{i=0}^{j-1} \norm{\dth{\hh}{i}}{\Omega}^2 
&+ \norm{\sigma^{-1/2} \nabla \times \hh_h^{j}}{\Omega}^{2} \stackrel{\eqref{eq:discreteenergy_ellg2}}{\leq} 
\mu_0 k \sum_{i=0}^{j-1} \norm{\dth{\mm}{i}}{\omega}^2 + \norm{\sigma^{-1/2}  \nabla \times \hh_h^{0}}{\Omega}^{2} \notag \\
&\lesssim k \sum_{i=0}^{j-1} \norm{\vv_h^i}{\omega}^2 + \norm{\sigma^{-1/2}  \nabla \times \hh_h^{0}}{\Omega}^{2} \stackrel{\eqref{eq:discrete5}}{\lesssim} {\rm \chi}_j   + \norm{\sigma^{-1/2}  \nabla \times \hh_h^{0}}{\Omega}^{2} . \notag
\end{align}
Altogether, this proves {\rm (iii)} and concludes the proof.
\end{proof}

\begin{lemma}\label{lemma:extractsubsequences_ellg}
Under the assumptions of Theorem \ref{thm:maintheorem_ellg}{\rm(ii)}
consider the  postprocessed output~\eqref{eq:discretefunctions} of Algorithm \ref{alg:abtps_ellg}.
Then, there exist $\mm \in \HH{1}{\omega_T} \cap \L{\infty}{0,T;\HH{1}{\omega}}$ and $\hh \in H^1(0,T;\LL{2}{\Omega}) \cap \L{\infty}{0,T;\boldsymbol{H}({\rm curl},\Omega)}$ and a subsequence of 
each $\mm_{hk}^{\star} \in \{ \mm_{hk}^{=}, \mm_{hk}^-, \mm_{hk}^+, \linebreak \mm_{hk} \}$, $\hh_{hk}^{\star} \in \{ \hh_{hk}^{=}, \hh_{hk}^-, \hh_{hk}^+, \hh_{hk}, \hh_{hk}^{\Ttheta} \}$, and of $\vv_{hk}^-$ such that the following convergence statements~\ref{item:weakly_ellg1}--\ref{item:weakly_ellg11} hold true simultaneously for the same subsequence as $h,k\to0$:
\begin{enumerate}[label=\rm{(\roman*)}]
\item \label{item:weakly_ellg1} $\mm_{hk} \rightharpoonup \mm$ in $\HH{1}{\omega_T}$,
\item \label{item:weakly_ellg2} $\mm_{hk}^{\star} \stackrel{*}\rightharpoonup \mm$ in $\L{\infty}{0,T;\HH{1}{\omega}}$,
\item \label{item:weakly_ellg2b} $\mm_{hk}^{\star} \rightharpoonup \mm$ in $\L{2}{0,T;\HH{1}{\omega}}$,
\item \label{item:weakly_ellg3} $\mm_{hk}^{\star} \rightarrow \mm$ in $\LL{2}{\omega_T}$,
\item \label{item:weakly_ellg4} $\mm_{hk}^{\star} (t) \rightarrow \mm(t)$ in $\LL{2}{\omega}$\ for $t \in (0,T)$ a.e.,
\item \label{item:weakly_ellg5} $\mm_{hk}^{\star} \rightarrow \mm$ pointwise\ a.e. in $\omega_T$,
\item \label{item:weakly_ellg6} $\vv_{hk}^- \rightharpoonup \partial_t \mm$ in $\LL{2}{\omega_T}$,
\item \label{item:weakly_ellg7} $k \, \nabla \vv_{hk}^- \rightarrow 0$ in $\LL{2}{\omega_T}$,
\item \label{item:weakly_ellg8} $\hh_{hk} \rightharpoonup \hh$ in $\H{1}{0,T;\LL{2}{\Omega}}$,
\item \label{item:weakly_ellg9} $\hh_{hk}^{\star} \rightharpoonup \hh$ in $\LL{2}{\Omega_T}$,
\item \label{item:weakly_ellg10} $\hh_{hk}^{\star} \stackrel{*}\rightharpoonup \hh$ in $\L{\infty}{0,T;\boldsymbol{H}({\rm curl},\Omega)}$,
\item \label{item:weakly_ellg10b} $\hh_{hk}^{\star} \rightharpoonup \hh$ in $\L{2}{0,T;\boldsymbol{H}({\rm curl},\Omega)}$,
\item \label{item:weakly_ellg11} $\hh_{hk}^{\star} - \hh_{hk} \rightarrow 0$ in $\LL{2}{\Omega_T}$.
\end{enumerate}
\end{lemma}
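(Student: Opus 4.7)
The plan is to combine Lemma~\ref{lemma:energy_ellg} with standard Banach--Alaoglu and Aubin--Lions type compactness arguments, splitting the claims into the LLG-block (i)--(vii) and the eddy-current block (viii)--(xiii). The argument for the LLG-block is identical to that of Lemma~\ref{lemma:extractsubsequences}. Indeed, Lemma~\ref{lemma:energy_ellg}{\rm (iii)} provides exactly the same uniform bounds as Lemma~\ref{lemma:energy}{\rm (ii)}: uniform boundedness of $\norm{\nabla\mm_h^j}{\omega}$ and of $k\sum_{i=0}^{N-1}\norm{\vv_h^i}{\omega}^2$, so that $\mm_{hk}$ is bounded in $\HH{1}{\omega_T}$ and $\mm_{hk}^{\star}$ is bounded in $\L{\infty}{0,T;\HH{1}{\omega}}$. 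Then (i)--(vii) follow by extracting weak and weak-$\ast$ limits along a common subsequence, using an Aubin--Lions argument to promote weak convergence in $\HH{1}{\omega_T}$ to strong convergence in $\LL{2}{\omega_T}$, and extracting a further subsequence that converges pointwise a.e.; this is done exactly as in~\cite{alouges2008a,bffgpprs2014}.

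Assertion~(viii) is the stabilization estimate: using that $k^2\rho(k)\sum_i\norm{\nabla\vv_h^i}{\omega}^2\lesssim 1$ from Lemma~\ref{lemma:energy_ellg}{\rm (iii)}, we obtain
\begin{align*}
 k^2\norm{\nabla\vv_{hk}^-}{\LL{2}{\omega_T}}^2
 = \frac{k}{\rho(k)}\,\rho(k)\,k^2\sum_{i=0}^{N-1}\norm{\nabla\vv_h^i}{\omega}^2
 \lesssim \frac{k}{\rho(k)} \stackrel{\eqref{eq:assumption:rho}}{\longrightarrow}0
\end{align*}
as $h,k\to 0$, verbatim as in~\eqref{dpr:llg2}.

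For the eddy-current block, Lemma~\ref{lemma:energy_ellg}{\rm (iii)} yields uniform boundedness of
\begin{align*}
 \norm{\hh_{hk}}{\L{\infty}{0,T;\Ltwoshort(\Omega)}},\quad
 \norm{\nabla\times\hh_{hk}}{\L{\infty}{0,T;\Ltwoshort(\Omega)}},\quad
 \norm{\partial_t\hh_{hk}}{\LL{2}{\Omega_T}},
\end{align*}
where the last estimate uses $\partial_t\hh_{hk}(t)=\dth{\hh}{i}$ on $(t_i,t_{i+1})$ together with $k\sum_i\norm{\dth{\hh}{i}}{\Omega}^2\lesssim 1$. Extracting a further subsequence, we thus get (ix), (x), and (xi) with a common limit $\hh\in\H{1}{0,T;\LL{2}{\Omega}}\cap\L{\infty}{0,T;\Hcurl{\Omega}}$. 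The same bound also yields (xii) by weak compactness. For the step functions $\hh_{hk}^{-},\hh_{hk}^{+},\hh_{hk}^{=}$ and the linear combination $\hh_{hk}^{\Ttheta}$, the identity $\hh_{hk}^{+}-\hh_{hk}^{-}=k\,\partial_t\hh_{hk}$ combined with the uniform bound~\eqref{eq:thetainfnone_ellg} on $\Ttheta^k$ shows that $\hh_{hk}^{\star}-\hh_{hk}\to 0$ in $\LL{2}{\Omega_T}$, which proves (xiii) and simultaneously identifies all the weak limits with the same $\hh$, completing (ix)--(xii).

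The only mildly subtle point is ensuring that all the extractions (for the several $\mm_{hk}^{\star}$, for $\vv_{hk}^-$, for the several $\hh_{hk}^{\star}$) can be carried out along one and the same subsequence; this is handled by the usual successive diagonal-extraction procedure, since each step extracts from a previously fixed subsequence. The main obstacle in substance is the curl-bound used for (x)--(xi): it rests on controlling $k\sum_i\norm{\sigma^{-1/2}\nabla\times\midh{\hh}{i}}{\Omega}^2$ and $\norm{\sigma^{-1/2}\nabla\times\hh_h^j}{\Omega}^2$ via Lemma~\ref{lemma:energy_ellg}{\rm (ii)}--{\rm (iii)}, which in turn requires the decoupling estimate $\norm{\dth{\mm}{i}}{\omega}\lesssim\norm{\vv_h^i}{\omega}$ from~\eqref{eq:auxiliary_estimate2}; everything else is routine compactness.
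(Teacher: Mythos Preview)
Your proposal is correct and follows essentially the same approach as the paper: use the uniform bounds from Lemma~\ref{lemma:energy_ellg}{\rm(iii)} to extract weak/weak-$\ast$ convergent subsequences (deferring (i)--(viii) to the earlier LLG argument and literature), and then prove (xiii) via the explicit estimate $\norm{\hh_{hk}-\hh_{hk}^{\star}}{\Omega_T}^2 \lesssim k^2\sum_i\norm{\dth{\hh}{i}}{\Omega}^2 \lesssim k$, which also identifies all limits with the same $\hh$. The only slip is cosmetic: you initially label (viii) as part of the ``eddy-current block'' but then correctly treat it as the $\vv$-stabilization estimate.
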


\begin{proof}
Lemma~\ref{lemma:energy_ellg}(iii) yields uniform boundedness of 
$\mm_{hk} \in \HH{1}{\omega_T}$,
$\mm_{hk}^\star \in \L{\infty}{0,T;\linebreak\HH{1}{\omega}}$, 
$\hh_{hk} \in H^1(0,T;\LL{2}{\Omega})$, and 
$\hh_{hk}^\star \in \L{\infty}{0,T;\boldsymbol{H}({\rm curl},\Omega)}$.
The proofs of~\ref{item:weakly1}--\ref{item:weakly7} and~\ref{item:weakly_ellg8}--\ref{item:weakly_ellg10} follow as in~\cite{alouges2008a,akst2014} 
resp.~\cite{lt2013,lppt2015,bpp2015}. Finally,~\eqref{eq:thetainfnone_ellg} and Lemma~\ref{lemma:energy_ellg}(iii) and prove that
\begin{align}\label{eq:identifylimits}
\norm{\hh_{hk} - \hh_{hk}^{\star} }{\Omega_T}^2 \stackrel{\eqref{eq:discretefunctions}}{\lesssim} k^2 \sum_{i=0}^{N-1} \norm{\dth{\hh}{i}}{\Omega}^2
\stackrel{\eqref{eq:energyabtps_ellg}}{\lesssim} k \,\to\, 0 \quad \textrm{as } h,k \rightarrow 0.
\end{align}
This verifies~\ref{item:weakly_ellg11} and concludes the proof.
\end{proof}

\begin{proof}[Proof of Theorem~\ref{thm:maintheorem_ellg}{\rm (ii)}]
We prove that $(\mm,\hh)$ satisfies Definition~\ref{def:weak_ellg}\ref{item:weak_ellg1}--\ref{item:weak_ellg4}. Definition~\ref{def:weak_ellg}~\ref{item:weak_ellg1} follows as for LLG. Definition~\ref{def:weak_ellg}\ref{item:weak_ellg2} is an immediate consequence of Lemma~\ref{lemma:extractsubsequences_ellg}\ref{item:weakly_ellg8}--\ref{item:weakly_ellg10}.
Definition~\ref{def:weak_ellg}\ref{item:weak_ellg3} follows as in~\cite{lt2013,lppt2015,bpp2015} from Lemma~\ref{lemma:extractsubsequences_ellg}{\ref{item:weakly_ellg1}} and~{\ref{item:weakly_ellg8}}.

It remains to verify Definition~\ref{def:weak_ellg}\ref{item:weak_ellg4}: 
To that end, adopt the notation of the proof of Theorem~\ref{thm:maintheorem}(ii). In addition, let
$\JJ_h:\Hcurl{\Omega} \rightarrow \cchi_h$ denote the interpolation operator for first-order N\'ed\'elec elements of second type; see~\cite{nedelec1986}.
Let $\vvarphi\in \boldsymbol{C}^{\infty}(\overline{\omega_T})$, $\zzeta \in \boldsymbol{C}^{\infty}(\overline{\Omega_T})$ and $t\in[0,T]$. 
For $t \in [t_i,t_{i+1})$ and $i=0,\dots,N-1$, we test~\eqref{eq:abtps_ellg} with $\II_h(\mm_h^i \times \vvarphi(t)) \in \Kh{\mm_h^i}$ and $\JJ_h(\zzeta(t)) \in \cchi_h$ and integrate over $(0,T)$. With the definition of the postprocessed output~\eqref{eq:discretefunctions}, we obtain that
\begin{subequations}\label{eq:weak_variationalhk}
\begin{align}\label{eq:variationalhk1}
& I_{hk}^1 + I_{hk}^2 + \frac{\Cex}{2} \, I_{hk}^3 \notag \\
&=:  \int_0^T\prod{\weight_{M(k)}(\lambda_{hk}^-)\vv_{hk}^-}{\II_h(\mm_{hk}^-\times\vvarphi)}_{\omega} \d{t} + \int_0^T\prod{\mm_{hk}^-\times\vv_{hk}^-}{\II_h(\mm_{hk}^-\times\vvarphi)}_{\omega} \d{t} \notag \\
& \quad +\frac{\Cex}{2} \, k \, (1+\rho(k))\int_0^T\prod{\nabla\vv_{hk}^-}{\nabla\II_h(\mm_{hk}^-\times\vvarphi)}_{\omega} \d{t} \notag \\
&  \stackrel{\eqref{eq:tps2_ellg_variational1}}{=}
 -\Cex\int_0^T\prod{\nabla\mm_{hk}^-}{\nabla\II_h(\mm_{hk}^-\times\vvarphi)}_{\omega} \d{t}
+ \int_0^T\prod{\ppi_{hk}^-(\vv_{hk}^-;\mm_{hk}^-,\mm_{hk}^=)}{\II_h(\mm_{hk}^-\times\vvarphi)}_{\omega} \d{t} \notag \\
& \quad + \int_0^T\prod{\overline{\ff}_{k}}{\II_h(\mm_{hk}^-\times\vvarphi)}_{\omega} \d{t}
+ \int_0^T\prod{\Ppi_{hk}^-(\vv_{hk}^-;\mm_{hk}^-,\mm_{hk}^=)}{\II_h(\mm_{hk}^-\times\vvarphi)}_{\omega} \d{t} \notag \\
& \quad + \int_0^T\prod{\hh_{hk}^{\Ttheta}}{\II_h(\mm_{hk}^-\times\vvarphi)}_{\omega} \d{t} 
 =: - \Cex I_{hk}^4 + I_{hk}^5 + I_{hk}^6 + I_{hk}^7 + I_{hk}^8,
\end{align}
and
\begin{eqnarray}\label{eq:variationalhk2}
 - \mu_0 I_{hk}^9 &:= & -\mu_0 \Int{0}{T} \prod{\partial_t \mm_{hk}}{\JJ_h\zzeta}_{\omega} \d{t} \notag \\ 
&  \stackrel{\eqref{eq:tps2_ellg_variational2}}{=} &
\mu_0 \Int{0}{T} \prod{\partial_t \hh_{hk}}{\JJ_h\zzeta}_{\Omega} \d{t} + \Int{0}{T} \prod{\sigma^{-1} \nabla \times \overline{\hh}_{hk}}{\nabla \times (\JJ_h\zzeta)}_{\Omega} \d{t} \notag \\
& =: & \mu_0 I_{hk}^{10} + I_{hk}^{11}.
\end{eqnarray}
\end{subequations}
With Lemma~\ref{lemma:extractsubsequences_ellg}, convergence of the integrals $I_{hk}^1,\dots,I_{hk}^7$  in~\eqref{eq:variationalhk1} towards their continuous counterparts in the variational formulation~\eqref{eq:variational_ellg} follows the lines of the proof of Theorem~\ref{thm:maintheorem}{\rm (ii)}. 
Thus, we only consider the integrals $I_{hk}^8,\dots,I_{hk}^{11}$ from~\eqref{eq:weak_variationalhk}:
As in~\cite{lt2013,lppt2015}, we get from Lemma~\ref{lemma:extractsubsequences_ellg} and the convergence properties of $\II_h$ and $\JJ_h$ that
\begin{align*}
I_{hk}^{10} &
 \to \Int{0}{T} \prod{\partial_t \hh}{\zzeta}_{\Omega} \d{t} \quad \textrm{as } h,k \rightarrow 0.
\end{align*}
For the remaining terms, Lemma~\ref{lemma:extractsubsequences_ellg} and the convergence properties of $\JJ_h$ yield that
\begin{align*}
I_{hk}^9 \to 
\Int{0}{T} \! \prod{\partial_t \mm}{\zzeta}_{\omega} \d{t}, 
\quad
I_{hk}^8 \to 
\int_0^T \! \prod{\hh}{\mm\times\vvarphi}_{\omega} \d{t},
\quad
 I_{hk}^{11} \to 
\Int{0}{T} \! \prod{\sigma^{-1}\nabla \times \hh}{ \nabla \times \zzeta}_{\Omega} \d{t}
\end{align*}
as $h,k\to0$.
This concludes the proof.
\end{proof}

\begin{proof}[Proof of Theorem~\ref{thm:maintheorem_ellg}{\rm (iii)}]
It remains to verify that $(\mm,\hh)$ from Lemma~\ref{lemma:extractsubsequences_ellg} satisfies the energy estimate of Definition~\ref{def:weak_ellg}{\rm(v)}. To that end, let $\tau\in(0,T)$ be arbitrary and $j \in \{1,\dots,N\}$ such that $\tau \in [t_{j-1},t_{j})$. 
Adopt the notation of the proof of Theorem~\ref{thm:maintheorem}(iii).
For any $i = 0,\dots,j-1$, Lemma~\ref{lemma:energy_ellg}{\rm (i)} shows that
\begin{align*}
&\mathcal{E}_{\textrm{ELLG}}(\mm_{hk}(t_{i+1}),\hh_{hk}(t_{i+1}))-\mathcal{E}_{\textrm{ELLG}}(\mm_{hk}(t_{i}),\hh_{hk}(t_{i})) \notag \\
&\stackrel{\eqref{eq:strongerenergyestimate_ellg}}{=} 
\frac{\Cex}{2} \, \norm{\nabla\mm_h^{i+1}}{\omega}^2 
- \frac{\Cex}{2} \, \norm{\nabla\mm_h^i}{\Omega}^2 + 
\frac{1}{2} \, \norm{\hh_h^{i+1}}{\omega}^2 - \frac{1}{2} \, \norm{\hh_h^i}{\Omega}^2 \notag \\
&\quad -\frac{1}{2} \, \prod{\ppi(\mm_h^{i+1})}{\mm_h^{i+1}}_{\omega} + \frac{1}{2} \, \prod{\ppi(\mm_h^i)}{\mm_h^i}_{\omega} - \prod{\ff^{i+1}}{\mm_h^{i+1}}_{\omega} + \prod{\ff^i}{\mm_h^i}_{\omega} \notag \\
&\stackrel{\phantom{\eqref{eq:energy_LLG}}}{\le} - k \, \prod{\weight_{M(k)}(\mm_h^i)\vv_h^i}{\vv_h^i}_{\omega} - \frac{\Cex }{2} \, k^2\rho(k) \, \norm{\nabla\vv_h^i}{\omega}^2 + k \, \prod{\Ppi_h^i(\vv_h^i;\mm_h^i,\mm_h^{i-1})}{\vv_h^i}_{\omega} \notag \\
&\quad +
k \, \prod{\ppi_h^i(\vv_h^i;\mm_h^i,\mm_h^{i-1})}{\vv_h^i}_{\omega}
-\frac{1}{2} \, \prod{\ppi(\mm_h^{i+1})}{\mm_h^{j+i}}_{\omega} + 
\frac{1}{2} \, \prod{\ppi(\mm_h^i)}{\mm_h^i}_{\omega} \notag \\
&\quad + k \, \prod{\ff(t_{i+1/2})}{\vv_h^i}_{\omega} \!- \prod{\ff^{i+1}}{\mm_h^{i+1}}_{\omega} + 
\prod{\ff^i}{\mm_h^i}_{\omega}
 + k \, \prod{\vv_h^i}{\gentimeh{\hh}{i} \!- \midh{\hh}{i}}_{\omega}
\\& \quad
 + k \, \prod{\vv_h^i - \dth{\mm}{i}}{\midh{\hh}{i}}_{\omega} .
\end{align*}
With the stronger assumptions on $\ppi$ and $\ff$ from~\eqref{eq:strongerboundedness}, we can follow the lines of the proof of Theorem~\ref{thm:maintheorem}{\rm(iii)}. This leads to
\begin{align}\label{eq:proofthmenergyeq1_ellg}
&\mathcal{E}_{\textrm{ELLG}}(\mm_{hk}^+(\tau),\hh_{hk}^+(\tau)) - \mathcal{E}_{\textrm{ELLG}}(\mm_h^0,\hh_{h}^0) + \int_0^{t_j}\prod{\weight_{M(k)}(\mm_{hk}^-)\vv_{hk}^-}{\vv_{hk}^-}_{\omega}\,\d{t} 
\notag \\ 
& \quad + \int_0^{t_j} \prod{\partial_t \ff_k}{\mm_{hk}^-}_{\omega}\,\d{t} 
- \int_0^{t_j} \prod{\Ppi_{hk}(\vv_{hk}^-;\mm_{hk}^-,\mm_{hk}^=)}{\vv_{hk}^-}_{\omega}\,\d{t} \notag \\
&\lesssim k\int_0^{t_j}\norm{\vv_{hk}^-}{\Omega}^2\,\d{t} 
+ k\int_0^{t_j}\norm{\vv_{hk}^-}{\Omega}\norm{\nabla\vv_{hk}^-}{\Omega}\,\d{t} \notag \\
&\quad + \int_0^{t_j}| \prod{\ppi_{hk}(\vv_{hk}^-;\mm_{hk}^-,\mm_{hk}^=) - \ppi(\mm_{hk}^-)}{\vv_{hk}^-}_{\omega} |\,\d{t} 
+ \int_0^{t_j}|\prod{\overline{\ff_{k}}-\ff}{\vv_{hk}^-}_{\omega}|\,\d{t} \notag \\
& \quad 
+ \int_0^{t_j}|\prod{\hh_{hk}^{\Ttheta}-\overline{\hh}_{hk}}{\vv_{hk}^-}_{\omega}|\,\d{t} 
+  \int_0^{t_j}|\prod{\overline{\hh}_{hk}}{\vv_{hk}^- - \partial_t \mm_{hk}}_{\omega}|\,\d{t} .
\end{align}
The only crucial term is the last one on the right-hand side of~\eqref{eq:proofthmenergyeq1_ellg}: Recall the Sobolev embedding $\HH{1}{\omega} \subset \LL{4}{\omega}$. Together with Lemma~\ref{lemma:energy_ellg}{\rm (iii)}, this yields that
\begin{align}\label{eq:crucialterm_ellg}
\begin{split}
&\int_0^{t_j}|\prod{\overline{\hh}_{hk}}{\vv_{hk}^- - \partial_t \mm_{hk}}_{\omega}|\,\d{t} 
\le \norm{\overline{\hh}_{hk}}{\L{\infty}{0,T;\LL{2}{\Omega}}} \norm{\vv_{hk}^- - \partial_t \mm_{hk}}{\L{1}{0,T;\LL{2}{\omega}}}
\\& \qquad
\stackrel{\eqref{eq:auxiliary_estimate3}}\lesssim
k \norm{\vv_{hk}^-}{\L{2}{0,T;\LL{4}{\omega}}}^2 \lesssim k \norm{\vv_{hk}^-}{\L{2}{0,T;\HH{1}{\omega}}}^2
\lesssim 
 k(1+ h^{-2}) \, \norm{\vv_{hk}^-}{\LL{2}{\omega_T}}^2 \longrightarrow 0
\end{split}
\end{align}
as $h,k \rightarrow 0$,
where we have used an inverse inequality and the assumption $k = \mathbf{o}(h^2)$. Arguing by lower semicontinuity, we conclude the proof as for Theorem~\ref{thm:maintheorem_ellg}.
\end{proof}

\section{Numerical experiments} 
\label{section:numerics}

This section provides some numerical experiments for Algorithm~\ref{alg:abtps} and Algorithm~\ref{alg:abtps_ellg}. 
Our implementation is based on the C++/Python library Netgen/NGSolve~\cite{ngsolve}.
The computation of the stray field $\hh_{\mathrm{s}} = -\nabla u$ requires the approximation of the magnetostatic potential $u \in H^1(\R^3)$, which solves the full space transmission problem
\begin{subequations} \label{eq:magnetostatic}
\begin{alignat}{2}
-\Delta u &= -\diver \mm &\quad& \textrm{in } \omega, \\
-\Delta u &= 0 && \textrm{in } \R^3\setminus\overline{\omega}, \\
u^{\mathrm{ext}} - u^{\mathrm{int}} &= 0 && \textrm{on } \partial \omega, \\
(\nabla u^{\mathrm{ext}} - \nabla u^{\mathrm{int}})\cdot{\nn} &= -\mm\cdot\nn && \textrm{on } \partial \omega, \\
u(\boldsymbol{x}) &= \mathcal{O}(\vert \boldsymbol{x} \vert^{-1}) && \textrm{as } \vert \boldsymbol{x} \vert \to \infty.
\end{alignat}
\end{subequations}
Here, the superscript \emph{ext} (resp.\ \emph{int}) refers to the traces of $u$ on $\partial \omega$ with respect to the exterior domain $\R^3 \setminus \overline{\omega}$ (resp.\ the interior domain $\omega$), and $\nn$ is the outer normal vector on $\partial \omega$. 
Recall from~\cite{praetorius2004} that $\ppi(\mm):=-\nabla u$ gives rise to a self-adjoint operator 
$\ppi\in L(\L{2}{\omega},\L{2}{\R^d})$ which satisfies the stronger stability assumption~\eqref{eq:strongerboundedness}.

To discretize~\eqref{eq:magnetostatic}, we employ the hybrid FEM-BEM method from~\cite{fk1990}. 
We note that the latter satisfies~\eqref{eq:assumptions:pi} with strong convergence in~\eqref{eq:pihkconvergence};
see~\cite[Section~4.4.1]{bffgpprs2014} or~\cite[Section~4.1]{prs2016} for details.
This part of the code builds upon the open-source Galerkin boundary element library BEM++~\cite{sbaps2015}.
The arising linear systems are solved with GMRES (resp.\ with CG for the hybrid FEM-BEM approach) with tolerance $10^{-12}$. The implicit first time-step of Algorithm~\ref{alg:abtps} is solved by the fixpoint iteration used in the proof of Theorem~\ref{thm:maintheorem}{\rm(i)} which is stopped if $\norm{\eeta_h^\ell - \eeta_h^{\ell-1}}{\LL{2}{\omega}} \le 10^{-10}$. 


\begin{table}
 \begin{tabular}{p{20mm}||p{15mm}||p{15mm}|p{15mm}|p{15mm}|p{15mm}|p{15mm}}
   & {\tt TPS2}\newline absolute & 
   {\tt TPS2}\newline relative & 
   {\tt TPS1+EE}\newline relative & 
   {\tt TPS1+AB}\newline relative & 
   {\tt TPS2+EE}\newline relative & 
   {\tt TPS2+AB}\newline relative \\
  \hline\hline
  $k=0.0016$            &       $0.63$ &  $100\%$ & $24.35\%$ & $28.13\%$& $31.80\%$& $35.48\%$ \\
  \hline
  $k=0.0008$            &        $0.66$ &  $100\%$   & $24.37\%$ & $27.89\%$ & $31.49\%$ & $35.03\%$ \\  
  \hline
  $k=0.0004$            &        $0.69$ &  $100\%$  & $24.93\%$  & $28.56\%$  & $31.62\%$  & $35.20\%$  \\  
  \hline
  $k=0.0002$            &        $0.71$ &  $100\%$  & $24.42\%$ & $27.84\%$ & $31.15\%$ & $34.63\%$  \\  
  \hline
  $k=0.0001$            &         $0.66$ &  $100\%$  & $26.70\%$ &  $30.30\%$ &  $33.84\%$ &  $37.48\%$  \\  
  \hline
 \end{tabular}
 \caption{Experiment of Section~\ref{example1}: Average computational time for one time-step of the different integrators, where we provide the absolute time (in s) for the algorithm ({\tt TPS2}) from~\cite{akst2014} as well as the relative times of all other integrators.}
 \label{example1:table:average_duration}
\end{table}

\begin{figure}[ht]
\centering
\begin{tikzpicture}
\pgfplotstableread{plots/cumulative_isolated.dat}{\data}
\begin{axis}[
xlabel={physical time},
ylabel={computational time in \si{\second}},
width = 110mm,
legend style={
legend pos= north west},
xmax=5,
xmin=0,
ymin=0,
ymax=10e3,
]
\addplot[blue,ultra thick] table[x=time, y=TPS2fullImpl] {\data};
\addplot[red,dotted,ultra thick] table[x=time, y=TPS2ab] {\data};
\addplot[green,dashed,ultra thick] table[x=time, y=TPS2ee] {\data};
\addplot[orange,dash pattern=on 1pt off 3pt on 3pt off 3pt,,ultra thick] table[x=time, y=TPS1ab] {\data};
\addplot[cyan,dash pattern=on 3pt off 6pt on 6pt off 6pt,ultra thick] table[x=time, y=TPS1ee] {\data};
\legend{{\tt TPS1+EE},{\tt TPS1+AB},{\tt TPS2+EE},{\tt TPS2+AB},{\tt TPS2}}
\legend{{\tt TPS2},{\tt TPS2+AB},{\tt TPS2+EE},{\tt TPS1+AB},{\tt TPS1+EE}} %
\end{axis}
\end{tikzpicture}
\caption{Experiment of Section~\ref{example1}: Cumulative computational time of the different integrators for $k = 4\cdot10^{-4}$.}
\label{example1:fig:duration}
\end{figure}
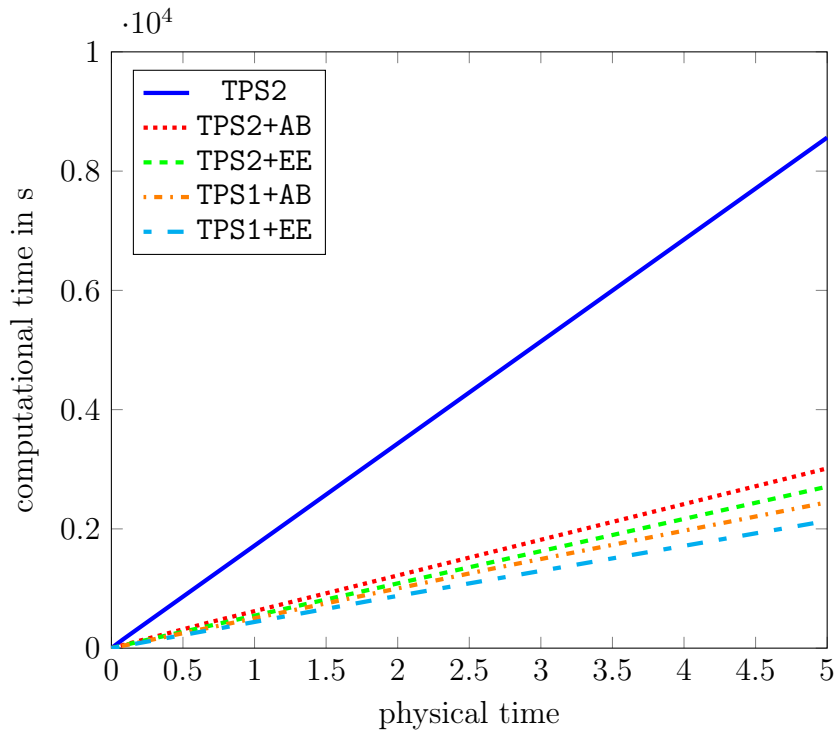
\begin{figure}[ht]
\centering
\begin{tikzpicture}
\pgfplotstableread{plots/H1_rates.dat}{\data}
\begin{loglogaxis}[
xlabel={Time-step size ($k$)},
ylabel={Error},
width = 110mm,
legend style={
legend pos= south west},
x dir=reverse
]
\addplot[only marks, blue, mark=x, mark size=5,ultra thick] table[x=k, y=TPS2fullImpl] {\data};
\addplot[only marks, red, mark=ball, ultra thick] table[x=k, y=TPS2ab] {\data};
\addplot[only marks, green, mark=triangle*, mark size=3,ultra thick] table[x=k, y=TPS2ee] {\data};
\addplot[only marks, orange, mark=square*, ultra thick] table[x=k, y=TPS1ab] {\data};
\addplot[only marks, cyan, mark=star, mark size=4,ultra thick] table[x=k, y=TPS1ee] {\data};
\addplot[purple,ultra thick] table[x=k, y expr={(\thisrow{k})/3}]{\data};
\addplot[black,ultra thick] table[x=k, y expr={(\thisrow{k}*\thisrow{k})*12}]{\data};
\node at (axis cs:1e-4,4e-5) [anchor=north east] {$\mathcal{O}(k)$};
\node at (axis cs:1e-4,1e-7) [anchor=east] {$\mathcal{O}(k^2)$};
\legend{{\tt TPS2},{\tt TPS2+AB},{\tt TPS2+EE},{\tt TPS1+AB},{\tt TPS1+EE}} %
\end{loglogaxis}
\end{tikzpicture}
\caption{Experiment of Section~\ref{example1}: Reference error $\max_j \norm{\mm_{hk_{\rm ref}}(t_j) - \mm_{hk}(t_j)}{\HH{1}{\omega}}$ for $k = 2^\ell \, k_{\rm ref}$ with $\ell  \in \{1,2,3,4,5\}$ and $k_{\rm ref} = 5\cdot10^{-5}$.}
\label{example1:fig:convergence_order}
\end{figure}

\subsection{Empirical convergence rates for LLG}
\label{example1}%
We aim to illustrate the accuracy and the computational effort of Algorithm~\ref{alg:abtps}.
We compare the following three strategies for the integration of the lower-order terms:
\begin{itemize}
\item {\tt TPS2}: fully implicit approach~\eqref{eq:llg_choice1} analyzed in~\cite{akst2014};
\item {\tt TPS2+AB}: Adams--Bashforth approach~\eqref{eq:llg_choice2} proposed and analyzed in this work;
\item {\tt TPS2+EE}: explicit Euler approach, where $\ppi_h^i(\vv_h^i;\mm_h^i,\mm_h^{i-1}) := \ppi_h^i(\mm_h^{i})$.
\end{itemize}
Besides Figure~\ref{fig:rhorates}, in which we compare the effect of different choices for the stabilization function $\rho$, we always employ the canonical choices~\eqref{eq:rhoM} for $M$ and $\rho$.
In addition, we consider the original (first-order) tangent plane scheme of~\cite{alouges2008a} with $\theta = 1/2$ (i.e., formal Crank--Nicolson-type integrator, which can be seen as a special case of Algorithm~\ref{alg:abtps} with $W_{M(k)} \equiv \alpha$ and $\rho \equiv 0$) together with the following two strategies for the integration of the lower-order terms:
\begin{itemize}
\item {\tt TPS1+AB}: Adams--Bashforth approach~\eqref{eq:llg_choice2};
\item {\tt TPS1+EE}: explicit Euler approach analyzed in~\cite{akt2012,bffgpprs2014}.
\end{itemize}
For the first time-step,
{\tt TPS2} is used
to preserve a possible second-order convergence.

To test the schemes, we use the model problem proposed in~\cite{prs2016}:
We consider the initial boundary value problem~\eqref{eq:llg} with $\omega = (0,1)^3$, $\mm^0 \equiv (1,0,0)$, $\alpha=1$, and $T=5$.
For the effective field~\eqref{eq:heffdef}, we choose $\ell_{\mathrm{ex}}=1$, a constant 
field $\ff \equiv (-2,-0.5,0)$, as well as an operator $\ppi$ which only involves the stray field, i.e., $\ppi(\mm) = -\nabla u$; see~\eqref{eq:magnetostatic}. 

We consider a fixed triangulation $\TT_h$ of $\omega$ generated by Netgen, which consists of \SI{3904}{} elements and 898 nodes (mesh size $h = 1/8$). The exact solution is unknown.
To compute the empirical convergence rates, we consider a reference solution $\mm_{hk_{\rm ref}}$ computed with {\tt TPS2}, using the above mesh and the time-step size $k_{\rm ref} = 5\cdot10^{-5}$.
Table~\ref{example1:table:average_duration} gives the average computational time per time-step for each of the considered five integrators.
In Figure~\ref{example1:fig:duration}, we plot the cumulative computational costs for the integration up to the final time $T$.
We observe a vast improvement if the lower-order terms (i.e., the stray field) are integrated explicitly in time.
The extended (first-order) tangent plane scheme from~\cite{alouges2008a} leads to the cheapest costs, since Algorithm~\ref{alg:abtps} involves additional computations in each time-step for $\lambda_h^i$ as well as the weighted mass matrix corresponding to $\prod{W_{M(k)}(\lambda_h^i)\,\cdot}{\cdot}_\omega$.

Figure~\ref{example1:fig:convergence_order} visualizes the experimental convergence order of the five integrators. As expected, {\tt TPS2} and {\tt TPS+AB} lead to second-order convergence in time. Essentially, both integrators even lead quantitatively to the same accuracy of the numerical solution. {\tt TPS2+EE} as well as {\tt TPS1+EE} yield first-order convergence, since the explicit Euler integration of the stray field is only first-order accurate. 
Differently from the classical $\theta$-method for linear second-order parabolic PDEs, due to the presence of the nodal projection, the original tangent plane scheme~\cite{alouges2008a} with $\theta=1/2$ (Crank--Nicolson-type {\tt TPS1+AB}) does not lead to any improvement of the convergence order in time (from first-order to second-order).

Overall, the proposed {\tt TPS2+AB} integrator appears to be the method of choice with respect to both computational time and empirical accuracy. 

Figure~\ref{fig:rhorates} shows the accuracy of the {\tt TPS2+AB} integrator for different choices of the stabilization $\rho(k) = k^{\delta}$ for $\delta \in \{0,0.1,0.2,0.3,0.4,0.5\}$, where $M(k) = |k \log k|^{-1}$ for all computations. As expected from Proposition~\ref{proposition:variational}, we observe an increase in convergence order for bigger values of $\delta$. In our example, however, the increase is stronger than the analysis predicts, e.g., for $\delta = 0.5$, Proposition~\ref{proposition:variational} predicts convergence order $3/2$, yet, Figure~\ref{fig:rhorates} shows order $2$.

\begin{figure}[ht]
\centering
\begin{tikzpicture}
\pgfplotstableread{plots/rho_rates.dat}{\data}
\begin{loglogaxis}[
xlabel={Time-step size ($k$)},
ylabel={Error},
width = 110mm,
legend style={
legend pos= south west},
x dir=reverse
]
\addplot[only marks, blue, mark=x, mark size=5,ultra thick] table[x=k, y=rhok00] {\data};
\addplot[only marks, red, mark=ball, ultra thick] table[x=k, y=rhok01] {\data};
\addplot[only marks, green, mark=triangle*, mark size=3,ultra thick] table[x=k, y=rhok02] {\data};
\addplot[only marks, orange, mark=square*, ultra thick] table[x=k, y=rhok03] {\data};
\addplot[only marks, cyan, mark=star, mark size=4,ultra thick] table[x=k, y=rhok04] {\data};
\addplot[only marks, magenta, mark=diamond*, mark size=3 , ultra thick] table[x=k, y=rhok05] {\data};
\addplot[purple,ultra thick] table[x=k, y expr={(\thisrow{k})/8}]{\data};
\addplot[magenta,ultra thick] table[x=k, y expr={(\thisrow{k}^1.5)}]{\data};
\addplot[black,ultra thick] table[x=k, y expr={(\thisrow{k}*\thisrow{k})*10}]{\data};
\node at (axis cs:1.5e-4,1.5e-5) [anchor=south west] {$\mathcal{O}(k)$};
\node at (axis cs:1.5e-4,4.2e-7) [anchor=south west] {$\mathcal{O}(k^{3/2})$};
\node at (axis cs:1e-4,1e-7) [anchor=north east] {$\mathcal{O}(k^2)$};
\legend{$\rho(k)=\revision{1}\hspace*{0.45cm}$, $\rho(k)=k^{0.1}$, $\rho(k)=k^{0.2}$, $\rho(k)=k^{0.3}$, $\rho(k)=k^{0.4}$, $\rho(k)=k^{0.5}$} %
\end{loglogaxis}
\end{tikzpicture}
\caption{
Experiment of Section~\ref{example1}: Reference error $\max_j \norm{\mm_{hk_{\textrm{ref}}}(t_j) - \mm_{hk}(t_j)}{\LL{2}{\Omega}}$ for $k = 2^\ell \, k_{\rm ref}$ with $\ell  \in \{1,2,3,4,5\}$ and $k_{\rm ref} = 5\cdot10^{-5}$.
}
\label{fig:rhorates}
\end{figure}

\subsection{Spintronic extensions of LLG}
\label{subsection:spintronic}
We consider two spintronic extensions of LLG, where the energy-based effective field is supplemented by terms which model the effect of the so-called spin transfer torque (STT)~\cite{slonczewski1996,berger1996}.
We show that these extended forms of LLG are covered by the abstract operator $\Ppi$ of Section~\ref{subsection:model_LLG} and perform physically relevant numerical experiments with the integrator {\tt TPS2+AB}; cf.~Section~\ref{example1}.
\subsubsection{Slonczewski model}
\label{subsubsec:slonczewski}
Since the discovery of the GMR effect~\cite{bbfvnpecfc1988,bgsz1989}, magnetic multilayers, i.e., systems consisting of alternating ferromagnetic and nonmagnetic sublayers, have become the subject of intense research in metal spintronics.
A phenomenological model to include the spin transfer torque in magnetic multilayer with current-perpendicular-to-plane injection geometry was proposed in~\cite{slonczewski1996}.
This model is covered by our framework for LLG~\eqref{eq:llg} by considering 
\begin{subequations}  \label{eq:slonczewski:Pi+DPi}
\begin{equation} \label{eq:slonczewski:Pi}
\boldsymbol{\Pi}: \LL{\infty}{\omega} \rightarrow \LL{2}{\omega},
\quad \boldsymbol{\Pi}(\vvarphi) := G(\vvarphi\cdot\boldsymbol{p}) \, \vvarphi \times \boldsymbol{p},
\end{equation}
where $\boldsymbol{p} \in \R^3$ with $\lvert \boldsymbol{p} \rvert = 1$ is constant, while the function $G: [-1,1] \to \R$ belongs to $C^1([-1,1])$.
Using the chain rule and the product rule, we obtain that
\begin{equation*}
\partial_t \boldsymbol{\Pi}(\vvarphi) 
= G'(\vvarphi\cdot\boldsymbol{p}) (\partial_t \vvarphi \cdot \boldsymbol{p}) \, \vvarphi \times \boldsymbol{p}
+ G(\vvarphi\cdot\boldsymbol{p}) \, \partial_t \vvarphi \times \boldsymbol{p}.
\end{equation*}
Hence, we recover the framework of Section~\ref{section:llg:algorithm} if we consider the operator
\begin{equation}
\DPi{\vvarphi}{\ppsi} 
= G'(\vvarphi\cdot\boldsymbol{p}) (\ppsi \cdot \boldsymbol{p}) \, \vvarphi \times \boldsymbol{p}
+ G(\vvarphi\cdot\boldsymbol{p}) \, \ppsi \times \boldsymbol{p}
\end{equation}
as well as the ``discrete'' operators
\begin{equation}
\boldsymbol{\Pi}_h(\vvarphi)
= \boldsymbol{\Pi}(\vvarphi)
\quad \text{and} \quad
\DPih{\vvarphi}{\ppsi}
= \DPi{\vvarphi}{\ppsi};
\end{equation}
\end{subequations}
see Remark~\ref{remark:algorithm:llg}\textrm{(ii)}.
We show that the resulting approximate operators $\boldsymbol{\Pi}_h^i$, defined in the implicit case by~\eqref{eq:llg_choice1b} and in the explicit case by~\eqref{eq:llg_choice2b}, and the piecewise constant time reconstruction $\boldsymbol{\Pi}_{hk}^-$ satisfy the assumptions~\eqref{eq:assumptions:Pi} of Theorem~\ref{thm:maintheorem}.

Let $i=0,\dots,N-1$ and consider arbitrary $\ppsi_h, \widetilde{\ppsi}_h \in \Vh$ and $\vvarphi_{h},\widetilde{\vvarphi}_{h} \in \Mh$.
In the implicit case~\eqref{eq:llg_choice1b}, the Lipschitz-type continuity~\eqref{eq:Pihlipschitz} follows from the estimate
\begin{equation*}
\begin{split}
& \norm{\Pih{i}{\ppsi_h}{\vvarphi_h}{\widetilde{\vvarphi}_h} - \Pih{i}{\widetilde{\ppsi}_h}{\vvarphi_h}{\widetilde{\vvarphi}_h}}{\omega}
\stackrel{\eqref{eq:llg_choice1b}}{=} \frac{k}{2} \norm{\DPih{\vvarphi_h}{\ppsi_h} - \DPih{\vvarphi_h}{\widetilde{\ppsi}_h}}{\omega} \\
& \quad \stackrel{\eqref{eq:slonczewski:Pi+DPi}}{=} \frac{k}{2} \norm{
G'(\vvarphi_h\cdot\boldsymbol{p}) [(\ppsi_h-\widetilde{\ppsi}_h) \cdot \boldsymbol{p}] \, \vvarphi_h \times \boldsymbol{p}
+ G(\vvarphi_h\cdot\boldsymbol{p}) \, (\ppsi_h-\widetilde{\ppsi}_h) \times \boldsymbol{p}
}{\omega}
\lesssim k \norm{\ppsi_h-\widetilde{\ppsi}_h}{\omega},
\end{split}
\end{equation*}
where the hidden constant in the last estimate depends only on $\norm{G}{C^1([-1,1])}$.
In the explicit case~\eqref{eq:llg_choice2b}, \eqref{eq:Pihlipschitz} is trivially satisfied.

We prove the stability estimate~\eqref{eq:Pihstability}: In the implicit case~\eqref{eq:llg_choice1b}, it holds that
\begin{equation*}
\begin{split}
& \prod{\Pih{i}{\ppsi_h}{\vvarphi_{h}}{\widetilde{\vvarphi}_{h}}}{\ppsi_h}_{\omega} 
\stackrel{\eqref{eq:llg_choice1b}}{=}
\prod{\Ppi_h(\vvarphi_{h}) + \frac{k}{2} \DPih{\vvarphi_{h}}{\ppsi_h}}{\ppsi_h}_{\omega}
\\&\quad
\stackrel{\eqref{eq:slonczewski:Pi+DPi}}{=}
\prod{G(\vvarphi_{h}\cdot\boldsymbol{p}) \vvarphi_{h} \times \boldsymbol{p}}{\ppsi_h}_{\omega}
+ \frac{k}{2} \prod{G'(\vvarphi_{h}\cdot\boldsymbol{p})(\ppsi_h \cdot \boldsymbol{p}) \vvarphi_{h} \times \boldsymbol{p}}{\ppsi_h}_{\omega} \\
& \qquad + \frac{k}{2} \prod{G(\vvarphi_{h}\cdot\boldsymbol{p})(\ppsi_h \times \boldsymbol{p})}{\ppsi_h}_{\omega} 
\\&\quad
\stackrel{\phantom{\eqref{eq:slonczewski:Pi+DPi}}}{\lesssim}
\norm{\ppsi_h}{\omega} \left( \norm{\vvarphi_{h}}{\omega} + k \norm{\ppsi_h}{\omega} \right).
\end{split}
\end{equation*}
In the explicit case~\eqref{eq:llg_choice2b}, it holds that
\begin{equation*}
\begin{split}
& \prod{\Pih{i}{\ppsi_h}{\vvarphi_{h}}{\widetilde{\vvarphi}_{h}}}{\ppsi_h}_{\omega} 
\stackrel{\eqref{eq:llg_choice2b}}{=}
\prod{\Ppi_h(\vvarphi_{h}) + 
\frac{1}{2} \DPih{\vvarphi_{h}}{\vvarphi_{h}} - \frac{1}{2} \DPih{\vvarphi_{h}}{\widetilde{\vvarphi}_{h}}}{\ppsi_h}_{\omega}
\notag \\
& \stackrel{\eqref{eq:slonczewski:Pi+DPi}}{=} 
\prod{G(\vvarphi_{h}\cdot\boldsymbol{p}) \vvarphi_{h} \times \boldsymbol{p}}{\ppsi_h}_{\omega}
+ \frac{1}{2} \prod{G'(\vvarphi_{h}\cdot\boldsymbol{p})(\vvarphi_{h} \cdot \boldsymbol{p}) \vvarphi_{h} \times \boldsymbol{p}}{\ppsi_{h}}_{\omega} \\
& \quad + \frac{1}{2} \prod{G(\vvarphi_{h}\cdot\boldsymbol{p})(\vvarphi_{h} \times \boldsymbol{p})}{\ppsi_{h}}_{\omega}
- \frac{1}{2} \prod{G'(\vvarphi_{h}\cdot\boldsymbol{p})(\widetilde{\vvarphi}_{h} \cdot \boldsymbol{p}) \vvarphi_{h} \times \boldsymbol{p}}{\ppsi_{h}}_{\omega} \\
& \qquad - \frac{1}{2} \prod{G(\vvarphi_{h}\cdot\boldsymbol{p})(\widetilde{\vvarphi}_h \times \boldsymbol{p})}{\ppsi_{h}}_{\omega} \\
& \stackrel{\phantom{\eqref{eq:slonczewski:Pi+DPi}}}{\lesssim}
\norm{\ppsi_h}{\omega} \left( \norm{\vvarphi_{h}}{\omega}+ \norm{\widetilde{\vvarphi}_{h}}{\omega} \right).
\end{split}
\end{equation*}

Finally, we prove the consistency property~\eqref{eq:Pihkconvergence} with strong convergence: To that end, let $\ppsi_{hk}$ in $\LL{2}{0,T;\Vh}$ and $\vvarphi_{hk}$, $\widetilde{\vvarphi}_{hk}$ in $\LL{2}{0,T;\Mh}$ with $\ppsi_{hk} \rightharpoonup \ppsi$ and $\vvarphi_{hk}, \widetilde{\vvarphi}_{hk} \rightarrow \vvarphi$ in $\LL{2}{\omega_T}$. As in~\cite{bffgpprs2014}, the Lebesgue dominated convergence theorem proves that
\begin{align*}
\Ppi_h(\vvarphi_{hk}) \rightarrow \Ppi(\vvarphi) 
\quad 
\textrm{in } \LL{2}{\omega_T} 
\quad 
\textrm{as } h,k \rightarrow 0.
\end{align*}
Moreover, we infer from $G \in C^1([-1,1])$ and $\norm{\vvarphi_{hk}}{\LL{\infty}{\omega_T}} = 1$, that
\begin{align*}
& \norm{ \DPih{ \vvarphi_{hk} }{ \vvarphi_{hk} - \widetilde{\vvarphi}_{hk} } }{\LL{2}{\omega}} + 
\norm{ \DPih{ \vvarphi_{hk} }{ k \ppsi_{hk} } }{\LL{2}{\omega}} \notag \\
& \quad \stackrel{\eqref{eq:slonczewski:Pi+DPi}}{\lesssim} 
\norm{ \vvarphi_{hk} - \widetilde{\vvarphi}_{hk} }{\LL{2}{\omega}} + 
k \norm{ \ppsi_{hk} }{\LL{2}{\omega}} \rightarrow 0 
\quad 
\textrm{as } h,k \rightarrow 0.
\end{align*}
Thus, for both, the implicit case~\eqref{eq:llg_choice1b} and the explicit case~\eqref{eq:llg_choice2b}, we get consistency~\eqref{eq:Pihkconvergence} with strong convergence.
\begin{figure}[ht]
\centering
\begin{subfigure}{0.46\textwidth}
\centering
\includegraphics[width=0.9\textwidth]{./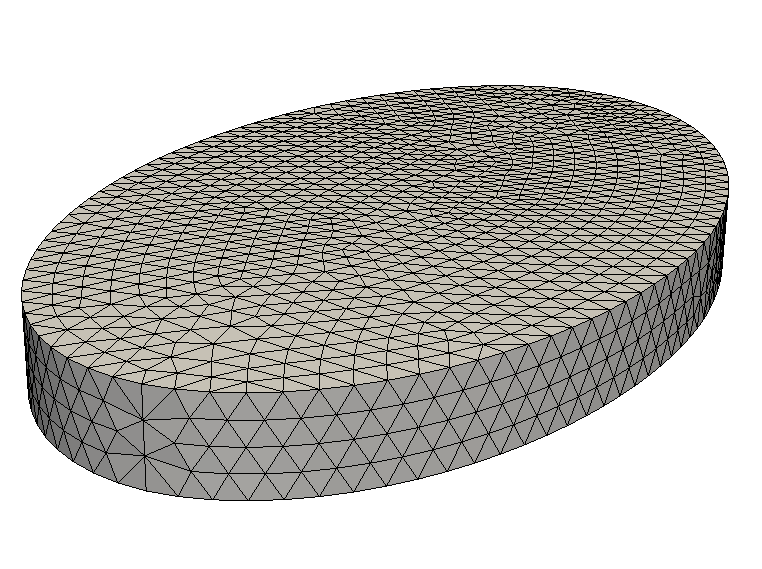}
\caption{Computational domain.}
\label{subfig:sublayer}
\end{subfigure}
\begin{subfigure}{0.49\textwidth}
\centering
\begin{tikzpicture}
\pgfplotstableread{plots/switching.dat}{\switching}
\pgfplotstableread{plots/energy.dat}{\energy}
\begin{axis}[
axis y line*=left,
xlabel={Time [\si{\nano\second}]},
ylabel={$\langle m_x \rangle$},
ylabel shift=-0.5cm,
width = 0.9\textwidth,
xmin=0,
xmax=3,
ymin=-1.1,
ymax=1.1,
]
\addplot[red,ultra thick] table[x=t, y=mx]{\switching};
\label{plot:mx}
\end{axis}
\begin{axis}[
axis y line*=right,
axis x line=none,
ylabel={Energy [\SI{e-18}{\joule}]},
ylabel shift=-0.2cm,
width=0.9\textwidth,
xmin=0,
xmax=3,
ymin=1,
ymax=11,
]
\addplot[blue,dashed,ultra thick] table[x=t, y=total]{\energy};
\end{axis} 
\end{tikzpicture}
\caption{Time evolutions of $\langle m_x \rangle$ (red solid line, left scale) and of the total energy (blue dashed line, right scale) during the switching process.}
\label{fig:switching}
\end{subfigure}
\caption{
Experiment of Section~\ref{subsubsec:slonczewski}:
Current-induced switching of a ferromagnetic film.}
\end{figure}

To test the effectivity of our algorithm, we simulate the writing process of an STT random access memory~\cite{hyybhyyshfnk2005} and reproduce the switching of a ferromagnetic film without external field.
The computational domain is an elliptic cylinder of height $d=$ \SI{10}{\nano\meter} (aligned with the $z$-direction) and elliptic cross section with semiaxis lengths $a=$ \SI{60}{\nano\meter} and $b=$ \SI{35}{\nano\meter} (parallel to the $xy$-plane); see Figure~\ref{subfig:sublayer}.
The film is supposed to be the free layer of a magnetic trilayer, which also includes a second ferromagnetic layer (the so-called fixed layer) with constant uniform magnetization $\boldsymbol{p} = (1,0,0)$ and a conducting nonmagnetic spacer.
For the material parameters, we choose the values of permalloy: damping parameter $\alpha=$~\num{0.1}, saturation magnetization $M_s=$ \SI{8.0e5}{\ampere\per\meter}, exchange stiffness constant $A=$ \SI{1.3e-11}{\joule\per\meter}, uniaxial anisotropy with constant $K=$ \SI{5.0e2}{\joule\per\meter\cubed} and easy exis $\boldsymbol{a} = (1,0,0)$.
With these choices, the effective field takes the form~\eqref{eq:heffdef} with
\begin{equation*}
\ell_{\mathrm{ex}} = \sqrt{\frac{2A}{\mu_0 M_s^2}} \approx \SI{5.7}{\milli\meter},
\quad \ppi(\mm)= \frac{2K}{\mu_0 M_s^2} (\boldsymbol{a}\cdot\mm)\boldsymbol{a} - \nabla u,
\quad \text{and} \quad \ff \equiv \0,
\end{equation*}
where $\mu_0 =$ \SI{4 \pi e-7}{\newton\per\ampere\squared} is the vacuum permeability and $u$ is the magnetostatic potential, solution of the transmission problem~\eqref{eq:magnetostatic}.

Starting from the uniform initial configuration $\mm^0 \equiv (-1,0,0)$, we solve~\eqref{eq:llg} with $\Ppi \equiv \0$ for \SI{1}{\nano\second} to reach a relaxed state.
Then, we inject a spin-polarized electric current with intensity $J_e =$ \SI{1e11}{\ampere\per\meter\squared} for \SI{1}{\nano\second}.
The resulting spin transfer torque is modeled by the operator~\eqref{eq:slonczewski:Pi}, where the function $G$ takes the phenomenological expression
\begin{equation*}
G(x) = \frac{\hbar J_e}{e \mu_0 M_s^2 d} \left[ \frac{(1+P)^3 (3+x)}{4 P^{3/2}} -4 \right]^{-1}
\quad \text{for all } x \in [-1,1];
\end{equation*}
see~\cite{slonczewski1996}.
Here, $\hbar =$ \SI{1.054571800e-34}{\joule\second} is the reduced Planck constant, $e =$ \SI{1.602176621e-19}{\coulomb} is the elementary charge, while $P = 0.8$ is the polarization parameter.
We solve~\eqref{eq:llg} with $\Ppi \equiv \0$ for \SI{1}{\nano\second} to relax the system to the new equilibrium.

For the spatial discretization, we consider a tetrahedral triangulation with mesh size \SI{3}{\nano\meter} (\num{15 885} elements, \num{3 821} nodes) generated by Netgen.
For the time discretization, we consider a constant time-step size of~\SI{0.1}{\pico\second} (\num{30 000} time-steps).

The time evolutions of the average $x$-component $\langle m_x \rangle$ of the magnetization in the free layer and of the total energy~\eqref{eq:energy_LLG} are depicted in Figure~\ref{fig:switching}.
Since the size of the film is well under the single-domain limit (see, e.g., \cite{adm2015}), the initial uniform magnetization configuration is preserved by the first relaxation process.
By applying a perpendicular spin-polarized current, the uniform magnetization of the free layer can be switched from $(-1,0,0)$ to $\boldsymbol{p}=(1,0,0)$.
The fundamental physics underlying this phenomenon is understood as the mutual transfer of spin angular momentum between the $\boldsymbol{p}$-polarized conduction electrons and the magnetization of the film.
During the switching process, the classical energy dissipation modulated by the damping parameter $\alpha$ is lost as an effect of the Slonczewski contribution; cf.\ the fourth term on the left-hand side of~\eqref{eq:strongerenergyestimate_LLG}.
The new state is also stable and is preserved by the final relaxation process.

\subsubsection{Zhang--Li model}\label{subsubsection:zhangli}
In~\cite{zl2004}, the authors derived an extended form of LLG to model the effect of an electric current flow on the magnetization dynamics in single-phase samples characterized by a current-in-plane injection geometry.
A similar equation was obtained in a phenomenological way in~\cite{tnms2005} for the description of the current-driven motion of domain walls in patterned nanowires.
Here, the operator takes the form
\begin{subequations}  \label{eq:zhangli:Pi+DPi}
\begin{equation} \label{eq:zhangli:Pi}
\boldsymbol{\Pi}: \HH{1}{\omega} \cap \LL{\infty}{\omega} \rightarrow \LL{2}{\omega},
\quad \boldsymbol{\Pi}(\vvarphi) := \vvarphi \times ( \uu \cdot \nabla ) \vvarphi + \beta ( \uu \cdot \nabla ) \vvarphi,
\end{equation}
where $\uu \in \LL{\infty}{\omega}$ and $\beta>0$ is constant.
The product rule yields that
\begin{equation*}
\partial_t \boldsymbol{\Pi}(\vvarphi) 
= \partial_t \vvarphi \times ( \uu \cdot \nabla ) \vvarphi
+ \vvarphi \times ( \uu \cdot \nabla ) \partial_t \vvarphi
+ \beta ( \uu \cdot \nabla ) \partial_t \vvarphi.
\end{equation*}
Hence, we recover the framework of Section~\ref{section:llg:algorithm} if we consider the operators
\begin{gather}
\DPi{\vvarphi}{\ppsi} 
= \ppsi \times ( \uu \cdot \nabla ) \vvarphi
+ \vvarphi \times ( \uu \cdot \nabla ) \ppsi
+ \beta ( \uu \cdot \nabla ) \ppsi, \\
\boldsymbol{\Pi}_h(\vvarphi)
= \boldsymbol{\Pi}(\vvarphi),
\quad \text{and} \quad
\DPih{\vvarphi}{\ppsi}
= \DPi{\vvarphi}{\ppsi}.
\end{gather}
\end{subequations}
Under the additional assumption
\begin{equation} \label{eq:extra_m0}
\sup_{h>0} \norm{\nabla\mm_h^0}{\LL{\infty}{\omega}} \leq C,
\end{equation}
we show that Theorem~\ref{thm:maintheorem}{\rm (a)}-{\rm (b)} still holds for the Adams--Bashforth approach from Remark~\ref{remark:algorithm:llg}{\rm (ii)}. To see this, recall Remark~\ref{remarks:llg}{\rm (iv)}. 
 
First, note that the Lipschitz-type continuity~\eqref{eq:Pihlipschitz} is trivially satisfied in the explicit case ($i=1,\dots,N-1$).
In the implicit case ($i=0$), it is not fulfilled for all $\ppsi_h, \widetilde{\ppsi}_h \in \Vh$ and $\vvarphi_{h},\widetilde{\vvarphi}_{h} \in \Mh$.
However, thanks to~\eqref{eq:extra_m0}, the desired inequality is satisfied in the specific situation of Remark~\ref{remarks:llg}{\rm (iv)}. Indeed, for arbitrary $\ppsi_h, \widetilde{\ppsi}_h \in \Vh$, it holds that
\begin{equation}\label{eq:lipschitz_zhangli}
\begin{split}
& \norm{\Pih{0}{\ppsi_h}{\mm_h^0}{\mm_h^{-1}} - \Pih{0}{\widetilde{\ppsi}_h}{\mm_h^0}{\mm_h^{-1}}}{\omega}
\stackrel{\eqref{eq:llg_choice1b}}{=} \frac{k}{2} \norm{\DPih{\mm_h^0}{\ppsi_h} - \DPih{\mm_h^0}{\widetilde{\ppsi}_h}}{\omega} \\
& \stackrel{\eqref{eq:zhangli:Pi+DPi}}{=} \frac{k}{2} \norm{(\ppsi_h-\widetilde{\ppsi}_h) \times ( \uu \cdot \nabla ) \mm_h^0
+ \mm_h^0 \times ( \uu \cdot \nabla ) (\ppsi_h-\widetilde{\ppsi}_h)
+ \beta ( \uu \cdot \nabla ) (\ppsi_h-\widetilde{\ppsi}_h)}{\omega} \\
& \stackrel{\phantom{\eqref{eq:zhangli:Pi+DPi}}}{\lesssim} k \, \norm{\ppsi_h-\widetilde{\ppsi}_h}{\HH{1}{\omega}},
\end{split}
\end{equation}
where the hidden constant depends on $C$ and $\norm{\uu}{\LL{\infty}{\omega}}$.

Next, we prove the stability estimate~\eqref{eq:Pihstability}: In the implicit case~\eqref{eq:llg_choice1b}, it holds that
\begin{equation*}
\begin{split}
& \prod{\Pih{i}{\ppsi_h}{\vvarphi_{h}}{\widetilde{\vvarphi}_{h}}}{\ppsi_h}_{\omega} 
\stackrel{\eqref{eq:llg_choice1b}}{=}
\prod{\Ppi_h(\vvarphi_{h}) + \frac{k}{2} \DPih{\vvarphi_{h}}{\ppsi_h}}{\ppsi_h}_{\omega} \\
& \quad
\stackrel{\eqref{eq:zhangli:Pi+DPi}}{=}
\prod{\vvarphi_{h} \times ( \uu \cdot \nabla ) \vvarphi_{h}}{\ppsi_h}_{\omega} + \beta \prod{( \uu \cdot \nabla ) \vvarphi_{h}}{\ppsi_h}_{\omega} 
\\
& \qquad  + \frac{k}{2} \prod{\vvarphi_{h} \times ( \uu \cdot \nabla ) \ppsi_h}{\ppsi_h}_{\omega}
+ \frac{\beta}{2} \, k \, \prod{( \uu \cdot \nabla ) \ppsi_h}{\ppsi_h}_{\omega} 
\\&\quad \stackrel{\phantom{\eqref{eq:zhangli:Pi+DPi}}}{\lesssim}
 \norm{\ppsi_h}{\omega} 
 \big( \norm{\nabla \vvarphi_{h}}{\omega} + k \norm{\nabla \ppsi_h}{\omega} \big).
\end{split}
\end{equation*}
In the explicit case~\eqref{eq:llg_choice2b}, it holds that
\begin{equation*}
\begin{split}
& \prod{\Pih{i}{\ppsi_h}{\vvarphi_{h}}{\widetilde{\vvarphi}_{h}}}{\ppsi_h}_{\omega} 
\stackrel{\eqref{eq:llg_choice2b}}{=}
\prod{\Ppi_h(\vvarphi_{h}) + 
\frac{1}{2} \DPih{\vvarphi_{h}}{\vvarphi_{h}} - \frac{1}{2} \DPih{\vvarphi_{h}}{\widetilde{\vvarphi}_{h}}}{\ppsi_h}_{\omega}
\notag \\
& \stackrel{\eqref{eq:zhangli:Pi+DPi}}{=} 
\prod{\vvarphi_{h} \times ( \uu \cdot \nabla ) \vvarphi_{h}}{\ppsi_h}_{\omega} 
+ \beta \prod{( \uu \cdot \nabla ) \vvarphi_{h}}{\ppsi_h}_{\omega} 
+ \prod{\vvarphi_{h} \times ( \uu \cdot \nabla ) \vvarphi_{h}}{\ppsi_h}_{\omega} \\
& \quad
+ \frac{\beta}{2} \, \prod{( \uu \cdot \nabla ) \vvarphi_{h}}{\ppsi_h}_{\omega} 
- \frac{1}{2} \, \prod{\widetilde{\vvarphi}_{h} \times ( \uu \cdot \nabla ) \vvarphi_{h}}{\ppsi_h}_{\omega}  \\
& \quad
- \frac{1}{2} \, \prod{\vvarphi_{h} \times ( \uu \cdot \nabla ) \widetilde{\vvarphi}_{h}}{\ppsi_h}_{\omega} 
-\frac{\beta}{2} \, \prod{( \uu \cdot \nabla ) \widetilde{\vvarphi}_{h}}{\ppsi_h}_{\omega} \\
& \stackrel{\phantom{\eqref{eq:zhangli:Pi+DPi}}}{\lesssim}
\norm{\ppsi_h}{\omega} \big( \norm{\nabla \vvarphi_{h}}{\omega}+ \norm{\nabla \widetilde{\vvarphi}_{h}}{\omega} \big).
\end{split}
\end{equation*}

To see~\eqref{eq:Pihkconvergence}, let $\zzeta \in \LL{2}{\omega_T}$. We show that
\begin{align}\label{eq:consistency_zhangli}
\Int{0}{T} \prod{ \Ppi_{hk}^-(\vv_{hk}^- ; \mm_{hk}^- , \mm_{hk}^= ) }{ \zzeta }_{\omega} \d{t} 
\rightarrow 
\Int{0}{T} \prod{ \Ppi(\mm) }{ \zzeta }_{\omega} \d{t} 
\quad \textrm{as } h,k \rightarrow 0.
\end{align}
Unwrapping the Adams--Bashforth approach from Remark~\ref{remark:algorithm:llg}{\rm (ii)}, we see that
\begin{align*}
& \Int{0}{T} \prod{ \Ppi_{hk}^-(\vv_{hk}^- ; \mm_{hk}^- , \mm_{hk}^= ) }{ \zzeta }_{\omega} \d{t}  \\
& 
= \Int{0}{k} \prod{ \Ppi_{hk}(\mm_{hk}^-) }{ \zzeta }_{\omega} \d{t}
+ \frac{k}{2} \Int{0}{k} \prod{ \DPih{\vv_{hk}^-}{\mm_{hk}^-} }{ \zzeta }_{\omega} \d{t} \\
& \qquad 
+ \Int{k}{T} \prod{ \Ppi_{hk}(\mm_{hk}^-) }{ \zzeta }_{\omega} \d{t} 
+ \frac12 \Int{k}{T} \prod{ \DPih{\mm_{hk}^-}{\mm_{hk}^-}}{ \zzeta }_{\omega} \d{t} 
- \frac12 \Int{k}{T} \prod{ \DPih{\mm_{hk}^-}{\mm_{hk}^=}}{ \zzeta }_{\omega} \d{t} \\
& =: T_{hk}^1 + \dots + T_{hk}^5.
\end{align*}
With the definitions from~\eqref{eq:zhangli:Pi+DPi}, the convergence properties from Lemma~\ref{lemma:extractsubsequences_ellg} prove that
\begin{align*}
T_{hk}^1 \rightarrow 0 
\quad \textrm{and} \quad 
T_{hk}^3 + T_{hk}^4 + T_{hk}^5 \rightarrow 
\Int{0}{T} \prod{\Ppi(\mm)}{\zzeta}_{\Omega} \d{t}
\quad \textrm{as } h,k \rightarrow 0.
\end{align*}
It remains to show that $T_{hk}^{2} \rightarrow 0$ as $h,k \rightarrow 0$. Note that
\begin{align*}
T_{hk}^2 
& \stackrel{\eqref{eq:zhangli:Pi+DPi}}{=} 
\frac{k}{2} \Int{0}{k} \prod{\vv_{hk}^- \times (\uu \cdot \nabla) \mm_{hk}^-}{\zzeta}_{\omega} \d{t} \notag \\
& \quad + 
\frac{k}{2} \Int{0}{k} \prod{\mm_{hk}^- \times (\uu \cdot \nabla) \vv_{hk}^-}{\zzeta}_{\omega} \d{t}
+
\frac{\beta k}{2} \Int{0}{k} \prod{(\uu \cdot \nabla) \vv_{hk}^-}{\zzeta}_{\omega} \d{t}.
\end{align*}
With the convergence properties from Lemma~\ref{lemma:extractsubsequences_ellg}, the last two terms of $T_{hk}^2$ go to $0$ as $h,k \rightarrow 0$. To deal with the first term, we derive from $\mm_{hk}^-(t)|_{[0,k)} = \mm_h^0$ that
\begin{align*}
&\frac{k}{2} \Int{0}{k} \prod{\vv_{hk}^- \times (\uu \cdot \nabla) \mm_{hk}^-}{\zzeta}_{\omega} \d{t}
 = 
\frac{k}{2} \Int{0}{k} \prod{\vv_{hk}^- \times (\uu \cdot \nabla) \mm_{h}^0}{\zzeta}_{\omega} \d{t} \\
& \qquad \lesssim k \, 
\norm{\uu}{\LL{\infty}{\omega}} 
\norm{\nabla \mm_{h}^0}{\LL{\infty}{\omega_T}} 
\norm{\vv_{hk}^-}{\omega_T}  
\norm{\zzeta}{\omega_T} 
\stackrel{\eqref{eq:extra_m0}}{\rightarrow} 0
\quad \textrm{as } h,k \rightarrow 0.
\end{align*}
This verifies~\eqref{eq:consistency_zhangli}.
\begin{figure}[ht]
\begin{subfigure}{0.48\textwidth}
\includegraphics[width=0.9\textwidth]{./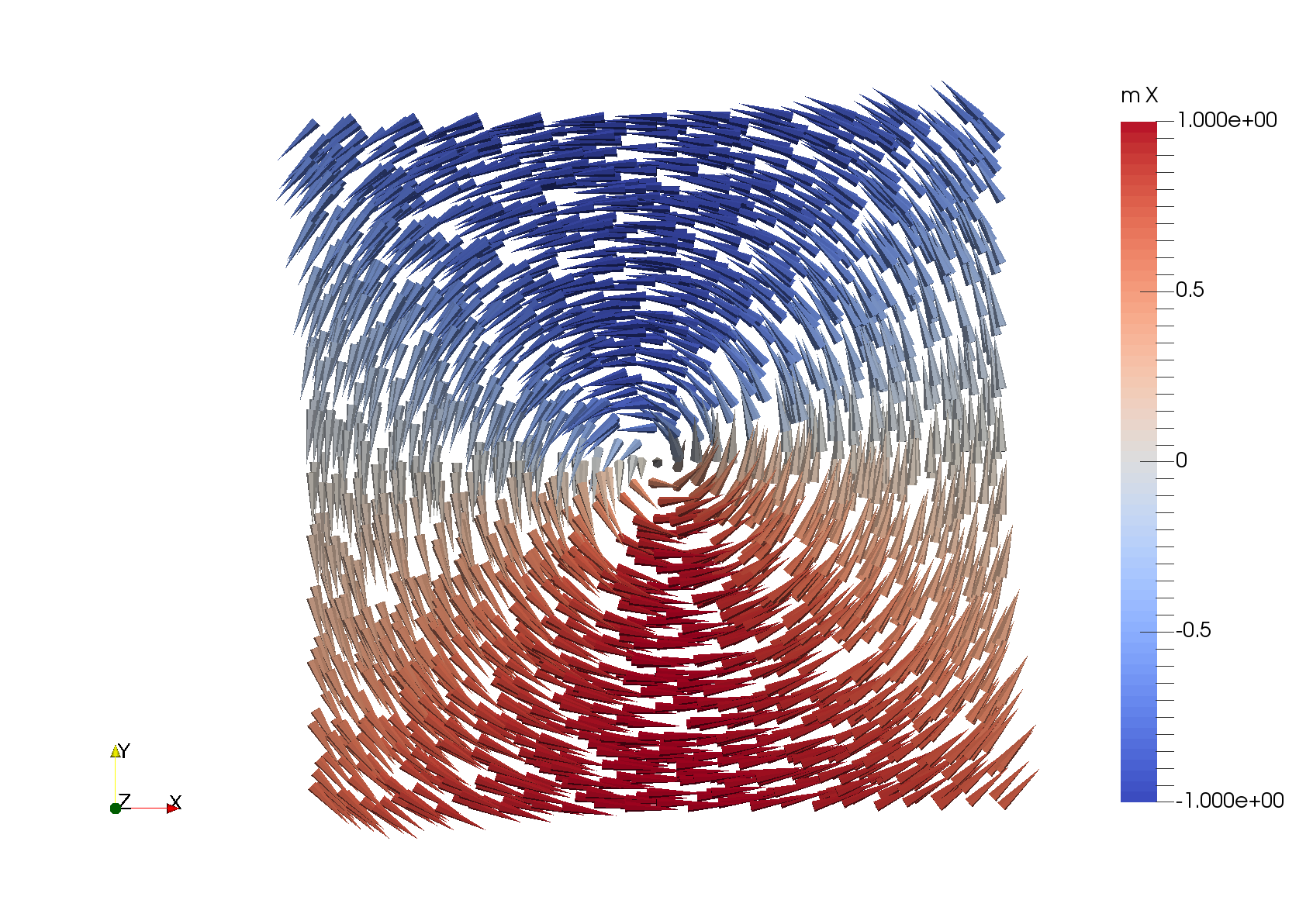}
\caption{Initial vortex state.}
\label{subfig:mumag5_initial}
\end{subfigure}\hspace*{\fill}
\begin{subfigure}{0.48\textwidth}
\includegraphics[width=0.9\textwidth]{./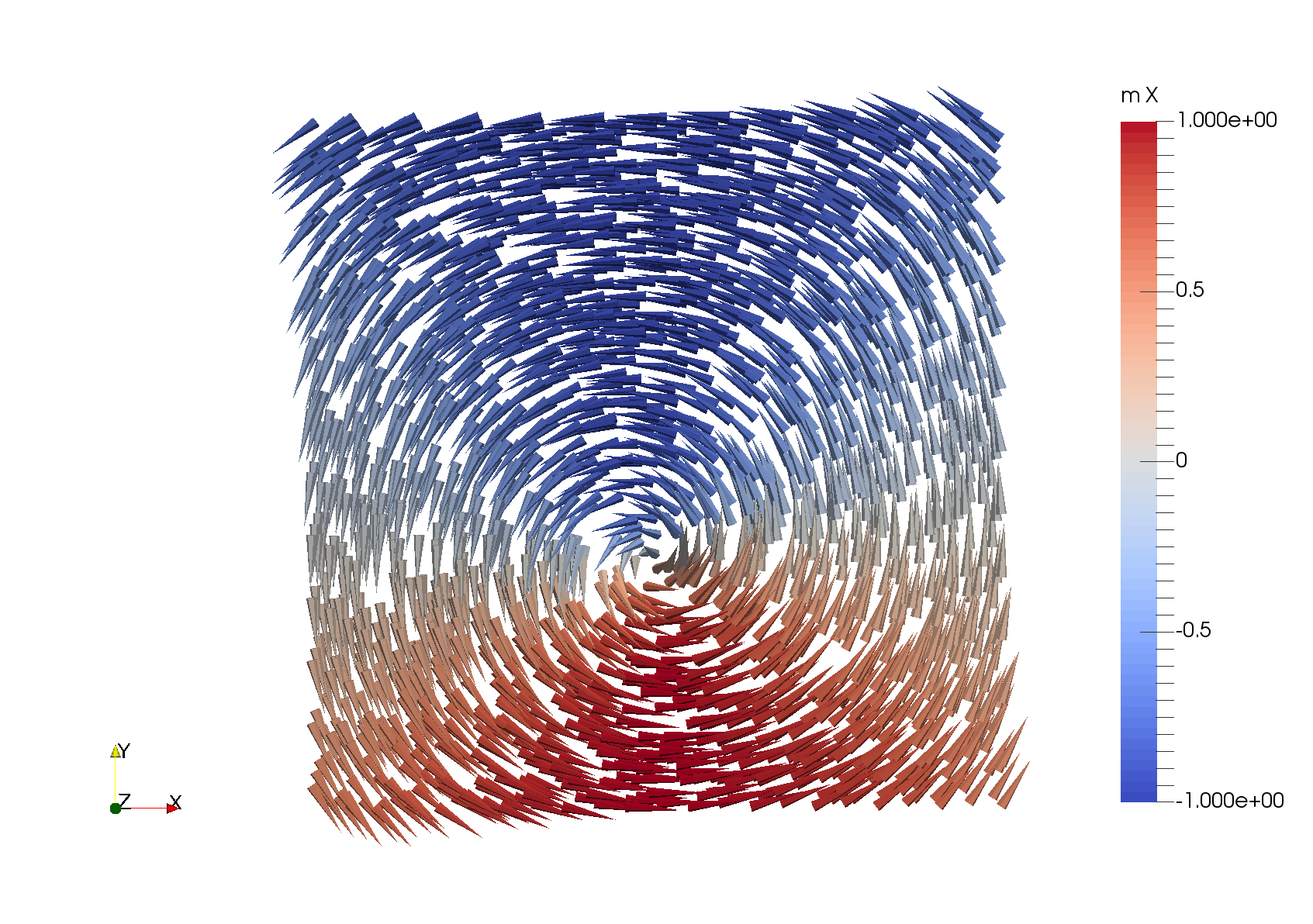}
\caption{Final vortex state.}
\label{subfig:mumag5_end}
\end{subfigure}
\caption{Experiment of Section~\ref{subsubsection:zhangli}: Vortex states in $\mu$MAG standard problem \#5.
The color scale refers to the $x$-component of the magnetization.}
\end{figure}
\begin{figure}[ht]
\centering
\begin{tikzpicture}
\pgfplotstableread{plots/sp5-oommf.dat}{\oommf}
\pgfplotstableread{plots/sp5-tps2ab.dat}{\tpsab}
\pgfplotstableread{plots/sp5-mpsab.dat}{\mpsab}
\begin{axis}[
width = 100mm,
xlabel={Time [\si{\nano\second}]},
xmin=0,
xmax=8,
ymin=-0.4,
ymax=0.4,
legend columns=2,
legend style={/tikz/column 2/.style={column sep=5pt}},
]
\addplot[blue,ultra thick] table[x=t, y=mx]{\tpsab};
\addplot[cyan,ultra thick] table[x=t, y=my]{\tpsab};
\addplot[purple,dashed,ultra thick] table[x=t, y=mx]{\oommf};
\addplot[red,dashed,ultra thick] table[x=t, y=my]{\oommf};
\addplot[brown,densely dotted,ultra thick] table[x=t, y=mx]{\mpsab};
\addplot[olive,densely dotted,ultra thick] table[x=t, y=my]{\mpsab};
\legend{
\texttt{TPS2+AB} $\langle m_x \rangle$,
\texttt{TPS2+AB} $\langle m_y \rangle$,
\texttt{OOMMF} $\langle m_x \rangle$,
\texttt{OOMMF} $\langle m_y \rangle$,
\texttt{MPS} $\langle m_x \rangle$,
\texttt{MPS} $\langle m_y \rangle$
}
\end{axis}
\end{tikzpicture}
\caption{Experiment of Section~\ref{subsubsection:zhangli}: Comparison of the results obtained for $\mu$MAG standard problem \#5 with \texttt{TPS2+AB}, \texttt{OOMMF}, and \texttt{MPS}.}
\label{example3:fig:average_xy}
\end{figure}
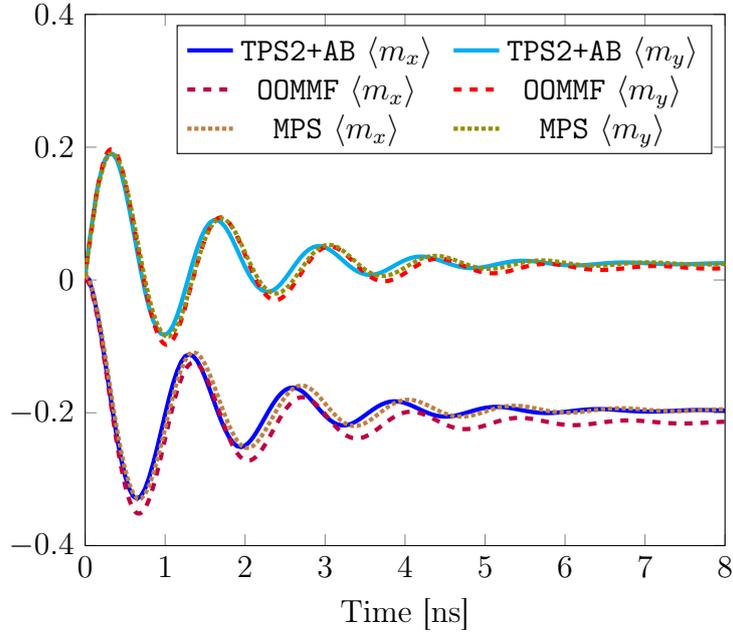

The extended LLG forms of~\cite{zl2004,tnms2005} are the subject of the $\mu$MAG standard problem \#5, proposed by the Micromagnetic Modeling Activity Group of the National Institute of Standards and Technology (NIST)~\cite{MUMAG}.
The sample under consideration is a permalloy film with dimensions \SI[scientific-notation=false]{100}{\nano\meter} $\times$ \SI[scientific-notation=false]{100}{\nano\meter} $\times$ \SI[scientific-notation=false]{10}{\nano\meter}, aligned with the $x$, $y$, and $z$ axes of a Cartesian coordinate system, with origin at the center of the film.
For the material parameters, we consider the same values as in Section~\ref{subsubsec:slonczewski}, except for the magnetocrystalline anisotropy, which is neglected, i.e., $K=0$.
The initial state is obtained by solving~\eqref{eq:llg} for the initial condition $\mm^0(x,y,z) = (-y,x,10) / \sqrt{x^2 + y^2 + 100}$ and $\Ppi \equiv \0$ for a sufficiently long time, until the system relaxes to equilibrium; see Figure~\ref{subfig:mumag5_initial}.
Given the spin velocity vector $\uu_T = (-72.17,0,0)$ \si{\meter\per\second} and the gyromagnetic ratio $\gamma_0 =$ \SI{2.21e5}{\meter\per\ampere\per\second}, we define $\Ppi$ by~\eqref{eq:zhangli:Pi} for $\uu=-\uu_T/(\gamma_0 M_s)$ and $\beta = 0.05$.
With the 
relaxed magnetization configuration as initial condition, we solve~\eqref{eq:llg} for \SI{8}{\nano\second}, which turns out to be a sufficiently long time to reach the new equilibrium; see Figure~\ref{subfig:mumag5_end}.

For the simulation, we 
consider a tetrahedral triangulation of the domain into \num{25 666} elements with maximal diameter \SI{3}{\nano\meter} (\num{5 915} nodes) generated by Netgen.
For the time discretization, we consider a constant time-step size of~\SI{0.1}{\pico\second} (\num{80 000} \emph{uniform} time-steps).

We compare our results with those obtained with the finite difference code \texttt{OOMMF}~\cite{dp1999} and with the implicit-explicit midpoint scheme of~\cite{prs2016} (\texttt{MPS}).
For~\texttt{OOMMF}, we consider the data downloadable from the $\mu$MAG homepage~\cite{MUMAG}, which refer to a uniform partition of the computational domain into \num{12 500} cubes with \SI{2}{\nano\meter} edge and \num{42350} \emph{adaptive} time-steps. In~\texttt{OOMMF}, the solution of~\eqref{eq:magnetostatic} for the stray field computation is based on a fast Fourier transform algorithm.
The results for~\texttt{MPS} refer to the same triangulation used for \texttt{TPS2+AB}, but are obtained with a \num{20} times smaller time-step size (\SI{5}{\femto\second}, i.e., \num{1600000} time-steps), which is necessary to ensure the well-posedness of the fixpoint iteration which solves the nonlinear system~\cite{prs2016}.

Figure~\ref{example3:fig:average_xy} shows the evolution of the averages $\langle m_x\rangle$ and $\langle m_y\rangle$ of the $x$- and $y$-component of $\mm$, respectively.
Despite the different nature of the methods, the results are in full qualitative agreement.


\subsection{Empirical convergence rates for ELLG}\label{example4}
We aim to illustrate the accuracy and the computational effort of Algorithm~\ref{alg:abtps_ellg}. We neglect $\mm$-dependent lower-order terms and dissipative effects, i.e., $\ppi_h^i(\vv_h^i;\mm_h^i,\mm_h^{i-1}) = \0 = \Ppi_h^i(\vv_h^i;\mm_h^i,\mm_h^{i-1})$, and compare the following four strategies:
\begin{itemize}
\item {\tt FC}: The fully-coupled approach $\hh_h^{i,\Ttheta} = \midh{\hh}{i}$ from~\eqref{eq:elle_ellg_choice1} for all $i = 0,\dots, N-1$;
\item {\tt DC-2}:~The decoupled second-order Adams--Bashforth approach from Remark~\ref{remark:adellg}{\rm (ii)};
\item {\tt DC-1}:~The decoupled 
explicit Euler approach $\hh_h^{i,\Ttheta} = \hh_h^i$ for all $i = 0,\dots, N-1$;
\item {\tt SF}: The simplified decoupled Adams--Bashforth approach from Remark~\ref{remark:adellg}{\rm (iv)}.
\end{itemize}
We always employ the canonical choices~\eqref{eq:rhoM} for $M$ and $\rho$. For the first time-step, we always use {\tt FC} to preserve a possible second-order convergence in time. For all implicit time-steps, the system~\eqref{eq:abtps_ellg} of Algorithm~\ref{alg:abtps_ellg} is solved by the fixpoint iteration from the proof of Theorem~\ref{thm:maintheorem_ellg}{\rm(i)} which is stopped if 
\begin{align*}
\norm{\eeta_h^\ell - \eeta_h^{\ell-1}}{\omega} +
\norm{\nnu_h^{\ell}-\nnu_h^{\ell-1}}{\Omega}
\le 10^{-10}.
\end{align*}
To test the schemes, we use a setting similar to that of Section~\ref{example1}: We choose $\omega = (-0.125,0.125)^3$, $\Omega = (0,1)^3$, and $T=7$. For the LLG-part~\eqref{eq:ELLG1}, we choose $\alpha = 1$, $\Cex = 1$, and set $\ff = (f_1,0,0)^T \in \boldsymbol{C}^1([0,T])$, where
\begin{align*}
f_1(t) :=
\begin{cases}
15 t^2 & \textrm{for } 0 \leq t \leq 1, \\
30 - 15(t - 2)^2 & \textrm{for } 1 < t \leq 2, \\
30 & \textrm{for } 2 < t \leq 4, \\
30 - 15(t - 4)^2 & \textrm{for } 4 < t \leq 5, \\
15 (t-6)^2 & \textrm{for } 5 < t \leq 6, \\
0 & \textrm{for } 6 < t \leq 7.
\end{cases}
\end{align*}
For the eddy current part~\eqref{eq:ELLG2}, we choose $\mu_0 = 1$, and $\sigma \in \L{\infty}{\Omega}$ with $\sigma|_{\omega} = 100$ and $\sigma|_{\Omega \setminus \overline{\omega}} = 1$.

We use a fixed triangulation $\Trian$ of $\Omega$ generated by Netgen, which resolves $\omega$ and satisfies
\begin{itemize}
\item $\max_{K \in \Trian|_{\omega}} h_K = 0.03$ on the sub-mesh $\Trian|_{\omega}$,
\item $\max_{K \in \Trian|_{\Omega \setminus \omega}} h_K = 0.125$ on the outer mesh $\Trian \setminus \Trian|_{\omega}$.
\end{itemize}
This yields \num{2 388} elements and \num{665} nodes for $\Trian|_{\omega}$ as well as \num{22 381} elements and \num{4 383} nodes for the overall mesh $\Trian$.

In all cases, the initial values $\mm_h^0$ and $\hh_h^0$ are obtained with setting $\ff = 0$ and relaxing with {\tt FC} the nodal interpolation of
\begin{align*}
\mm^0 = (-1,-1,-1) / \sqrt{3} 
\qquad 
\textrm{and} 
\qquad 
\hh^0(\xx) = 
\begin{cases} 
- \mm^0(\xx) & \textrm{for } \xx \in \overline{\omega}, \\
0 & \textrm{for } \xx \in  \overline{\Omega} \setminus \overline{\omega}.
\end{cases}
\end{align*}
The exact solution is unknown. To compute the empirical convergence rates, we consider a reference solution $\mm_{hk_{\rm ref}}$ computed with {\tt DC-2}, using the above mesh and the time-step size $k_{\textrm{ref}} = 2^{-14}$.

Figure~\ref{example1:fig:convergence_order_ellg} visualizes the experimental convergence orders of the four integrators. As expected, the fully-coupled approach {\tt FC} and the decoupled approach {\tt DC-2} lead to second-order convergence in time. As mentioned in Remark~\ref{remark:adellg}{\rm (iv)}, the simplified and fully linear approach {\tt SF} as well as {\tt DC-1} only lead to first-order convergence in time. Moreover, from the second time-step on the decoupled approach {\tt DC-2} is computationally as expensive as {\tt DC-1}. In contrast to that, the fully-coupled approach {\tt FC} employs a fixed-point iteration at each time-step and thus comes at the highest computational cost.

Overall, the decoupled approach {\tt DC-2} appears to be the method of choice with respect to both, computational time and empirical accuracy.

\begin{figure}[ht]
\centering
\begin{subfigure}{0.48\textwidth}
\scalebox{0.7}{
\begin{tikzpicture}
\pgfplotstableread{plots/errorsELLG_H1m_abRef.dat}{\data}
\begin{loglogaxis}[
xlabel={Time-step size ($k$)},
ylabel={Error},
width = 110mm,
legend style={
legend pos= south west},
x dir=reverse
]
\addplot[only marks, blue, mark=x, mark size=5,ultra thick] table[x=k, y=FC] {\data};
\addplot[only marks, red, mark=ball, ultra thick] table[x=k, y=DC-2] {\data};
\addplot[only marks, green, mark=triangle*, mark size=3,ultra thick] table[x=k, y=DC-1] {\data};
\addplot[only marks, orange, mark=square*, ultra thick] table[x=k, y=SF] {\data};
\addplot[purple,ultra thick] table[x=k, y expr={(\thisrow{k})/6}]{\data};
\addplot[black,ultra thick] table[x=k, y expr={(\thisrow{k}*\thisrow{k}/1.5)}]{\data};
\node at (axis cs:1e-4,1.5e-5) [anchor=north east] {$\mathcal{O}(k)$};
\node at (axis cs:1e-4,3e-8) [anchor=east] {$\mathcal{O}(k^2)$};
\legend{{\tt FC},{\tt DC-2},{\tt DC-1},{\tt SF}} %
\end{loglogaxis}
\end{tikzpicture}
}
\end{subfigure}
\begin{subfigure}{0.48\textwidth}
\scalebox{0.7}{
\begin{tikzpicture}
\pgfplotstableread{plots/errorsELLG_HCurlh_abRef.dat}{\data}
\begin{loglogaxis}[
xlabel={Time-step size ($k$)},
ylabel={Error},
width = 110mm,
legend style={
legend pos= south west},
x dir=reverse
]
\addplot[only marks, blue, mark=x, mark size=5,ultra thick] table[x=k, y=FC] {\data};
\addplot[only marks, red, mark=ball, ultra thick] table[x=k, y=DC-2] {\data};
\addplot[only marks, green, mark=triangle*, mark size=3,ultra thick] table[x=k, y=DC-1] {\data};
\addplot[only marks, orange, mark=square*, ultra thick] table[x=k, y=SF] {\data};
\addplot[purple,ultra thick] table[x=k, y expr={(\thisrow{k})*2.5}]{\data};
\addplot[black,ultra thick] table[x=k, y expr={(\thisrow{k}*\thisrow{k}*8)}]{\data};
\node at (axis cs:1e-4,2.6e-4) [anchor=north east] {$\mathcal{O}(k)$};
\node at (axis cs:1e-4,5e-7) [anchor=east] {$\mathcal{O}(k^2)$};
\legend{{\tt FC},{\tt DC-2},{\tt DC-1},{\tt SF}} %
\end{loglogaxis}
\end{tikzpicture}
}
\end{subfigure}
\caption{Experiment of Section~\ref{example4}: Reference error $\max_j ( \norm{\mm_{hk_{\rm ref}}(t_j) - \mm_{hk}(t_j)}{\HH{1}{\omega}})$ (left) and $\max_j ( \norm{\hh_{hk_{\rm ref}}(t_j) - \hh_{hk}(t_j)}{\Hcurl{\Omega}} )$ (right) for $k = 2^\ell \, k_{\rm ref}$ with $\ell  \in \{1,2,3,4,5\}$ and $k_{\rm ref} = 5\cdot10^{-5}$.}
\label{example1:fig:convergence_order_ellg}
\end{figure}


\bibliographystyle{alpha}
\bibliography{ref}




\end{document}